\numberwithin{equation}{section}
\theoremstyle{plain}
\theoremstyle{plain}
\newtheorem{theorem}{Theorem}
\newtheorem{lemma}{Lemma}
\newtheorem{proposition}{Proposition}
\newtheorem{corollary}{Corollary}
\theoremstyle{definition}
\newtheorem{definition}{Definition}
\theoremstyle{remark}
\newtheorem{example}{Example}
\def\condind{\perp\!\!\!\perp}
\def\ie{i.e.\ }
\def\eg{e.g.\ }
\def\mean#1{\mathbb{E}[#1]}
\def\eqae{=_{\mbox{\tiny a.e.}}}
\def\equae{=_{\mbox{\tiny a.e.}}}
\def\inclusion{\mathcal{J}}
\def\inclusionX{\inclusion_{\xspace}}
\def\wstar{weak$^{\ast}$ }
\def\symmdiff{\!\vartriangle\!}
\def\indI{\mbox{\tiny I}}
\def\indJ{\mbox{\tiny J}}
\def\indK{\mbox{\tiny K}}
\def\indJI{\mbox{\tiny J$\setminus$I}}
\def\indE{\mbox{\tiny E}}
\def\indD{\mbox{\tiny D}}
\def\indi{\mbox{\tiny{\{i\}}}}
\def\ind#1{\mbox{\tiny #1}}
\def\parts{\mathcal{H}}
\def\partsQ{\parts(\mathcal{Q})}
\def\abstspace{\Omega}
\def\xspace{\mathcal{X}}
\def\yspace{\mathcal{Y}}
\def\tspace{\mathcal{T}}
\def\xspaceI{\xspace_{\indI}}
\def\xspaceJ{\xspace_{\indJ}}
\def\xspaceD{\xspace_{\indD}}
\def\tspaceI{\tspace_{\indI}}
\def\tspaceJ{\tspace_{\indJ}}
\def\tspaceD{\tspace_{\indD}}
\def\txspace{\tilde{\xspace}}
\def\yspaceI{\yspace_{\indI}}
\def\yspaceJ{\yspace_{\indJ}}
\def\yspaceD{\yspace_{\indD}}
\def\yspaceE{\yspace_{\indE}}
\def\txspace{\tilde{\xspace}}
\def\ttspace{\tilde{\tspace}}
\def\xI{x_{\indI}}
\def\xJ{x_{\indJ}}
\def\xD{x_{\indD}}
\def\tImage{\Gamma}
\def\simp{\triangle}
\def\simpI{\simp_{\indI}}
\def\simpJ{\simp_{\indJ}}
\def\AI{A_{\indI}}
\def\pMeas{M}
\def\cfspace{C}
\def\borelV{\borel_{V}}
\def\borel{\mathcal{B}}
\def\top{\mbox{Top}}
\def\borelI{\borel_{\indI}}
\def\borelD{\borel_{\indD}}
\def\tborel{\tilde{\borel}}
\def\abstfield{\mathcal{A}}
\def\field{\mathcal{C}}
\def\fieldI{\field_{\indI}}
\def\fieldJ{\field_{\indJ}}
\def\fieldD{\field_{\indD}}
\def\Sfield{\mathcal{S}}
\def\SfieldI{\mathcal{S}_{\indI}}
\def\SfieldD{\mathcal{S}_{\indD}}
\def\borelx{\borel_x}
\def\borely{\borel_{y}}
\def\borelT{\borel_{\tspace}}
\def\borelS{\borel_s}
\def\topI{\top_{\indI}}
\def\topJ{\top_{\indJ}}
\def\topD{\top_{\indD}}
\def\XI{X_{\indI}}
\def\Xi{X_{\ind{i}}}
\def\ThetaI{\Theta_{\indI}}
\def\XJ{X_{\indJ}}
\def\ThetaJ{\Theta_{\indJ}}
\def\XD{X_{\indD}}
\def\ThetaD{\Theta_{\indD}}
\def\tX{\tilde{X}}
\def\tTheta{\tilde{\Theta}}
\def\SI{S_{\indI}}
\def\rest{\phi}
\def\projector{\mbox{pr}}
\def\projectorI{\projector_{\indI}}
\def\projectorJI{\projector_{\indJ\indI}}
\def\po{\preceq}
\def\famD#1{{\bigl< #1 \bigr>}_{\indD}}
\def\fJI{f_{\indJ\indI}}
\def\fKI{f_{\indK\indI}}
\def\fKJ{f_{\indK\indJ}}
\def\fII{f_{\indI\indI}}
\def\fI{f_{\indI}}
\def\fJ{f_{\indJ}}
\def\gJI{g_{\indJ\indI}}
\def\gI{g_{\indI}}
\def\hJI{h_{\indJ\indI}}
\def\hI{h_{\indI}}
\def\hJ{h_{\indJ}}
\def\plim{\varprojlim}
\def\abstmeasure{\mathbb{P}}
\def\P{P}
\def\PI{P_{\indI}}
\def\PJ{P_{\indJ}}
\def\PD{P_{\indD}}
\def\PX{P_{\mbox{X}}}
\def\PXI{P_{\XI}}
\def\tP{\tilde{P}}
\def\SI{S_{\indI}}
\def\SJ{S_{\indJ}}
\def\tk{\tilde{k}}
\def\kI{k_{\indI}}
\def\DP#1{\mbox{DP}\left( #1 \right)}
\def\tyspace{\tilde{\yspace}}
\def\tF{\tilde{F}}
\def\tmodel{\tilde{\model}}
\def\tnu{\tilde{\nu}}
\def\PTheta{P^{\theta}}
\def\borelY{\borel_{\yspace}}
\def\PX{P^{x}}
\def\PXI{\PX_{\indI}}
\def\PThetaI{\PTheta_{\indI}}
\def\PThetaJ{\PTheta_{\indJ}}
\def\PThetaD{\PTheta_{\indD}}
\def\YI{Y_{\indI}}
\def\YJ{Y_{\indJ}}
\def\YD{Y_{\indD}}
\def\Tn{T^{(n)}}
\def\indexspace{\mathcal{W}}
\def\tyspace{\tilde{\yspace}}
\def\tY{\tilde{Y}}
\def\inclusionT{\inclusion_{\tspace}}
\def\tPTheta{\tilde{P}^{\theta}}
\def\tTn{\tilde{T}^{(n)}}
\def\inclusionY{\inclusion_{\yspace}}
\def\tyspace{\tilde{\yspace}}
\def\tF{\tilde{F}}
\def\tmodel{\tilde{\model}}
\def\tnu{\tilde{\nu}}
\def\tOmega{\tilde{\abstspace}}
\def\tabstmeasure{\tilde{\abstmeasure}}
\def\model{\mathcal{P}}
\def\tf{\tilde{f}}
\def\tx{\tilde{x}}
\def\ty{\tilde{y}}
\def\Sspace{\mathcal{U}}
\def\Sfield{\mathcal{S}}
\def\SfieldI{\mathcal{S}_{\indI}}
\def\SfieldD{\mathcal{S}_{\indD}}
\def\borelS{\borel(\Sspace)}
\def\borelSI{\borel(\SspaceI)}
\def\SspaceI{\Sspace_{\indI}}
\def\tS{\tilde{S}}
\def\tA{\tilde{A}}
\def\tk{\tilde{k}}
\def\tC{\tilde{C}}
\def\hJI{h_{\indJ\indI}}
\def\SD{S_{\indD}}
\def\kI{k_{\indI}}
\def\kD{k_{\indD}}
\def\kernel{k}
\def\tauI{\tau_{\indI}}
\def\tauJ{\tau_{\indJ}}
\def\tauD{\tau_{\indD}}
\def\thetaI{\theta_{\indI}}
\def\thetaJ{\theta_{\indJ}}
\def\thetaD{\theta_{\indD}}
\def\kI{k_{\indI}}
\def\kJ{k_{\indJ}}
\def\kD{k_{\indD}}
\def\modelI{\model_{\indI}}
\def\SG#1{\mathbb{S}_{#1}}
\def\SGinf{\mathbb{S}_{\infty}}
\def\VP{\mathfrak{S}}
\def\fnn{f_{n+1,n}}
\def\gammaI{\gamma_{\indI}}
\def\gammaD{\gamma_{\indD}}
\def\chull{\mbox{conv}}
\def\TnI{\Tn_{\indI}}
\def\TnD{\Tn_{\indD}}
\def\kernelI{\kernel_{\indI}}
\def\kernelD{\kernel_{\indD}}
\def\yD{y_{\indD}}
\def\yI{y_{\indI}}
\def\phiI{\phi_{\indI}}
\begin{document}

\begin{frontmatter}

\title{Conjugate Projective Limits}
\runtitle{Conjugate Projective Limits}
\author{\fnms{Peter} \snm{Orbanz}\corref{}\ead[label=e1]{p.orbanz@eng.cam.ac.uk}}
\affiliation{University of Cambridge}
\runauthor{P.\ Orbanz}


\begin{abstract}
  \pdfoutput=1

We characterize conjugate nonparametric Bayesian models as
projective limits of conjugate, finite-dimensional Bayesian
models. In particular, we identify a large class of nonparametric
models representable as infinite-dimensional analogues
of exponential family distributions and their canonical
conjugate priors. This class contains most models studied
in the literature, including Dirichlet processes and 
Gaussian process regression models. 
To derive these results, we
introduce a representation of infinite-dimensional Bayesian
models by projective limits of regular conditional probabilities.
We show under which conditions the nonparametric model itself,
its sufficient statistics, and -- if they exist -- conjugate
updates of the posterior are projective limits of their respective
finite-dimensional counterparts.
The results are illustrated both by application to existing nonparametric
models and by construction of a model on infinite permutations.

\end{abstract}

\end{frontmatter}

\pdfoutput=1

\section{Introduction}

Nonparametric Bayesian statistics effectively
revolves around a small number of fundamental models, including
the Dirichlet process \cite{Ferguson:1973}, Gaussian process \cite{Zhao:2000,Rasmussen:Williams:2006},
beta process \cite{Hjort:1990} and gamma process \cite{Ferguson:1973}. 
All these models have conjugate posteriors
\cite{Walker:Damien:Laud:Smith:1999}.
Since most nonparametric Bayesian models are derived from such fundamental, conjugate models,
virtually all nonparametric Bayesian inference is based directly or indirectly on conjugacy.
The objective of this work is to study the shared properties of fundamental models and to 
characterize the class of models admitting conjugate posteriors.

By \emph{nonparametric Bayesian model}, we refer to a Bayesian
model on an infinite-dimensional parameter space
\citep{Hjort:et:al:2010,Ghosh:Ramamoorthi:2002,Walker:Damien:Laud:Smith:1999}.
We do not a priori distinguish between
discrete models (\eg Dirichlet processes)
and continuous models (\eg Gaussian process regression).
In addition to conjugacy, 
models such as the Gaussian and Dirichlet processes share
another property, the existence of marginals in the exponential family.
In the case of the Dirichlet process, there is a well-known
connection between the two properties:
Conjugacy of the nonparametric model can be derived
directly from the conjugacy of the marginal, finite-dimensional
Dirichlet priors \citep{Ghosal:2010}.
We will show in the following how the vague but intuitively appealing
link between conjugate posteriors and exponential family marginals
in general nonparametric Bayesian models can be made precise.
If an infinite-dimensional model is constructed from finite-dimensional
marginal distributions, conjugacy of the marginals proves sufficient
to guarantee a conjugate posterior of the nonparametric model.

The analysis of shared properties of models requires a shared representation,
which leads almost inevitably to projective limits,
\ie the representation of a stochastic process by its
finite-dimensional marginal distributions \cite{Bourbaki:2004}.
Most representations used in Bayesian nonparametrics are
adapted to specific models -- examples include
L\'{e}vy processes, stick-breaking constructions \citep{Sethuraman:1994},
transformed Poisson processes \citep{Ferguson:Klass:1972},
and normalized completely random measures \citep{James:Lijoi:Pruenster:2009}.
The advantages of such model-specific representations
are that they emphasize useful properties of the model
in question, as well as their simplicity -- more general
representations tend to come at the price
of more technical subtleties involved in their application.
Possible choices for more general representations of probability
measures are densities, characteristic functions and projective
limits. Densities are not applicable for nonparametric
Bayesian models, both for lack of a suitable translation-invariant
carrier measure on infinite-dimensional space, and because
some important models (such as the Dirichlet process) 
are not dominated \citep{Schervish:1995}.
Characteristic functions 
are ill-suited for the questions considered here,
since they do not live on the actual sample space. 

A \emph{projective limit} (also called an \emph{inverse limit})
assembles an infinite-dimensional mathematical object
from a family of finite-dimensional objects 
\citep{Bourbaki:1968,Bourbaki:1966,Bourbaki:2004}. 
Projective limits of probability measures,
\ie Kolmogorov's extension theorem and its generalizations, are
widely used in the construction of stochastic processes:
A stochastic process with paths in an infinite-dimensional space is
represented in terms of its finite-dimensional marginals
\citep{Kallenberg:2001}. Since a projective limit representation
is not sufficient to specify some important properties
of sample paths, such as continuity of random functions
or $\sigma$-additivity of random measures, we combine
projective limits with the notion of a
\emph{pullback} under a suitable
transformation mapping \citep{Fremlin:MT}.
The pullback accounts for those almost sure properties of 
paths not expressible in terms of the projective limit.

Projective limits can be defined not only for measures, but
also for sets, functions, and a wide variety of mathematical
structures
\citep{Bourbaki:1968,Bourbaki:1966,Bourbaki:2004,MacLane:1998}.
This allows us to both define projective limits of conditional
probabilities, and to apply the representation to sufficient
statistics and other functions associated with a model.
In this manner, we obtain a representation of a nonparametric
Bayesian model in terms of a 
family of finite-dimensional ``marginal'' Bayesian models.
The properties of the nonparametric model can be related directly
to those of the parametric marginals. Application to
the questions of sufficiency and conjugacy shows
that both the sufficient statistics
and the posterior updates of a nonparametric Bayesian model
can be expressed in terms of their finite-dimensional counterparts.
This result in particular establishes a large family
of models -- containing both the Gaussian and
the Dirichlet process -- which can be regarded as a nonparametric
analogue of the exponential family, in a sense to be made precise
in the ensuing discussion.

The results imply an approach to the construction from scratch 
of nonparametric Bayesian models on a wide range of domains. In this regard, an additional appeal
of projective limits is the large number of such representations available in the
mathematical literature, each of which may potentially be harvested for the purposes of Bayesian
nonparametrics. Examples include the projective
limit/pullback construction of continuous functions 
used in the construction of the Gaussian process \citep[e.g.][]{Bauer:1996};
a variety of constructions of topological and algebraic objects discussed
by \citet{Bourbaki:1966,Bourbaki:1968,Bourbaki:2004}; the construction
of random coagulation and fragmentation processes \cite{Bertoin:2006};
and recent constructions
of infinite limits of permutations by \citet{Kerov:Olshanski:Vershik:2004}, and
of graph limits by \citet{Lovasz:Szegedy:2006}.

\subsection{Summary of Results}

\newcounter{counter:results}
\setcounter{counter:results}{0}
\def\resultcounter{\renewcommand{\labelenumi}{(\arabic{counter:results})}
  \addtocounter{counter:results}{+1}}

Since projective limits are, by themselves, not capable of
expressing all properties of stochastic processes such
as the Dirichlet and Gaussian process, additional
steps are required to obtain an applicable distribution.
These steps and their formalization in the literature
differ widely between models.
Since our problem requires a unified formalism, we
derive a representation in terms of a pullback
of the projective limit under a measurable embedding.
Intuitively, the stochastic process of interest is represented
by uniquely encoding each of its paths as a path
of the projective limit process. The resulting 
representation is applicable to
all important nonparametric Bayesian models.

Projective limits and pullbacks preserve a variety of
properties of functions and set functions. For example, projective
limits and pullbacks obtained from injective functions are again
injective functions. The same holds for continuous and measurable mappings,
bijections, probability measures and regular conditional
probabilities. Some of these facts are standard results,
others are established in the following. In particular,
we show:
\begin{enumerate}
\resultcounter
\item The countable projective limit of a projective family
  of probability kernels (regular conditional probabilities) on
  finite-dimensional spaces is a probability kernel on an
  infinite-dimensional space. 
  The extension theorems of Kolmogorov and of Prokhorov 
  can both be generalized along these lines 
  (Theorem \ref{theorem:projlim:conditionals}; Corollary \ref{corollary:prokhorov:extension}).
  Similarly, the pullback of a probability kernel is again a probability kernel
  (Proposition \ref{lemma:pullback:conditionals}).
\end{enumerate}
A Bayesian model is defined by conditional probabilities. 
By application of the previous results to these 
conditionals, we obtain:
\begin{enumerate}
\resultcounter
\item A projective limit can be applied directly to finite-dimensional
  Bayesian models, resulting in infinite-dimensional Bayesian models on the
  corresponding projective limit spaces
  (Sec.~\ref{sec:bayesian:projlim}). Pullbacks also preserve the structure
  of the Bayesian model (Sec.~\ref{sec:bayesian:pullback}). 
  Both operations commute with the computation of posteriors 
  (Diagram \eqref{diagram:posterior:commutes}).
\end{enumerate}  
In other words, nonparametric Bayesian models can
be directly constructed from finite-dimensional ``marginal'' Bayesian models.
The construction is analogous to the construction of stochastic process measures by means
of projective limits and pullbacks.

Since projective limits and pullbacks are applicable to measurable functions,
they apply simultaneously to a model and its associated statistics.
\begin{enumerate}
\resultcounter
\item The projective limit of the sufficient statistics (resp. sufficient $\sigma$-algebras)
  of the marginal models is a sufficient statistic (resp. sufficient $\sigma$-algebra)
  of the infinite-dimensional projective limit model (Sec. \ref{sec:sufficiency}).
  We also show that, if the sufficient $\sigma$-algebras of the marginals are
  minimal, the projective limit $\sigma$-algebra is again minimal sufficient. This
  holds even if the projective limit model is undominated (Proposition
 \ref{theorem:suffstat:minimal}).
\end{enumerate}
The practical utility of conjugate Bayesian models is due to the representability of their
posterior parameters as functions of the data and the model hyperparameters. We show
that the structure and functional form of this update process carries over from
the marginals to the nonparametric model.
\begin{enumerate}
\resultcounter
\item Projective limits and pullbacks of conjugate Bayesian models are conjugate, and
  in particular, the mapping to the posterior parameter of the infinite-dimensional
  model is the projective limit of the update mappings of the marginal models
  (Sec.~\ref{sec:conjugacy}).
  For the specific case in which the finite-dimensional marginals are conjugate
  exponential family models, we obtain a nonparametric analogue of
  the Diaconis-Ylvisaker representation \citep{Diaconis:Ylvisaker:1979} of
  conjugate parametric models (Corollary \ref{corollary:expfam}).
\end{enumerate}
The results are illustrated by application to three concrete examples:
Gaussian processes (Examples \ref{example:GP:C} and 
\ref{example:GP:regression}), Dirichlet processes
(Example \ref{example:DP} and Sec.~\ref{sec:examples:DP}), 
and a Bayesian model
on infinite permutations (Sec.~\ref{sec:examples:cayley}).

\subsection{Related Work}

The application of projective limits to statistical models
was pioneered by \citet{Lauritzen:1984,Lauritzen:1988},
to derive a family of parametric models which are
defined by 
sequences (rather than averages) of sufficient statistics
and generalize beyond exchangeable observations.
In Lauritzen's work, the ``dimensions'' of the projective limit
describe repeated observations from a parametric model, rather
than dimensions of sample and parameter space as in our case.
Nonetheless, if $n$ observations in Lauritzen's ``projective
statistical fields'' \citep[][Chapter IV]{Lauritzen:1988} are
interpreted as a sample of size $n$ in a Bayesian nonparametric
model, the projective limit aspects of Sec.~\ref{sec:conditionals}
below can be regarded as an analogue of Lauritzen's projective
fields for application to nonparametric Bayesian models.

Conjugate analysis in the finite-dimensional, parametric case,
\ie for dominated models, is the subject of a substantial
literature \citep[e.g.][]{Diaconis:Ylvisaker:1979,Dalal:Hall:1983,
Consonni:Veronese:2003}.
\citet{Bernardo:Smith:1993} give a concise overview.
It is also well known that almost all nonparametric Bayesian
models are conjugate \cite{Walker:Damien:Laud:Smith:1999};
if the model is undominated, Bayes' theorem
is not applicable, and conjugacy is often the only
way to represent the posterior. Other models indirectly
rely on conjugacy: The popular Dirichlet
process mixture model \citep[Example 4]{Antoniak:1974}
does not have a conjugate posterior, but is amenable
to Gibbs sampling only because the Dirichlet process law of
the mixing measure is conjugate. However, conjugacy of
nonparametric Bayesian models has
not so far been analyzed as a structural property,
with one notable exception:
In the special case of sequential independent increment processes,
for which a class of models with exponential family marginals 
is discussed in detail
by \citet{Kuechler:Sorensen:1997}, the
existence of conjugate posteriors is studied by
\citet{Magiera:Wilczynski:1991}.
\citet{Thibeaux:Jordan:2007} draw on a similar insight
and invoke a conjugacy argument to relate
the Indian buffet process model of
\citet{Griffiths:Ghahramani:2006} to the 
beta process of \citet{Hjort:1990}.

\subsection{Outline}

We develop a representation of stochastic processes
suitable for our purposes in Sec.~\ref{sec:stochproc}. 
Projective limits and pullbacks are then applied to 
conditional probabilities in Sec.~\ref{sec:conditionals},
which facilitates their application to Bayesian models
in Sec.~\ref{sec:bayesian}. From the representation
of nonparametric Bayesian models so obtained, we derive
results on their sufficient statistics in 
Sec.~\ref{sec:sufficiency}, and on 
conjugate posteriors in Sec.~\ref{sec:conjugacy}.
Two detailed examples in Sec.~\ref{sec:examples}
illustrate the approach and results.
Since projective limits of functions and pullbacks
of measures are not commonly used in statistics, a brief
summary of relevant facts is provided in Appendix \ref{sec:background}.

\subsection{Notation and Assumptions}

All random variables are in the following assumed to share
an abstract probability
space $(\abstspace,\abstfield,\abstmeasure)$ as common domain.
We will frequently have to distinguish spaces of different
dimensions, which are indexed by subscripts as $\xspaceI$, $\tspaceJ$, etc.
All mappings, $\sigma$-fields and other quantities on these
spaces are indexed accordingly. We use superscripts $\xI^{(j)}$ to
denote elements of sequences or repetitive observations.
For any measure $\nu$, a superscript $\nu^{\ast}$ indicates the
corresponding outer measure.
Observations are generally assumed exchangeable.
Topological spaces are assumed to be Polish spaces, \ie
complete, separable and metrizable spaces, unless 
expressly stated otherwise. 
We refer to a measurable space as \emph{standard Borel}
if it is the Borel space generated by a Polish topology.
As the underlying spaces are Polish, all
conditional probabilities $P[X|\field]$ are assumed
to be regular conditional probabilities (probability kernels).

\pdfoutput=1

\section{Construction of Stochastic Processes}
\label{sec:stochproc}

We will briefly survey the 
construction of stochastic processes and introduce
some relevant definitions. The presentation assumes familiarity with
the terminology of projective limits, which is used here
in the sense of \citet{Bourbaki:1966,Bourbaki:1968,Bourbaki:2004}.
A more detailed summary of projective limits and pullbacks
is given in Appendix \ref{sec:background}.

\subsection{Projective Limit Notation}

Let $(D,\po)$ be a partially ordered, directed set.
\emph{We assume $D$ to be countable throughout.}
Let $\left< \xspaceI,\borelI,\fJI\right>_{\indI\po\indJ\in\indD}$,
or $\famD{\xspaceI,\borelI,\fJI}$ for short,
be a \emph{projective system} of topological measurable spaces
indexed by $D$. That is, $\xspaceI$ are topological spaces, $\borelI$ their
Borel $\sigma$-algebras, and $\fJI:\xspaceJ\rightarrow\xspaceI$
are continuous generalized projections;
the mappings are called generalized projections if they
satisfy
\begin{equation}
    \label{eq:def:canonical:map}
    \fII = \mbox{Id}_{\xspaceI} \quad\text{ and }\quad
    \fKI = \fKJ\circ\fKI \qquad\text{ whenever } I\po J\po K \;.
\end{equation}
Denote by $\xspaceD$ the projective limit space.
The mappings $\fJI$
induce a family of unique generalized projection
mappings $\fI:\xspaceD\rightarrow\xspaceI$.
The space $\xspaceD$ is endowed with the smallest topology
$\topD$ which makes all $\fI$ continuous. $\topD$
is called the \emph{projective limit topology}, and generates
the projective limit Borel $\sigma$-algebra $\borelD$.
A family $\famD{\PI}$ of probability measures on the spaces
$\xspaceI$ is called \emph{projective} if $\fJI(\PJ)=\PI$ whenever
$I\po J$. By the extension theorem of Kolmogorov and Bochner 
(App.~\ref{sec:background}, Theorem \ref{theorem:bochner}), any projective family
defines a unique probability measure
$\PD$ on $(\xspaceD,\borelD)$ which
satisfies $\PI=\fI(\PD)$ for all $I\in D$. 
We refer to this measure, also denoted $\PD=\plim\famD{\PI}$, 
as the \emph{projective
limit} of $\famD{\PI}$, and to the measures $\PI$ as the
\emph{marginals} of $\PD$. Intuitively, the measures $\PI$
are probability distributions on finite-dimensional spaces,
and $\PD$ is a joint distribution of a stochastic process
$\famD{\XI}$ on the infinite-dimensional space $\xspaceD$.

The projective limit space $\xspaceD$ is a 
subset of the product space $\prod_{I\in D}\xspaceI$. If
$\projectorI$ denotes the canonical projection onto $\xspaceI$
in the product space, the canonical mappings $\fI$ are the
restrictions $\fI=\projectorI\vert_{\xspaceD}$.
It is often useful to regard the elements $\xD$ of $\xspaceD$
as functions $\xD:D\rightarrow\cup_{I\in D}\xspaceI$, or
more precisely, as functions on $D$ taking values $x(I)\in\xspaceI$.
In the context of nonparametric Bayesian estimation, the
indices $I\in D$ may be thought of as covariates or sets of
covariates and the
function values $\xI=x(I)$ as measurements, if $\xspaceD$ represents
the observation space of the model. If $\xspaceD$ is a parameter
space, continuous real-valued functions $\xD$ may represent
regressors, set functions $\xD$ may represent density estimates, etc.

\subsection{Stochastic Processes}
\label{sec:stochproc:stochproc}

A stochastic process is in general a collection
$\famD{\XI}$ of random variables, indexed by an infinite
set $D$. Hence, if $\PD=\plim\famD{\PI}$ is a projective limit
measure with marginals $\PI$, the family $\famD{\XI}$ 
of random variables distributed according to the measures $\PI$
is a stochastic process indexed by $D$.
Conversely, any stochastic process
can in principle be regarded as the projective limit of
its marginals on suitably chosen subspaces. However,
constructions of stochastic processes as projective limits
have to address two fundamental technical problems:
\begin{enumerate}
\renewcommand{\labelenumi}{(\alph{enumi})}
\item {\emph{Uncountable index sets}.}
  An event $A\subset \xspaceD$ is measurable
  under $\PD$ only if it depends on an at most
  countable subset $D'\subset D$ of coordinates
  \citep[e.g.][Theorem 36.3]{Billingsley:1995}. In other words,
  unless $D$ is countable, singletons are not
  measurable in the projective limit space, and the
  projective limit measure $\PD$ is not useful
  for most applications.
\item {\emph{Infinitary properties of sample paths}.}
  If the spaces $\xspaceI$ in the projective system
  are finite-dimensional, the
  projective limit construction can only express properties
  of the random functions $\xD$ that are \emph{finitary},
  such as non-negativity or monotonicity of real-valued
  functions, or finite additivity of set functions. 
\end{enumerate}
Problem (a) means, for example, that projective limits 
can directly define
a useful measure on functions $\mathbb{Q}\rightarrow\mathbb{R}$,
but not on functions $\mathbb{R}\rightarrow\mathbb{R}$,
since the space $\mathbb{R}^{\mathbb{R}}$ of all functions
$\mathbb{R}\rightarrow\mathbb{R}$ has uncountable dimension.
Problem (b) implies, for example, that a projective limit construction
of random set functions can define a sample space consisting
of all charges (finitely additive probabilities),
but not a sample space containing exactly all 
probability measures, which would require the
projective limit to express countable additivity. 

Both problems (a) and (b) can be jointly
addressed in an elegant manner by means of pullbacks
under suitable functions.
Given a space $\xspace$, a measure space $(\yspace,\borelY,\nu)$
and a function $\inclusion:\xspace\rightarrow\yspace$,
the \emph{pullback} of $\nu$ under $\inclusion$ is
the measure $\tnu$ on $(\xspace,\inclusion^{-1}\borelY)$
satisfying $\inclusion(\tnu)=\nu$. The pullback measure
$\nu$ is uniquely defined whenever the image 
$\inclusion(\xspace)\subset\yspace$ has full outer
measure under $\nu$, that is if
$\nu^{\ast}(\inclusion(\xspace))=\nu(\yspace)$
-- see App.~\ref{sec:pullbacks} for more details.
The most common example of a pullback is
the restriction of a measure to a (possibly non-measurable)
subspace, in which case $\xspace\subset\yspace$ is an
arbitrary subset and $\inclusion:\xspace\hookrightarrow\yspace$
the canonical inclusion map. The $\sigma$-algebra
$\inclusion^{-1}\borelY$ is then precisely the
subspace $\sigma$-algebra $\borelY\cap\xspace$.
Hence, if $\nu$ is a probability measure on $\yspace$,
and if the subspace has outer measure $\nu^{\ast}(\xspace)=1$,
the pullback $\tnu$ exists and is the restriction of $\nu$
to $(\xspace,\borelY\cap\xspace).$

To construct stochastic processes, we will specifically
consider pullbacks under embedding maps. Let
$\phi:\txspace\rightarrow\xspace$ be a mapping
between topological spaces. Such a mapping is
called an \emph{embedding} if, regarded as a 
mapping onto its image, it is a homeomorphism. 
Analogously, we refer
to $\phi$ as a \emph{Borel embedding} if it constitutes
a Borel isomorphism of its domain and its image
$(\tImage,\borel(\xspace)\cap\tImage)$.
A definition of a stochastic process suitable for our
questions in Bayesian nonparametrics is the following:
\begin{definition}
  \label{def:cr:stoch:proc}
  Let $(\txspace,\borel(\txspace),\tP)$ be a topological measure
  space and $\famD{\xspaceI,\borelI,\PI}$ a projective system of
  standard Borel spaces with countable, directed index set $D$. 
  Then $\tP$ is called a \emph{countably representable 
    stochastic process} if it is the pullback of
  the projective limit measure $\PD:=\plim\famD{\PI}$
  under a Borel embedding
  $\rest:\txspace\rightarrow\tImage\subset\xspaceD$.
\end{definition}
To be asymptotically identifiable, a model can have at most
a countable number of degrees of freedom, which motivates
the restriction to sample paths of countable complexity
implicit in Definition \ref{def:cr:stoch:proc}:
The indices $I\in D$ of a projective limit
can be thought of as dimensions or
degrees of freedom. Hence, the sample space 
$\txspace$ of a stochastic
process with countably many degrees of freedom can be embedded
into a suitably chosen projective limit space $\xspaceD$
with countable index set.

The special case in which $\nu$
is a projective
limit measure on an \emph{uncountable} product space
$\yspace:=\xspaceD$, constructed from Euclidean spaces
$\xspaceI=\mathbb{R}^{\indI}$, and $\xspace$ is \eg the
subset of continuous functions, is known in stochastic
process theory as ``Doob's separability theorem''.
In this case, the pullback $\tnu$ is called
a ``separable modification'' of $\nu$ 
\citep{Doob:1953}. The index set $D$ is the set
of all finite subsets of the ``separant'', a
dense countable subset of $\mathbb{R}_+$. See also
\citep[][Chapter 38]{Billingsley:1995}.

The intuition that sample paths of $\tP$ (the elements
of $\txspace$) are uniquely represented by their
embeddings into $\xspaceD$
can be helpful in establishing that a given mapping $\phi$
is indeed a Borel embedding:
Suppose that a measurable map $\rest$ is given. As a 
mapping onto its image, it is trivially surjective, so
what remains to be established for Borel isomorphy 
is the existence of a measurable inverse.
If the elements of $\txspace$ are uniquely represented
by their embeddings, then $\rest$
is injective. 
In most settings, the mapping $\rest$ can be directly
derived from a suitable representation result, such as
the representation of continuous functions by their
values on countable subsets as mentioned above, or
the representation of measures by their values on a 
generating algebra of sets (by Carath\'{e}odory's extension
theorem). If additionally both $\txspace$ and $\tImage$ are
standard Borel spaces, Borel isomorphy follows automatically,
since measurable bijections between standard Borel spaces
are bimeasurable \citep[][Theorem A1.3]{Kallenberg:2001}.

\begin{example}[Dirichlet process]
  \label{example:DP}
  Suppose that $\tP$ is a Dirichlet process $\DP{\alpha G_0}$ over
  a standard Borel  space $(V,\borelV)$. 
  The spaces $\xspaceI$
  can be chosen as finite-dimensional simplices $\simpI\subset\mathbb{R}^{\indI}$,
  indexed by measurable partitions $I=(A_1,\dots,A_{|I|})$ of
  the space $V$. The marginals $\PI(\XI)$ are Dirichlet
  distributions on the simplices.
  The projective limit is the space of all 
  charges defined on a 
  specific countable algebra $\mathcal{Q}\subset\borelV$ which generates $\borelV$.
  The space $\txspace$ is the space of all probability
  measures on $\borelV$, and its image $\tImage=\phi(\txspace)$
  is the set of probability measures on the subalgebra $\mathcal{Q}$.
  For a given measure $\tx$ on $\borelV$, the image $\phi(\tx)$
  is the restriction of $\tx$ to $\mathcal{Q}$. By the
  Carath\'{e}odory extension theorem,
  $\phi$ is injective. Whether $\PD$ admits a pullback under
  $\phi$ depends on the parametrization of the marginals:
  If $G_0$ is a charge on $\mathcal{Q}$, and each Dirichlet marginal
  has parameter $\alpha\cdot\fI(G_0)$ for some fixed $\alpha>0$, the
  Dirichlet distributions form a projective family. The projective
  limit satisfies $\PD^{\ast}(\tImage)=1$ if and only
  if $G_0$ is countably additive. Sec.~\ref{sec:examples:DP} revisits
  this example in detail.
\end{example}

\begin{example}[Gaussian Process]
  \label{example:GP:C}
  To obtain a Gaussian process measure
  on the set $\txspace:=\cfspace(\mathbb{R}_+,\mathbb{R})$ of
  continuous functions $\mathbb{R}_+\rightarrow\mathbb{R}$,
  a projective limit is constructed as follows:
  Choose $D$ as the set of all finite subsets $I$ of $\mathbb{Q}_+$,
  ordered by inclusion, and define
  $\xspaceI:=\prod_{i\in I}\mathbb{R}$.
  Let $\fJI:=\projectorJI$ be the coordinate projections in
  Euclidean space, and $\famD{\PI}$ a projective family of
  multivariate Gaussian distributions. The projective limit
  space is $\xspaceD=\mathbb{R}^{\mathbb{Q}_+}$, and the projective
  limit measure $\PD$ can be regarded as a discrete-time 
  Gaussian process indexed by $\mathbb{Q}_+$.
  We embed $\txspace$ into $\mathbb{Q}_+$ by means of the
  restriction map $\rest:\tx\mapsto\tx\vert_{\mathbb{Q}_+}$.
  The mapping $\rest$ is a Borel isomorphism as required
  in Definition \ref{def:cr:stoch:proc}: As a canonical inclusion map,
  $\rest$ is continuous and hence measurable. 
  Since the representation
  of $\tx$ by its restriction is unique, $\rest$ is injective.
  The $\sigma$-algebra $\rest^{-1}\borelD$ induced
  by $\rest$ on $\cfspace(\mathbb{R}_+,\mathbb{R})$
  coincides with the Borel $\sigma$-algebra generated by
  the topology of compact convergence \citep[][Section 454O]{Fremlin:MT}.
  Hence, $\txspace$ is standard Borel, and $\rest$
  bimeasurable. 
  The requirement $\PD^{\ast}(\txspace)=1$ for the existence
  of the pullback measure is \emph{not} generally satisfied
  for arbitrary Gaussian marginals $\PI$. It can, however,
  be related to the parameters of the marginals. A prototypical
  result is Kolmogorov's continuity theorem 
  \citep[][Theorem 39.3]{Bauer:1996}:
  If the expectation under $\PD$ satisfies
  $\mean{|X_i-X_j|^{\alpha}}\leq\gamma|i-j|^{\beta}$ for all $i,j\in\mathbb{Q}_+$
  and any fixed $\alpha,\beta,\gamma\in\mathbb{R}_{>0}$,
  then $\PD^{\ast}(\cfspace(\mathbb{R}_+,\mathbb{R}))=1$.
  An example to the contrary is obtained for marginals satisfying
  $\mbox{Cov}[X_i,X_j]=\delta_{ij}$. The resulting
  Gaussian white noise process is almost surely discontinuous,
  and hence $\PD^{\ast}(\txspace)\neq 1$.
\end{example}

\pdfoutput=1

\section{Projective Limits of Conditional Probabilities}
\label{sec:conditionals}

In this section, we apply the projective limit approach to
conditional probabilities. By means of Theorem 
\ref{theorem:projlim:conditionals} below,
a conditional probability on an infinite-dimensional space
can be assembled as a projective limit of
conditional probabilities on finite-dimensional spaces,
in a similar manner as a probability measure
can be specified as a projective limit by means of 
the Kolmogorov-Bochner extension theorem.

\subsection{Construction Results}

Let $\famD{\xspaceI,\borelI,\fJI}$ be a projective system of 
standard Borel spaces.
For each $I\in D$, let $\PI[\XI|\fieldI]$ be a regular conditional
probability on $(\xspaceI,\borelI)$. More precisely, 
$\XI:\abstspace\rightarrow\xspaceI$
is a random variable,
$\fieldI\subset\abstfield$ is a $\sigma$-subalgebra on the abstract
probability space $\abstspace$, and 
$\PI[\,.\,|\fieldI](\,.\,):\borelI\times\abstspace\rightarrow[0,1]$
is a probability kernel.

The projections $\fJI$ immediately generalize
from probability measures to conditional probabilities by means of
\begin{equation}
  (\fJI\PJ)[\XJ\in .\,|\fieldJ] := \PI[\XI\in\fJI^{-1}\,.\,|\fieldJ] \;.
\end{equation}
The projector acts only on the first argument of the probability kernel.
To generalize the notion of a projective family, the second argument
has to be taken into account as well:
Consider a parametric family $\PI[\XI|\ThetaI]$, \ie each $\fieldI$
is generated by a parameter random variable $\ThetaI$.
Typically, if $\ThetaJ$ parametrizes a
high-dimensional random variable $\XJ$ and 
$\ThetaI$ a lower-dimensional
variable $\XI$, we would assume the information contained in
$\ThetaI$ to be a subset of the information contained in $\ThetaJ$.
The concept can be expressed in very general terms by assuming that
the $\sigma$-algebras $\fieldI$ are ordered in accordance with the
index set, \ie $\fieldI\subset\fieldJ$ whenever $I\po J$.
In analogy to the index set, we refer to such an ordered 
family of $\sigma$-algebras as \emph{directed}.
\begin{definition}[Projective family of conditional probabilities]
\label{def:cond:proj:family}
Let $\famD{\fieldI}$ be a directed family of $\sigma$-algebras.
A family $\famD{\PI[\XI|\fieldI]}$ of probability kernels on the
the projective system $\famD{\xspaceI,\borelI,\fJI}$ is called
\emph{projective} if 
\begin{equation}
  \label{eq:def:cond:proj:family}
  (\fJI\PJ)[\,.\,|\fieldJ] \eqae \PI[\,.\,|\fieldI]\qquad\text{ whenever }
  I\po J\;.
\end{equation}
\end{definition}
Projectivity of conditionals is a stronger condition than projectivity
of measures: We have 
$\P(A)=\int_{\abstspace}\P[A|\field](\omega)d\abstmeasure(\omega)$
for any $\field\subset\abstfield$,
and hence
$\PJ[\fJI^{-1}\AI|\fieldJ]
\equae\PI[\AI|\fieldI](\omega)$
implies
$\PJ(\fJI^{-1}\AI)
=\PI(\AI)$.
Therefore, projective conditionals imply projective measures,
but the converse only holds under additional conditions
(cf Lemma \ref{lemma:cond:proj:criterion:1}). 
If the conditional distributions of random variables
$\XI$ are projective given one directed family of
$\sigma$-algebras, the same may be not true for another
family, so the conditional projector is effectively parametrized
by the family $\famD{\fieldI}$.



\begin{theorem}[Projective limits of conditional probabilities]
  \label{theorem:projlim:conditionals}
  Let $E$ be a countable directed set.
  Let $\famD{\PI[\XI|\fieldI]}$ be a projective family of
  probability kernels on a projective system
  $\famD{\xspaceI,\borelI,\fJI}$ of Polish measurable spaces.
  Then there exists a unique (up to
  equivalence) probability kernel, denoted $\PD[\,.\,|\fieldD]$,
  which satisfies
  \begin{equation}
    \label{eq:theorem:bochner:marginals}
    (\fI\PD)[\,.\,|\fieldD] \eqae \PI[\,.\,|\fieldI] \qquad\text{ for
      all } I\in D\;,
  \end{equation}
  and is measurable with respect to $\fieldD:=\sigma(\fieldI ; I\in D)$.
\end{theorem}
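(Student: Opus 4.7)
The plan is to construct the kernel pointwise in $\omega$ by applying the Kolmogorov--Bochner extension theorem for each fixed $\omega$, then verify that the resulting family of measures is jointly measurable in $\omega$ with respect to $\fieldD$. The Polish/standard-Borel assumption, plus countability of $D$, is what makes both steps go through.

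First I would handle the null-set bookkeeping. For each pair $I\po J$ the projectivity equation \eqref{eq:def:cond:proj:family} holds only off an exceptional set $N_{I,J}\in\abstfield$ with $\abstmeasure(N_{I,J})=0$. Because $D$ is countable, the pairs $(I,J)$ with $I\po J$ form a countable set, so $N:=\bigcup_{I\po J}N_{I,J}$ is still $\abstmeasure$-null. For every $\omega\notin N$ the family $\famD{\PI[\,\cdot\,|\fieldI](\omega)}$ is a genuine (everywhere) projective family of probability measures on the projective system $\famD{\xspaceI,\borelI,\fJI}$ of standard Borel spaces. By the Kolmogorov--Bochner theorem (Theorem \ref{theorem:bochner} in App.~\ref{sec:background}), there is a unique probability measure on $(\xspaceD,\borelD)$, which I call $\PD[\,\cdot\,|\fieldD](\omega)$, with $\fI\PD[\,\cdot\,|\fieldD](\omega)=\PI[\,\cdot\,|\fieldI](\omega)$ for every $I$. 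On $N$ the kernel can be set to any fixed reference probability measure on $\xspaceD$; this does not affect the a.e.\ statement \eqref{eq:theorem:bochner:marginals}.

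The main step, and the one I expect to be the principal obstacle, is showing that the map $\omega\mapsto \PD[A|\fieldD](\omega)$ is $\fieldD$-measurable for every $A\in\borelD$. This is a monotone-class (or Dynkin) argument. Consider the algebra $\cyl$ of cylinder sets $\{\fI^{-1}(B_I) : I\in D,\, B_I\in\borelI\}$. For such a cylinder, the marginal identity gives
\begin{equation}
\PD[\fI^{-1}(B_I)\mid\fieldD](\omega) = \PI[B_I\mid\fieldI](\omega)
\end{equation}
for $\omega\notin N$, which is $\fieldI\subset\fieldD$-measurable. Let $\mathcal{D}$ be the class of sets $A\in\borelD$ for which $\omega\mapsto\PD[A|\fieldD](\omega)$ is $\fieldD$-measurable. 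Because each $\PD[\,\cdot\,|\fieldD](\omega)$ is a probability measure, $\mathcal{D}$ is closed under proper differences and monotone (countable) increasing unions, i.e.\ a Dynkin system; it contains the $\pi$-system $\cyl$; hence $\mathcal{D}\supset\sigma(\cyl)=\borelD$. This is the step that required us to first secure the pointwise measure on all of $\borelD$ before checking measurability in $\omega$, and it relies crucially on countability of $D$ so that $\cyl$ actually generates $\borelD$.

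For uniqueness, suppose $\PD'[\,\cdot\,|\fieldD]$ is another $\fieldD$-measurable kernel satisfying \eqref{eq:theorem:bochner:marginals}. Then for each cylinder $\fI^{-1}(B_I)$ and each $I$, the two kernels agree $\abstmeasure$-a.e., again off a countable union of null sets. On the common conull set, both kernels restrict to probability measures on $\borelD$ that coincide on the generating algebra $\cyl$; by the standard uniqueness part of Carath\'eodory's extension theorem they coincide on all of $\borelD$, giving equivalence in the sense used throughout the paper. The measurability with respect to $\fieldD=\sigma(\fieldI;I\in D)$ is automatic from the construction, since the cylinder values are $\fieldI$-measurable and Dynkin closure preserves measurability in $\omega$.
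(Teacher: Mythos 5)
Your proposal is correct and follows the same skeleton as the paper's proof: aggregate the countably many exceptional null sets into a single null set $N$ (countability of $D$ is used exactly here), apply the Kolmogorov--Bochner theorem pointwise in $\omega$ off $N$, and then establish measurability in $\omega$. Where you diverge is in that last step. The paper treats the conditionals as measurable maps $\abstspace\rightarrow\pMeas(\xspaceI)$ into Polish spaces of measures, invokes a general lemma that projective limits of projective families of measurable maps are measurable, and then extends the resulting map from the conull set to all of $\abstspace$ by a Borel-extension theorem (Kechris, Theorem 12.2). You instead run a direct $\pi$-$\lambda$ argument: on cylinder sets $\fI^{-1}(B_{\indI})$ the kernel equals $\PI[B_{\indI}|\fieldI](\omega)$, which is $\fieldI$-measurable, and the class of $A\in\borelD$ with measurable $\omega\mapsto\PD[A|\fieldD](\omega)$ is a Dynkin system containing the $\pi$-system of cylinders (directedness of $D$ makes the cylinders a $\pi$-system). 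Both routes are valid; yours is more elementary and self-contained, the paper's packages the argument into a reusable lemma. One point you should tighten: assigning an arbitrary fixed reference measure on $N$ produces a $\fieldD$-measurable kernel only if $N$ itself can be taken in $\fieldD$, and your $N_{\indI,\indJ}$ are a priori only in $\abstfield$. This is repairable without changing your argument --- since each $\borelI$ is countably generated, the exceptional set for \eqref{eq:def:cond:proj:family} may be chosen as the ($\fieldD$-measurable) set where the two kernels disagree on some member of a countable generating algebra of $\borelI$ --- but it needs to be said; the paper's use of the extension theorem is precisely what sidesteps this issue.
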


As the proof below shows, $\PD[\,.\,|\fieldD]$ can be regarded as
the projective limit of the measurable, measure-valued functions
$\omega\mapsto\PI[\,.\,|\fieldI](\omega)$.
In analogy to probability measures, we refer to the conditionals
$\PI[\XI|\fieldI]$ as the \emph{marginal conditional probabilities}
of $\PD[\XD|\fieldD]$, or \emph{marginals} for short.

\begin{proof}
The proof relies on the simple fact that measurability of mappings
is preserved under projective limits (as is continuity
\citep[][I.4.4]{Bourbaki:1966}):
\def\wI{w_{\indI}}
\def\wD{w_{\indD}}
\begin{lemma}
  \label{lemma:projlim:measurable:maps}
  Let $(\abstspace,\abstfield)$ be a measurable space,
  $\famD{\xspaceI,\borelI,\fJI}$ a projective family of measurable
  spaces with projective limit $(\xspaceD,\borelD)$, and
  $\famD{\wI:\abstspace\rightarrow\xspaceI}$ a projective family of
  measurable mappings. Then the projective limit $\wD:=\plim{\wI}$ is
  a measurable mapping $\abstspace\rightarrow\xspaceD$.
\end{lemma}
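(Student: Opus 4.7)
The plan is to obtain $\wD$ first as a map of sets via the universal property of the projective limit, and then to verify its $\abstfield/\borelD$-measurability by checking the defining condition on a convenient generator of $\borelD$. First I would note that projectivity of the family $\famD{\wI}$ means $\wI = \fJI \circ w_{J}$ for $I \po J$; by the set-theoretic universal property of the projective limit, this determines a unique mapping $\wD : \abstspace \to \xspaceD$ satisfying $\fI \circ \wD = \wI$ for every $I \in D$. At this point $\wD$ is only a function, and measurability still has to be argued.

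Next I would identify a workable generator of $\borelD$. Let
\[
\mathcal{E} := \{\fI^{-1}(\AI) : I \in D,\ \AI \in \borelI\}
\]
be the cylindrical collection. By definition $\topD$ is the coarsest topology on $\xspaceD$ making every $\fI$ continuous, so it is generated as a topology by the preimages $\fI^{-1}(U)$ of open sets $U \subset \xspaceI$. Since $D$ is countable and each Polish space $\xspaceI$ is second countable, these preimages form a countable subbase of $\topD$, and the standard argument then gives $\borelD = \sigma(\topD) = \sigma(\mathcal{E})$. This identification is the one step I would expect to write out with care: for uncountable $D$ or non-second-countable factors the Borel and cylindrical $\sigma$-algebras can diverge, which is why the countability hypothesis and the Polish assumption are used precisely here. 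This is the only real obstacle in the proof.

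Finally, measurability on a generator suffices. For any $\fI^{-1}(\AI) \in \mathcal{E}$,
\[
\wD^{-1}\bigl(\fI^{-1}(\AI)\bigr) \;=\; (\fI \circ \wD)^{-1}(\AI) \;=\; \wI^{-1}(\AI) \;\in\; \abstfield,
\]
since $\wI$ is measurable. The collection of subsets $B \subset \xspaceD$ with $\wD^{-1}(B) \in \abstfield$ is closed under complements and countable unions, hence is a $\sigma$-algebra; it contains $\mathcal{E}$ and therefore contains $\sigma(\mathcal{E}) = \borelD$. This yields the required measurability of $\wD$ and completes the argument.
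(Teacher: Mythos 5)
Your proof is correct and takes essentially the same route as the paper's: both obtain $w_{\indD}$ from the universal property, observe that $\fI\circ w_{\indD}=w_{\indI}$ is measurable for each $I$, and conclude via the fact that the canonical mappings $\fI$ generate $\borelD$. Your extra care in identifying $\borelD$ with the cylindrical $\sigma$-algebra $\sigma(\fI^{-1}\borelI;I\in D)$ is a worthwhile elaboration (the paper takes this as the definition of the projective limit $\sigma$-algebra and notes its agreement with $\sigma(\topD)$ in the appendix), but it does not change the argument.
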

\begin{proof}[Proof (Lemma \ref{lemma:projlim:measurable:maps})]
  Since $\famD{\wI}$ is projective, 
  $\wI\circ\fI=\fI\circ\wD$. By measurability of
  $\wI$ and $\fI$, the composition $\fI\circ\wD$
  is $\abstfield$-$\borelI$-measurable for all $I\in D$.
  Since the canonical mappings $\fI$ generate $\borelD$,
  $\wD$ is $\abstfield$-$\borelD$-measurable.
\end{proof}

\def\Ncomp{N^{\mbox{\tiny C}}}
\def\PDrest{\PD^{\ind{$\backslash$N}}}

  The regular conditional probabilities $\PI[\XI|\fieldI]$
  can be regarded as a family of random measures, \ie as
  measurable mappings $\PI:\abstspace\rightarrow\pMeas(\xspaceI)$
  defined by $\omega\mapsto\PI[\XI|\fieldI](\omega)$.
  To prove Theorem \ref{theorem:projlim:conditionals}, we argue
  that this family is projective (in the sense of App.~\ref{sec:background},
  Lemma \ref{lemma:projlim:mappings}),
  with the desired conditional probability
  $\PD[\XD|\fieldI]$ as its projective limit. However, we have to account
  for the fact projectivity
  of the mappings holds only almost everywhere.

  Denote by $\pMeas(\xspaceI)$ the set of probability measures
  on $\xspaceI$. The continuous mappings $\fJI$ induce, by means
  of $\PJ\mapsto\fJI(\PJ)$, a continuous projection 
  $\fJI:\pMeas(\xspaceJ)\rightarrow\pMeas(\xspaceI)$.
  With respect to these projectors, the measurable mappings
  $\PI:\abstspace\rightarrow\pMeas(\xspaceI)$ are projective almost everywhere: 
  For any pair $I\po J$ of indices, \eqref{eq:def:cond:proj:family} holds up to a null set
  $N_{\indJ\indI}\subset\abstspace$ of exceptions. Write
  $N:=\cup_{I\po J}N_{\indJ\indI}$ for the aggregate null set, $\Ncomp:=\abstspace\setminus N$
  for its complement.
  The restricted mappings $\PI\vert_{\Ncomp}:\Ncomp\rightarrow\pMeas(\xspaceI)$
  form a projective family of $\fieldD\cap\Ncomp$-measurable mappings, and by
  Lemma \ref{lemma:projlim:measurable:maps} have a unique, measurable
  projective limit $\PDrest:\Ncomp\rightarrow\pMeas(\xspaceD)$.
  This mapping satisfies
  \begin{equation}
    (\fI\PDrest)(\omega)=\fI(\PDrest(\omega))=\PI(\omega) \qquad\text{ for all }\omega\in\Ncomp\;.
  \end{equation}
  The first identity is due to the definition of projective limit mappings;
  the second follows by observing that, for any $\omega\in\Ncomp$, 
  $\famD{\PI(\omega)}$ is a projective family of probability
  measures with projective limit measure $\PDrest(\omega)$.

  As a countable projective limit of Polish spaces, $\xspaceD$ is Polish, and so is
  $M(\xspaceD)$. Therefore, the $\fieldD\cap\Ncomp$-measurable function
  $\PDrest:\Ncomp\rightarrow\pMeas(\xspaceD)$ has an extension to a measurable
  function $\PD:\Omega\rightarrow\pMeas(\xspaceD)$ \citep[][Theorem 12.2]{Kechris:1995}. 
  This function
  $\PD(\omega)=:\PD[\XD|\fieldD](\omega)$ is a regular conditional probability on $\xspaceD$, and
  satisfies \eqref{eq:theorem:bochner:marginals} $\abstmeasure$-almost everywhere.
\end{proof}

Like projective limits, pullbacks generalize from measures
to conditional probabilities. 
\begin{proposition}[Pullback of regular conditional probabilities]
  \label{lemma:pullback:conditionals}
  Let $P[X|\field]$ be a regular conditional probability on 
  a standard Borel space $\xspace$. Let $\txspace$ be a
  Hausdorff space, $\phi:\txspace\rightarrow\xspace$
  injective, and $\tborel:=\phi^{-1}\borel(\xspace)$ the
  induced $\sigma$-algebra on $\txspace$. Denote by $\tOmega\subset\Omega$
  the set of all $\omega$ satisfying $P^{\ast}[\phi(\txspace)|\field](\omega)=1$.
  Then $\nu(A,\omega):=P[\phi(A)|\field](\omega)$ is a
  probability kernel on $\txspace$, and can be regarded
  as a regular conditional probability of the random
  variable $\tX:=\phi^{-1}\circ X\vert_{\tOmega}$, given $\field\cap\tOmega$.
\end{proposition}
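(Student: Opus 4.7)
The plan is to combine the pointwise pullback construction of Appendix~\ref{sec:pullbacks}, applied separately for each $\omega$, with the measurability of the kernel $P[\,.\,|\field]$, and verify in turn that (i) $\nu$ is well-defined and constitutes a probability kernel on $\txspace$, and (ii) $\nu$ satisfies the integral identity characterizing a regular conditional probability of $\tX$ given $\field\cap\tOmega$.

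For well-definedness, observe that every $A\in\tborel$ has the form $A=\phi^{-1}(B)$ for some $B\in\borel(\xspace)$, and injectivity of $\phi$ yields $\phi(A)=B\cap\phi(\txspace)$. If $B_1,B_2\in\borel(\xspace)$ both satisfy $\phi^{-1}(B_i)=A$, then $B_1\symmdiff B_2\subset\xspace\setminus\phi(\txspace)$, so for $\omega\in\tOmega$ the hypothesis $P^{\ast}[\phi(\txspace)|\field](\omega)=1$ forces $P[B_1|\field](\omega)=P[B_2|\field](\omega)$. Hence $\nu(A,\omega):=P[B|\field](\omega)$ is unambiguous on $\tOmega$, and $\field$-measurability in $\omega$ is inherited directly from the kernel $P[\,.\,|\field]$. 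Countable additivity in $A$ then reduces to that of $P[\,.\,|\field](\omega)$: for disjoint $(A_n)\subset\tborel$ with lifts $(B_n)\subset\borel(\xspace)$, replace the lifts by $B_n':=B_n\setminus\bigcup_{k<n}B_k$, whose $\phi$-preimages still equal $A_n$ by injectivity.

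The conditional probability identity follows by unwinding the definitions. Fix $C\in\field\cap\tOmega$ and $A=\phi^{-1}(B)\in\tborel$. Since $C\subset\tOmega$ and $\tX=\phi^{-1}\circ X\vert_{\tOmega}$, the events $\{\tX\in A\}\cap C$ and $\{X\in B\}\cap C$ agree up to an $\abstmeasure$-null set (their symmetric difference lies in $\{X\notin\phi(\txspace)\}\cap\tOmega$, which is null by the defining property of $\tOmega$), so
\begin{equation*}
  \abstmeasure(\{\tX\in A\}\cap C)=\abstmeasure(\{X\in B\}\cap C)=\int_C P[B|\field]\,d\abstmeasure=\int_C\nu(A,\omega)\,d\abstmeasure(\omega),
\end{equation*}
where the middle equality is the defining property of $P[\,.\,|\field]$ as the regular conditional probability of $X$ given $\field$. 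This is exactly the integral identity required of $\nu$.

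The main obstacle I expect is the measure-theoretic bookkeeping around $\tOmega$: verifying that $\tOmega\in\field$ and that $\{X\in\phi(\txspace)\}$ differs from a measurable event by a $\abstmeasure$-null set, so that the argument above is meaningful on the full generated $\sigma$-algebra. Both points rely on analyticity (hence universal measurability) of $\phi(\txspace)$, together with standard facts about outer measures and measurable envelopes on standard Borel spaces; I would invoke the pullback machinery collected in Appendix~\ref{sec:background} rather than re-derive these technicalities in the body of the proof.
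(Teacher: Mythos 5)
Your overall architecture is the paper's: the printed proof consists of exactly the observation that $\tX$ is a valid random variable plus ``a simple point-wise application of pullbacks to the measures $P[\,.\,|\field](\omega)$'', and your well-definedness, $\omega$-measurability and countable-additivity arguments are a correct unwinding of that pointwise pullback (Lemma~\ref{lemma:pullback}). The one step that fails as written is your justification of the integral identity: you assert that $\lbrace X\notin\phi(\txspace)\rbrace\cap\tOmega$ is $\abstmeasure$-null ``by the defining property of $\tOmega$''. That property, $P^{\ast}[\phi(\txspace)|\field](\omega)=1$, yields $P_{\ast}[\xspace\setminus\phi(\txspace)|\field](\omega)=0$, i.e.\ the complement of the image has \emph{inner} measure zero; its \emph{outer} measure can perfectly well be $1$ (take $\phi(\txspace)$ nonmeasurable with full outer and zero inner measure --- precisely the situation the pullback is designed to handle). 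So the symmetric difference $X^{-1}(B\setminus\phi(\txspace))\cap C$ is inner-negligible but not a null set, and $\lbrace\tX\in A\rbrace\cap C=X^{-1}(B\cap\phi(\txspace))\cap C$ need not even lie in $\abstfield$, so ``$\abstmeasure$ of it'' is undefined. The repair is the one the paper implicitly uses: read the left-hand side of your display with respect to the restricted (pullback) measure $\tabstmeasure$ on the trace $\sigma$-algebra of $\tOmega\cap X^{-1}\phi(\txspace)$, under which $\tabstmeasure\bigl(X^{-1}(B\cap\phi(\txspace))\cap C\bigr)=\abstmeasure(X^{-1}B\cap C)$ holds \emph{by definition} of the pullback (Lemma~\ref{lemma:pullback}); the remaining two equalities in your chain are then exactly right.

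Relatedly, your closing appeal to analyticity of $\phi(\txspace)$ is both unavailable and unnecessary: $\txspace$ is only assumed Hausdorff and $\phi$ only injective --- no continuity or measurability of $\phi$ is hypothesized --- so the image can be an arbitrary subset of $\xspace$ and universal measurability does not follow. It is also not needed, since the outer-measure condition built into the definition of $\tOmega$ is precisely the hypothesis of Lemma~\ref{lemma:pullback}, which is stated for arbitrary (possibly nonmeasurable) images; measurability of $\tOmega$ itself is simply assumed in the paper (cf.\ Sec.~\ref{sec:bayesian:pullback}) rather than derived.
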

Clearly, $\tOmega$ may be empty. A pullback construction of a model
will therefore typically involve a result characterizing
either $\tOmega$ or a subset of $\tOmega$. The characterization is 
usually expressed as the image of $\tOmega$ under a suitable parameter
random variable, \ie as a result describing a set of ``parameter values''
for which the model concentrates on $\txspace$. 
An example of such a characterization is the Kolmogorov continuity theorem
mentioned in Example \ref{example:GP:C}: The Gaussian process in the example can be parametrized
by its mean and covariance functions, and the theorem specifies
a subset of parameter for which the pullback exists.
Lemma
\ref{lemma:examples:DP:concentration:prior},
\ref{lemma:examples:DP:concentration:likelihood}
and 
\ref{lemma:example:cayley:concentration}
in Sec.~\ref{sec:examples} are further examples of such results.
\begin{proof}
  Since $\tborel$ is the $\sigma$-algebra induced
  by $\phi$ and $\phi$ is injective, the inverse $\phi^{-1}$ is
  automatically measurable with respect to $\borel(\xspace)\cap\phi(\txspace)$,
  so the restriction of the mapping
  $\phi^{-1}\circ X$ is indeed a valid $\txspace$-valued random variable 
  on $\tOmega$. The result follows by a simple point-wise application
  of pullbacks to the measures $P[\,.\,|\field](\omega)$ for
  $\omega\in\abstspace$.
\end{proof}
The combination of Theorem \ref{theorem:projlim:conditionals} 
and Lemma \ref{lemma:pullback:conditionals} results in a two-stage
approach to the construction of regular conditional probabilities,
analogous to the two-stage construction of stochastic processes
in the sense of Definition \ref{def:cr:stoch:proc}:
First construct a suitable projective limit 
$\PD[\XD|\fieldD]$, and then pull back to a (possibly
non-measurable) subspace $\txspace\subset\xspaceD$,
or to a space $\txspace$ embedded into $\xspaceD$
by a Borel embedding $\phi$.

Both steps can be combined into a single step under an
additional assumption --  
namely that the embedding of $\txspace$, \ie the image $\phi(\txspace)$,
is actually
measurable in $\xspaceD$. The extension result obtained for this
case can be regarded as a conditional probability analogue of the
well-known projective limit theorem of Prokhorov 
\citep[][IX.4.2]{Bourbaki:2004}, just as Theorem
\ref{theorem:projlim:conditionals} is analogous to the
extension theorems of Kolmogorov and Bochner.
\begin{corollary}[Prokhorov extension]
  \label{corollary:prokhorov:extension}
  Let $\famD{\xspaceI,\borelI,\fJI}$ be a countably indexed projective
  system of Polish measurable spaces, $\txspace$ a Hausdorff space,
  $\phi:\txspace\rightarrow\xspaceD$ continuous and injective, and
  require $\phi(\txspace)\in\borelD$.
  Let $\famD{\PI[\XI|\fieldI]}$ be a projective family of
  probability kernels on $\borelI\times\abstspace$. Define $\tOmega$
  to be the subset $\tOmega\subset\abstspace$ of all $\omega$
  for which the family of measures $\famD{\PI[\,.\,|\fieldI](\omega)}$
  satisfies the following ``Prokhorov condition'':\\
  For all $\varepsilon>0$, there is a compact set $K\subset\txspace$
  such that
  \begin{equation}
    \PI[\phiI K|\fieldI](\omega) > 1 - \varepsilon \qquad\qquad\text{
      for all }I\in D\;.
  \end{equation}
  Then there is a unique (up to equivalence) probability kernel 
  $\tP[\,.\,|\tilde{\field}_{\indD}](\omega)$ on
  $\borel(\txspace)\times\tOmega$ with the projective family as its
  marginals, \ie $\phiI\tP[\,.\,|\fieldD]\eqae\PI[\,.\,|\fieldI]$.
  This probability kernel
  \begin{enumerate}
  \item is a Radon measure for each $\omega\in\tOmega$;
  \item is the pullback of 
    $\PD[\,.\,|\fieldD]=\plim\famD{\PI[\,.\,|\fieldI]}$ under $\phi$,
    and hence a conditional probability given $\tilde{\field}_{\indD}
    =\fieldD\cap\tOmega$.
  \end{enumerate}
\end{corollary}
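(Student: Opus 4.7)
The plan is to combine the kernel projective limit of Theorem \ref{theorem:projlim:conditionals} with a pointwise application of the classical Prokhorov extension theorem, then transport the resulting kernel from $\xspaceD$ to $\txspace$ via Proposition \ref{lemma:pullback:conditionals}. The hypothesis $\phi(\txspace)\in\borelD$ eliminates the outer-measure subtleties that would otherwise arise in the pullback step and reduces the argument to showing that the projective limit kernel already concentrates on $\phi(\txspace)$ whenever $\omega\in\tOmega$.

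First I would apply Theorem \ref{theorem:projlim:conditionals} to obtain a $\fieldD$-measurable probability kernel $\PD[\,.\,|\fieldD]$ on $(\xspaceD,\borelD)$ satisfying $\fI\PD[\,.\,|\fieldD]\eqae\PI[\,.\,|\fieldI]$ for every $I\in D$. Next, fix an arbitrary $\omega\in\tOmega$ and apply the classical Prokhorov theorem (Bourbaki IX.4.2) to the projective family $\famD{\PI[\,.\,|\fieldI](\omega)}$ along the maps $\phiI=\fI\circ\phi$; the tightness condition required by Prokhorov is exactly the hypothesis defining $\tOmega$, and it yields a Radon probability measure $\mu_{\omega}$ on $\txspace$ with $\phiI(\mu_{\omega})=\PI[\,.\,|\fieldI](\omega)$ for every $I$. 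Because $\phi$ is injective and $\phi(\txspace)\in\borelD$, the image $\phi(\mu_{\omega})$ is a Borel probability measure on $\xspaceD$ concentrated on $\phi(\txspace)$ that has the same marginal system as $\PD[\,.\,|\fieldD](\omega)$. Uniqueness of the projective limit of measures then forces $\phi(\mu_{\omega})=\PD[\,.\,|\fieldD](\omega)$, and in particular $\PD[\phi(\txspace)|\fieldD](\omega)=1$ throughout $\tOmega$.

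With concentration established, Proposition \ref{lemma:pullback:conditionals} applies directly to $\PD[\,.\,|\fieldD]$ and $\phi$, producing the pullback kernel $\tP[A|\tilde{\field}_{\indD}](\omega):=\PD[\phi(A)|\fieldD](\omega)$ on $\borel(\txspace)\times\tOmega$, measurable with respect to $\tilde{\field}_{\indD}=\fieldD\cap\tOmega$. By construction this kernel coincides pointwise on $\tOmega$ with $\mu_{\omega}$, hence is Radon for every $\omega\in\tOmega$; the marginal identity $\phiI\tP[\,.\,|\tilde{\field}_{\indD}]\eqae\PI[\,.\,|\fieldI]$ follows from Step 1 combined with $\phiI=\fI\circ\phi$; and uniqueness is inherited from the uniqueness parts of Theorem \ref{theorem:projlim:conditionals} and the classical Prokhorov theorem, applied pointwise in $\omega$. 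The delicate point, which I expect to be the main obstacle, is the pointwise identification in the second step: one must verify that the Radon measure produced by Prokhorov on $\txspace$ pushes forward along $\phi$ to the independently constructed projective limit kernel on $\xspaceD$. This step uses both the measurability of $\phi(\txspace)$ (so that $\phi(\mu_{\omega})$ is defined on $\borelD$ and concentrated on a measurable set) and the injectivity of $\phi$ (so that Proposition \ref{lemma:pullback:conditionals} applies). Once this pointwise equality is in place, measurability in $\omega$ of the pullback is automatic from Theorem \ref{theorem:projlim:conditionals}, and the remaining conclusions are routine.
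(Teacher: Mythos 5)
Your proposal is correct and follows essentially the same route as the paper: construct the projective limit kernel on $\xspaceD$ via Theorem \ref{theorem:projlim:conditionals}, apply the classical Prokhorov theorem pointwise in $\omega$ to obtain a Radon measure on $\txspace$, identify its pushforward under $\phi$ with the projective limit kernel by uniqueness of the Kolmogorov--Bochner extension, and use the measurability of $\phi(\txspace)$ to justify the pullback via Proposition \ref{lemma:pullback:conditionals}. The ``delicate point'' you flag is resolved exactly as you anticipate; the paper phrases it as $\phi=\plim\famD{\phiI}$ implying $\PD[\,.\,|\fieldD](\omega)=\phi(\nu_{\omega})$, which is the same uniqueness argument.
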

In the following sections, we will derive a number of results on
how certain statistical properties of conditional models are preserved
under projective limits and pullbacks. For conditional probabilities
constructed by means of the Corollary, statement (2)
makes all these results immediately applicable,
since the constructed probability kernel 
$\tP[\,.\,|\tilde{\field}_{\indD}]$
can effectively be decomposed into the projective limit 
$\PD[\,.\,|\fieldD]$ and a subsequent pullback.
\begin{proof}
  For almost all $\omega\in\tOmega$, the measures
  $\PI[\,.\,|\fieldI](\omega)$ form a projective family and satisfy
  the Prokhorov condition. Since the spaces $\xspaceI$ are Polish,
  each of these measures is a Radon measure.
  By Prokhorov's theorem \citep[][IX.4.2]{Bourbaki:2004}, there is
  a unique Radon probability measure $\nu_{\omega}$ on $\txspace$ satisfying
  $\phiI(\nu_{\omega})=\PI[\,.\,|\fieldI](\omega)$.
  By the Kolmogorov-Bochner extension theorem 
  (App.~\ref{sec:background}, Theorem \ref{theorem:bochner}), there is also a
  unique projective limit probability kernel $\PD[\,.\,|\fieldD]$
  on $\xspaceD=\plim\famD{\xspaceI}$. 
  Since $\phi=\plim\famD{\phiI}$, we have 
  $\PD[\,.\,|\fieldD](\omega)=\phi(\nu_{\omega})$ for almost all $\omega\in\tOmega$.
  The image $\phi(\txspace)$ is measurable, and so
  $\PD^{\ast}[\phi(\txspace)|\fieldD](\omega)=\nu_{\omega}(\phi^{-1}\phi\txspace)=\nu_{\omega}(\txspace)$.
  Therefore, the pullback under $\phi$ exists, and by uniqueness has to
  coincide with $\nu_{\omega}$ almost everywhere.
\end{proof}
The induced conditional probabilities $\tP[\,.\,|\fieldD]$
on $\txspace$ are regular, since measurability
in $\omega$ carries over from $\xspaceD$ under the pullback.
This is remarkable in so far as virtually no requirements
are imposed upon the space $\txspace$ --
in particular, the topology of $\txspace$ need not admit a countable
subbase -- and conditional probabilities on
$\txspace$ need not be regular in general.
In other words, much as the Radon regularity of measures 
on a space which supports non-Radon probability measures
is induced by the marginals, so is regularity of the
conditional.

\subsection{Criteria for Projectivity}

To construct a conditional stochastic process
$\PD[\XD|\fieldD]$ by means of Theorem 
\ref{theorem:projlim:conditionals} 
will in practice require proof
that a given family of conditional 
probabilities is projective. The following
two results provide applicable criteria.

\begin{lemma}[Criterion 1]
  \label{lemma:cond:proj:criterion:1}
  Let the random variables $\XI$
  satisfy $\fJI\XJ=\XI$, and let 
  $\famD{\fieldI}$ be a directed family of
  $\sigma$-algebras.
  Then the family $\famD{\PI[\XI|\fieldI]}$
  is projective if and only if
  the random variables satisfy the conditional
  independence relations
  \begin{equation}
    \label{eq:criterion:1}
    \XI\condind_{\fieldI}\fieldJ \qquad\text{ for all }
    I\po J\;.
  \end{equation}
\end{lemma}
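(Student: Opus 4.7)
The plan is to reduce both sides of the equivalence to the same equality between conditional probabilities. First I would rewrite the projectivity condition of Definition \ref{def:cond:proj:family} explicitly. For any $A\in\borelI$, the definition of the pushforward kernel gives
\begin{equation*}
(\fJI\PJ)[A\,|\,\fieldJ] = \PJ[\fJI^{-1}A\,|\,\fieldJ] = \P[\XJ\in\fJI^{-1}A\,|\,\fieldJ]\;.
\end{equation*}
Using the hypothesis $\fJI\XJ=\XI$, the event $\{\XJ\in\fJI^{-1}A\}$ coincides with $\{\XI\in A\}$, so the right-hand side equals $\P[\XI\in A\,|\,\fieldJ]$. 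Thus projectivity is equivalent to
\begin{equation*}
\P[\XI\in A\,|\,\fieldJ] \eqae \P[\XI\in A\,|\,\fieldI] \qquad\text{for all } A\in\borelI,\ I\po J\;.
\end{equation*}

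The second step is to recognize this equality as the standard characterization of conditional independence given a sub-$\sigma$-algebra. Since $\famD{\fieldI}$ is directed, $\fieldI\subset\fieldJ$ whenever $I\po J$, and hence $\sigma(\fieldI,\fieldJ)=\fieldJ$. The relation $\XI\condind_{\fieldI}\fieldJ$ is therefore equivalent (by the well-known equivalence, see e.g. Kallenberg, Proposition 6.6) to the statement that $\P[\XI\in A\,|\,\fieldJ]$ is a version of $\P[\XI\in A\,|\,\fieldI]$ for every $A\in\borelI$ -- precisely the equality obtained above.

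The only technical care needed is the handling of null sets: the pointwise equality holds only almost surely, and a priori the exceptional set depends on $A$. Because $\xspaceI$ is standard Borel, $\borelI$ is countably generated, so a single common null set can be chosen for all $A\in\borelI$ simultaneously, and the regular conditional probabilities $\PI[\XI\,|\,\fieldI]$ and $\PJ[\XI\,|\,\fieldJ]$ (restricted to $\fJI^{-1}\borelI$) can be compared pointwise on a set of full measure. This is where the standard Borel assumption from the hypotheses of Theorem \ref{theorem:projlim:conditionals} is invoked, but the proof of the lemma itself is otherwise a direct identification of the two conditions, with no substantive obstacle; the main point is simply that $\fJI\XJ=\XI$ turns the pushforward on the sample-space side into a change of the conditioning $\sigma$-algebra.
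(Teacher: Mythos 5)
Your proof is correct and follows essentially the same route as the paper's: both rewrite projectivity via $\fJI\XJ=\XI$ as the identity $\P[\XI\in A\,|\,\fieldJ]\eqae\P[\XI\in A\,|\,\fieldI]$ and then identify this with $\XI\condind_{\fieldI}\fieldJ$ through Kallenberg's Proposition 6.6, using directedness to get $\sigma(\fieldI,\fieldJ)=\fieldJ$. Your explicit remarks on the common null set and on why $\sigma(\fieldI,\fieldJ)=\fieldJ$ are welcome clarifications of points the paper leaves implicit, but they do not change the argument.
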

\begin{proof}
  By the properties of conditional independence,  
  \begin{equation}
    \label{eq:proof:lemma:criterion:1}
    \XI\condind_{\fieldI}\fieldJ
    \quad\Leftrightarrow\quad
    \abstmeasure[A|\fieldI,\fieldJ]\eqae\abstmeasure[A|\fieldI]
    \quad\text{ for all } A\in\sigma(\XI)\;.
  \end{equation}
  See \citep[][Proposition 6.6]{Kallenberg:2001}.
  Application to the definition of projectivity yields
  \begin{equation}
      \PJ[\fJI^{-1}\AI|\fieldJ]
      \quad\stackrel{\fJI\XJ=\XI}{=}
      \abstmeasure[\XI^{-1}\AI|\fieldJ]
      \quad\stackrel{\eqref{eq:proof:lemma:criterion:1}}{=}
      \PI[\AI|\fieldI]\;.
  \end{equation}
\end{proof}
We recall that projectivity
of conditional probabilities $\PI[\XI|\ThetaI]$ as in
\eqref{eq:def:cond:proj:family} implies projectivity of the corresponding
unconditional measures $\PI=\XI(\abstmeasure)$.
Lemma \ref{lemma:cond:proj:criterion:1} gives a necessary and
sufficient condition for the converse to hold as well: 
If the $\sigma$-algebras $\fieldI$ are
generated by parameter variables
$\ThetaI$, \eqref{eq:criterion:1} takes the form
$\XI\condind_{\ThetaI}\ThetaJ$.
For a fixed $I$, the criterion demands that 
-- given full knowledge of $\ThetaI$ --
information about the parameters corresponding to any other dimensions
will not change our mind about $\XI$. If this is true
for any $I$, the family is conditionally projective.
The lemma implies a similar result by \citet[][IV, 3.1]{Lauritzen:1988}
on sufficient statistics: Since \eqref{eq:criterion:1} is a necessary condition,
any sufficient statistics $\famD{\SI}$ satisfy $\XI\condind_{\SI}\SJ$
if the family of models is known to be projective.

\def\nuI{\nu_{\indI}}
\def\nui{\nu_{\indi}}

In practice, a candidate family of finite-dimensional conditionals
can be expected to be defined by densities, with respect to some
family $\famD{\nuI}$ of carrier measures.
The next criterion addresses the special case where the projective system consists
of product spaces $\xspaceI=\prod_{i\in I}\xspace_{\indi}$
as in Example \ref{example:GP:C}, and hence $\fJI=\projectorJI$.
The carrier measures are then typically product measures, and
proving that the family is projective 
involves an application of Fubini's theorem. The following criterion
makes this step generic.
\begin{lemma}[Criterion 2]
  \label{lemma:cond:proj:criterion:2}
  Let $\famD{\PI[\XI|\ThetaI]}$ be a
  family of conditional probabilities on a projective system
  $\famD{\prod_{i\in I}\xspace_{\indi},\otimes_{i\in I}\borel_{\indi},\projectorJI}$,
  where each $\xspace_{\indI}$ is Polish. Require:
  (1) For all $I\in D$, the conditional density $p_{\indI}$ of
  $\PI[\XI|\ThetaI]$ with respect to a
  carrier measure $\nuI$ on $\xspaceI$ exists.
  (2) The carrier measures are product measures
    $\nuI=\otimes_{i\in I}\nui$.
  Then the family $\PI[\XI|\ThetaI]$ of conditionals
  is projective if and only if
  \begin{equation}
    \label{eq:lemma:criterion:2:condition}
    \int_{\xspace_{\indJI}}p_{\indJ}(x_{\indJ}|\theta_{\indJ})d\nu_{\indJI}(x_{\indJI})
    =
    p_{\indI}(x_{\indI}|\projectorJI\theta_{\indJ})
    \qquad\text{ whenever } I\po J\;.
  \end{equation}
\end{lemma}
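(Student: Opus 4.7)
The plan is to translate the abstract projectivity condition \eqref{eq:def:cond:proj:family} into a pointwise identity of densities by unpacking both sides as integrals against the product carrier measure and invoking Fubini's theorem, then concluding with the uniqueness of Radon-Nikodym derivatives. Since both directions of the biconditional follow from the same chain of equalities, I will set up the translation and then read it both ways.

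First I would observe that the directed structure $\fieldI \subset \fieldJ$ forces $\ThetaI$ to be a $\sigma(\ThetaJ)$-measurable function of $\ThetaJ$; in the product-space setting this function is exactly the coordinate projection, so writing $\projectorJI\theta_{\indJ}$ for the value of $\ThetaI$ given $\ThetaJ=\theta_{\indJ}$ is coherent with the notation on the right-hand side of \eqref{eq:lemma:criterion:2:condition}. Under this identification, projectivity says that for every $\AI\in\borelI$ and $\abstmeasure$-a.e.\ $\omega$,
\begin{equation}
\PJ[\projectorJI^{-1}\AI\,|\,\ThetaJ](\omega)
\;=\;
\PI[\AI\,|\,\ThetaI](\omega)\;.
\end{equation}

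Next I would express both sides as integrals against the densities. On the left, since $\nuJ=\nuI\otimes\nu_{\indJI}$ by assumption (2), Fubini's theorem (applicable because each $\xspace_{\indi}$ is Polish, hence each $\nui$ is $\sigma$-finite when restricted appropriately, and the integrand is nonnegative and measurable) gives
\begin{equation}
\int_{\projectorJI^{-1}\AI}p_{\indJ}(\xJ|\theta_{\indJ})\,d\nuJ(\xJ)
\;=\;
\int_{\AI}\Bigl(\int_{\xspace_{\indJI}}p_{\indJ}(\xJ|\theta_{\indJ})\,d\nu_{\indJI}(x_{\indJI})\Bigr)\,d\nuI(\xI)\;.
\end{equation}
On the right, $\PI[\AI|\ThetaI=\projectorJI\theta_{\indJ}]=\int_{\AI}p_{\indI}(\xI|\projectorJI\theta_{\indJ})\,d\nuI(\xI)$. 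Projectivity is therefore equivalent to the two integrands, both of which are densities of the same measure with respect to $\nuI$, agreeing $\nuI$-a.e.\ on $\xspaceI$ for $\abstmeasure$-a.e.\ value of $\ThetaJ$. By uniqueness of Radon-Nikodym derivatives this reduces to \eqref{eq:lemma:criterion:2:condition} (interpreted almost everywhere), and conversely \eqref{eq:lemma:criterion:2:condition} implies the measure identity by integrating over any $\AI$.

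The only real obstacle is bookkeeping of the two independent almost-sure qualifiers: the identity in \eqref{eq:lemma:criterion:2:condition} holds for $\nuI$-a.e.\ $\xI$ and for $\abstmeasure$-a.e.\ $\theta_{\indJ}$, and one must ensure that the exceptional sets do not obstruct the equivalence with the everywhere-versus-a.e.\ versions implicit in Definition~\ref{def:cond:proj:family}. This is handled by the same convention that regular conditional probabilities are defined up to equivalence, and by the fact that any pair of versions of the densities $p_{\indI},p_{\indJ}$ differ only on null sets, so passing between versions does not affect the integrated quantities. With this observation the equivalence of \eqref{eq:def:cond:proj:family} and \eqref{eq:lemma:criterion:2:condition} is established.
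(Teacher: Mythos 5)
Your proposal is correct and follows essentially the same route as the paper: unpack projectivity over cylinder sets $\projectorJI^{-1}\AI$ via Fubini--Tonelli using the product structure $\nu_{\indJ}=\nu_{\indI}\otimes\nu_{\indJI}$, then conclude by uniqueness of Radon--Nikodym derivatives, reading the chain of equalities in both directions. The paper's only additional explicit step is invoking Tonelli to note that the marginalized density $x_{\indI}\mapsto\int p_{\indJ}\,d\nu_{\indJI}$ is $\borelI$-measurable (needed before comparing it to $p_{\indI}$ as densities), which your argument uses implicitly.
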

The use of $J\setminus I$ as an index 
is justified by the fact that $D$ consists
of all finite subsets of a given set, and is ordered by inclusion.
Therefore, $I\po J$ implies $J\setminus I\in D$.

\begin{proof}
  First suppose condition \eqref{eq:lemma:criterion:2:condition}
  is satisfied. 
  Denote by $p_{\indI}(x_{\indI}|\theta_{\indI})$ the conditional
  density of $\PI[\XI|\ThetaI]$. By Fubini's theorem, 
  \begin{equation}
    \begin{split}
    \int_{\projectorJI^{-1}\AI}
    p_{\indJ}(x_{\indJ}|\theta_{\indJ})d\nu_{\indJ}(x_{\indJ})
    =&
    \int_{\AI}(
    \int_{\xspace_{\indJI}}
    p_{\indJ}(x_{\indI},x_{\indJI}|
    \theta_{\indJ})d\nu_{\indJI}(x_{\indJI}))d\nu_{\indJ}(x_{\indJ})\\
    =&
    \int_{\AI}
    p_{\indI}(x_{\indI}|
    \projectorJI\theta_{\indJ})d\nu_{\indJ}(x_{\indJ})
    \end{split}
  \end{equation}
  for all $\AI\in\borelI$. Hence,
  $\PJ[\projectorJI^{-1}\AI|\ThetaJ=\theta_{\indJ}]
  =
  \PI[\AI|\ThetaI=\projectorJI\theta_{\indJ}]$ for all $\theta_{\indJ}$ up to
  a null set, 
  which establishes the ``if'' implication.
  Conversely, assume that the family is projective.
  Abbreviate $a(x_{\indI},\theta_{\indJ}):=
  \int_{\xspace_{\indJI}}p_{\indJ}(x_{\indJ}|\theta_{\indJ})d\nu_{\indJI}(x_{\indJI})$.
  Then $\PJ[\projectorJI^{-1}\AI|\ThetaJ=\thetaJ]=   \int_{\AI}
    a(x_{\indI},\theta_{\indJ})
    d\nu_{\indI}(x_{\indI})$, and 
  for all $\AI\in\borelI$,
  \begin{equation}
    \int_{\AI}
    a(x_{\indI},\theta_{\indJ})
    d\nu_{\indI}(x_{\indI})
    =
    \int_{\AI}
    p_{\indI}(x_{\indI}|\theta_{\indI})
    d\nu_{\indI}(x_{\indI})
    =
    \int_{\AI}p(x_{\indI}|\projectorJI\theta_{\indJ})d\nu_{\indI}(x_{\indI})
  \end{equation}
  The first identity is simply projectivity, the second
  one follows from the fact that $a(\,.\,,\theta_{\indJ})$ is
  $\borelI$-measurable by Tonelli's theorem.
  Since $a$ and $p$ integrate
  identically over all $\AI$ and are
  $\borelI$-measurable,
  $a(\,.\,,\theta_{\indJ})=p_{\indI}(\,.\,|\projectorJI\theta_{\indJ})$
  holds $\nu_{\indI}$-a.s.
\end{proof}

\pdfoutput=1

\section{Application to Bayesian Models}
\label{sec:bayesian}
 
The results of the previous section provide
the formal
means of defining projective limits of Bayesian models,
since a Bayesian model is completely defined by a pair of conditional
probabilities. Combination of such projective limits with pullbacks
under Borel embeddings allows us to represent nonparametric 
Bayesian models by projective families of finite-dimensional Bayesian models.
Since the term ``parametric model'' is often associated with
finite-dimensional or dominated models,
we will instead use the term ``parametrized'' to describe
a statistical model indexed by a parameter, regardless of whether the dimension
of the parameter is finite or infinite.

\subsection{Parametrized and Bayesian Models}

We briefly recall the formal notion of model and parameter;
a detailed discussion is given by \citet[][Ch.\ 1.5.5]{Schervish:1995}.
Let $X:\abstspace\rightarrow\xspace$ be a random variable with
values in a Polish space $\xspace$, such that
$P^{\infty}=X^{\infty}(\abstmeasure)$
is exchangeable. Let $\pMeas(\xspace)$ be the
set of probability measures on $\xspace$, and denote by
$F:\xspace^{\infty}\rightarrow\pMeas(\xspace)$ the mapping
induced by the empirical measure. Let 
$\Psi$ be a parametric index, \ie a bimeasurable mapping
from the image $(F\circ X^{\infty})(\Omega)\subset\pMeas(\xspace)$
onto a measurable space $(\tspace,\borelT)$.
Then the derived random variable $\Theta:=\Psi\circ F\circ X^{\infty}$
is called a \emph{parameter}, and we call the regular conditional
probability $P[X|\Theta]$ a \emph{parametrized model}.
In summary,
\begin{diagram}[LaTeXeqno,small]
  \label{diagram:parametric:model}
  \abstspace &\rTo^{\quad X^{\infty}\quad } & \xspace^{\infty} & \rTo^{\quad F\quad} &
  \pMeas(\xspace)\supset\model &\rTo^{\quad \Psi\quad } & \tspace \;,
\end{diagram}
where the set
$\model:=\lbrace P[X|\Theta=\theta] \,|\, \theta\in\tspace\rbrace$
is the model $P[X|\Theta]$ regarded as a family of measures.
The assumption that $\xspace$ is Polish guarantees both the existence of
regular conditional probabilities on $\xspace$ and the validity
of de Finetti's theorem \citep[][Theorem 11.10]{Kallenberg:2001}. The theorem
in turn implies a law of large numbers, which
guarantees convergence 
$\lim_n F_n(X^{n})\rightarrow X(\abstmeasure)$ of the empirical
measure in the \wstar topology
on $\pMeas(\xspace)$, and hence ensures that $F$ is well-defined.

The parameter random variable
$\Theta$ induces an image measure
$\PTheta=\Theta(\abstmeasure)$
on the parameter space $(\tspace,\borelT)$.
For any given abstract random event $\omega\in\Omega$, the
corresponding value $\theta=\Theta(\omega)$ of the parameter
is completely determined by $X^{\infty}(\omega)$, as the image under
$\Psi\circ F$. The partial information about $\Theta(\omega)$
contained in a finite sample $X^n(\omega)=x^n$ can be conditioned
on as $\PTheta[\Theta|X^n=x^n]$. Under suitable
conditions the actual value $\theta=\Theta(\omega)$ is asymptotically
recovered as 
$\PTheta[\Theta|X^n=x^n]\xrightarrow{n\rightarrow\infty}\delta_{\theta}$.
In this context, $\PTheta(\Theta)$ is referred to as a \emph{prior
distribution}, $P[X|\Theta]$ as a \emph{sampling model} or \emph{likelihood},
and $\PTheta[\Theta|X]$ as the \emph{posterior} under observation $X$.
Additionally, the prior can be represented as a parametrized model
$\PTheta[\Theta|Y=y]$, where $Y$ is a
\emph{hyperparameter}. We refer to the whole system summarily as
the \emph{Bayesian model} defined by $P[X|\Theta]$ and $\PTheta[\Theta|Y]$.
For our purposes, it is sufficient to assume that the prior
is the ``true'' prior, \ie the actual image measure under $\Theta$.
If the sampling model is dominated,
Bayes' theorem is applicable, and the posterior can be represented
by the density $\frac{p(x|\theta)}{p(x)}$ with respect to the prior.
For undominated models, notably the Dirichlet process,
some alternative to Bayes' theorem is required. In Bayesian nonparametrics, this
alternative is 
usually conjugacy (Sec.~\ref{sec:conjugacy}).

\subsection{Application of Projective Limits}
\label{sec:bayesian:projlim}

\def\PsiI{\Psi_{\indI}}
\def\PsiJ{\Psi_{\indJ}}
\def\tPsi{\tilde{\Psi}}
\def\thetaI{\theta_{\indI}}
\def\thetaJ{\theta_{\indJ}}
Suppose that, for a projective system of sample spaces
$\famD{\xspaceI,\fJI}$, a parametrized model is given on each
space: Each object in
\eqref{diagram:parametric:model}, except
for the abstract probability space $\abstspace$,
is equipped with an index $I$.
\begin{diagram}[LaTeXeqno,small]
  \label{diagram:parametric:model:projlim}
  & & &\xspaceJ^{\infty} & \rTo^{\quad F_{\indJ}\quad} &
  \pMeas(\xspaceJ)\supset\model_{\indJ} &\rTo^{\quad \Psi_{\indJ} \quad } & \tspaceJ\\
  \abstspace & & \ruTo(3,1)^{\XJ^{\infty}} & \dTo_{\fJI} &&
  \dTo_{\fJI} && \dDotsto_{\gJI} \\
  & \rdTo(3,1)_{\XI^{\infty}} 
  & & \xspaceI^{\infty} & \rTo^{F_{\indI}} &
  \pMeas(\xspaceI)\supset\model_{\indI} &\rTo^{\Psi_{\indI}} & \tspaceI
\end{diagram}
The mappings $\fJI:\xspaceJ\rightarrow\xspaceI$
induce projections $\fJI:\xspaceJ^{\infty}\rightarrow\xspaceI^{\infty}$ and
$\fJI:\pMeas(\xspaceJ)\rightarrow\pMeas(\xspaceI)$. If $\fJI\model_{\indJ}=\model_{\indI}$,
bimeasurability of $\PsiJ$ implies that $\fJI$ has a unique, measurable
pushforward $\gJI:=\PsiI\circ\fJI\circ\PsiJ^{-1}$ on $\tspaceJ$. 
Hence, if the
conditional probabilities
$\PI[\,.\,|\ThetaI]$ defining the parametrized models
$\model_{\indI}$ are projective, we obtain 
$\fJI\PJ[\,.\,|\ThetaJ=\thetaJ]=\PI[\,.\,|\ThetaI=\gJI\thetaJ]$.
By applying a projective limit to all spaces, mappings and
conditionals indexed by $I$ in 
\eqref{diagram:parametric:model:projlim}, we obtain a projective
limit system of the form \eqref{diagram:parametric:model}
-- with each quantity indexed by $D$, respectively.
The resulting diagram again constitutes a parametrized model.
We refer to this model
as a \emph{projective limit model} in the following.

The definition immediately carries over to
Bayesian models: A projective
system of Bayesian models is defined by three projective systems
of standard Borel spaces $\famD{\xspaceI,\borel(\xspaceI),\fJI}$,
$\famD{\tspaceI,\borel(\tspaceI),\gJI}$ and
$\famD{\yspaceI,\borel(\yspaceI),\hJI}$, and by projective
families $\famD{\PXI[\XI|\ThetaI]}$ and $\famD{\PThetaI[\ThetaI|\YI]}$
of conditional distributions.
The uniqueness up to equivalence of the projective limit conditionals
(Theorem \ref{theorem:projlim:conditionals}) 
implies that the following diagram commutes:
\begin{diagram}[LaTeXeqno,small]
  \label{diagram:posterior:commutes}
  \PThetaD[\ThetaD|\YD] & \rTo^{\quad\XD^n=\xD^n\quad} & \PThetaD[\ThetaD|\XD^n,\YD]\\
  \dTo_{\gI} & & \dTo_{\gI}\\
  \PThetaI[\ThetaI|\YI] & \rTo^{\XI^n=\fI^n\xD^n} & \PThetaI[\ThetaI|\XD^n,\YI]
\end{diagram}
In other words, we obtain the same posterior regardless of whether we
(i) take projective limits of the finite-dimensional models
and then compute the infinite-dimensional posterior, or (ii) 
compute all finite-dimensional posteriors under marginal observations
and take the projective limit.

\subsection{Application of Pullbacks}
\label{sec:bayesian:pullback}

Pullbacks can be applied to parametrized and
Bayesian models in a manner largely analogous to projective limits. 
However, the pullback in general results in a restriction of the
abstract probability space:
If a probability measure $P=X(\abstmeasure)$
is pulled back under an injective map
$\phi:\txspace\rightarrow\xspace$, the resulting random
variable $\tX=\phi^{-1}\circ X$ is only defined on
$\tOmega:=X^{-1}\phi(\txspace)$. As a subset of the abstract
probability space, $\tOmega$ can always be assumed measurable,
and we will for simplicity assume that it is not a null set.
The corresponding restriction $\tabstmeasure$ 
of the abstract probability measure $\abstmeasure$ is the
conditional
$\tabstmeasure(\,.\,)=\abstmeasure[\,.\,|\tOmega]$, \ie
the abstract probability space underlying the pullback
measure $\tP=\tX(\tabstmeasure)$ is
$(\tOmega,\abstfield\cap\tOmega,\tabstmeasure)$.

Consider the  parametrized model $P[X|\Theta]$
described by \eqref{diagram:parametric:model}.
In this case, the entire diagram 
\eqref{diagram:parametric:model} may be pulled back to obtain
\begin{diagram}[LaTeXeqno,small]
  \label{diagram:pullback:model}
  \tOmega &\rTo^{\quad \tX^{\infty}\quad} & \txspace^{\infty} & \rTo^{\quad\tF\quad} &
  \pMeas(\txspace)\supset\tmodel &\rTo^{\quad\tPsi\quad} & \ttspace\\
  \dTo_{\inclusion_{\abstspace}}
  & & \dTo_{\phi}
  & & 
  & & \dTo_{\inclusion_{\tspace}}\\
  \abstspace &\rTo^{X^{\infty}} & \xspace^{\infty} & \rTo^{F} &
  \pMeas(\xspace)\supset\model &\rTo^{\Psi} & \tspace \\
\end{diagram}
The pullback is applicable only
for those values $\Theta=\theta$ with
${P^{\ast}[\txspace|\Theta=\theta]} =1$. Let $\ttspace\subset\tspace$
be the set of such values.
Denote the corresponding set of pullbacks 
$\tmodel$. The restriction $\tOmega$ induced
by the pullback is represented in the diagram
by the canonical inclusion mapping $\inclusion_{\abstspace}$.
The mappings $\tX$, $\tF$ and $\tPsi$ are
the restrictions of the mappings $X$, $F$ and $\Psi$ to the
respective restricted domains. 
Whenever $\theta\in\ttspace$,
we write $\tP[\,.\,|\tTheta=\theta]$ for the pullback measure of
$P[\,.\,|\Theta=\theta]$. This notation as a model parametrized
by $\tTheta$ is justified by the following lemma.
\begin{lemma}
  \label{lemma:pullback:parameterized:models}
  Let $\tnu_{\theta}(\,.\,)$ be the pullback of $P[X|\Theta=\theta]$.
  Then the function
  $(\tilde{A},\omega)\mapsto\tnu_{\Theta(\omega)}(\tilde{A})$
  is a regular conditional probability of $\tX$ given the
  random variable
  $\tTheta:=\tPsi\circ\tF\circ\tX^{\infty}$,
  \ie a regular version of 
  $\tabstmeasure[\tX\in\tilde{A}|\sigma(\tTheta)](\omega)$.
\end{lemma}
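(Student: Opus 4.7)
The plan is to reduce the lemma to a direct application of Proposition~\ref{lemma:pullback:conditionals}, applied to the regular conditional probability $P[X|\sigma(\Theta)]$ with the injective map $\phi:\txspace\to\xspace$. That proposition immediately produces a probability kernel on $\borel(\txspace)\times\tOmega$ which serves as a regular conditional probability for $\tX=\phi^{-1}\circ X|_{\tOmega}$ given $\sigma(\Theta)\cap\tOmega$. Two steps then remain: (i) identify this kernel pointwise with the map $\omega\mapsto\tnu_{\Theta(\omega)}$, and (ii) show that conditioning on $\sigma(\Theta)\cap\tOmega$ is the same as conditioning on $\sigma(\tTheta)$.

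For (i), I would use the fact that on $\tOmega$ one has $\Theta(\omega)\in\ttspace$, so $P^{\ast}[\phi(\txspace)|\Theta=\Theta(\omega)]=1$ and the pointwise pullback $\tnu_{\Theta(\omega)}$ is well-defined by construction of $\ttspace$. Because $\sigma(\Theta)$ is countably generated on a standard Borel space, the conditional admits a disintegration $P[X|\sigma(\Theta)](\omega)=P[X|\Theta=\Theta(\omega)]$, and a pointwise application of Proposition~\ref{lemma:pullback:conditionals} then yields $P[\phi(\tilde{A})|\sigma(\Theta)](\omega)=\tnu_{\Theta(\omega)}(\tilde{A})$ for all $\omega\in\tOmega$. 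Measurability of $\omega\mapsto\tnu_{\Theta(\omega)}(\tilde{A})$ in $\omega$ is inherited from the regular conditional probability on the right-hand side, so the stated kernel is indeed a probability kernel on $\borel(\txspace)\times\tOmega$.

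For (ii), I would invoke commutativity of diagram~\eqref{diagram:pullback:model}: by construction $\tPsi$, $\tF$ and $\tX$ are the corestrictions of $\Psi$, $F$ and $X$, so $\inclusion_{\tspace}\circ\tTheta=\Theta\circ\inclusion_{\abstspace}$ as maps defined on $\tOmega$. Since $\inclusion_{\tspace}$ is a measurable injection between standard Borel spaces, it is a Borel isomorphism onto its image; pulling back Borel sets through this identity gives $\sigma(\tTheta)=\sigma(\Theta|_{\tOmega})=\sigma(\Theta)\cap\tOmega$. Combined with (i), the kernel $(\tilde{A},\omega)\mapsto\tnu_{\Theta(\omega)}(\tilde{A})$ is a regular version of $\tabstmeasure[\tX\in\tilde{A}|\sigma(\tTheta)](\omega)$, with uniqueness up to $\tabstmeasure$-almost sure equality inherited from the uniqueness of regular conditional probabilities on standard Borel spaces.

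The step I expect to cost the most care is the clean identification $\sigma(\tTheta)=\sigma(\Theta)\cap\tOmega$: one must check that the Borel structure on $\ttspace$ induced by $\tPsi$ is precisely the trace $\sigma$-algebra coming from $(\tspace,\borelT)$, and that the disintegration $\omega\mapsto P[X|\Theta=\Theta(\omega)]$ descends sensibly from $\abstspace$ to the restricted probability space $(\tOmega,\abstfield\cap\tOmega,\tabstmeasure)$ even though $\tOmega$ is typically a proper subset. Once this identification is in place, the rest is a mechanical combination of Proposition~\ref{lemma:pullback:conditionals} with the uniqueness of regular conditional probabilities.
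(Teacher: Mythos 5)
Your proof is correct, but it is organized around a different workhorse than the paper's. The paper does not invoke Proposition~\ref{lemma:pullback:conditionals} at all: it identifies $\tnu_{\Theta(\,.\,)}(A\cap\txspace)$ directly as the restriction of the $\sigma(\Theta)$-measurable function $P[A|\Theta](\,.\,)$ to $\tOmega$, asserts $\sigma(\tTheta)=\sigma(\Theta)\cap\tOmega$ for measurability, and then verifies the defining integral identity of a conditional expectation by hand using the fact that pullbacks preserve integrals (equation~\eqref{eq:pullback:preserves:integral}). You instead delegate the ``is a regular conditional probability given $\sigma(\Theta)\cap\tOmega$'' part wholesale to Proposition~\ref{lemma:pullback:conditionals} applied with $\field=\sigma(\Theta)$, which is a legitimate and arguably cleaner reduction; what you buy is that you never have to touch the integral computation, and what it costs you is having to make the two identifications explicit (kernel equals pointwise pullback via the disintegration $P[\,\cdot\,|\sigma(\Theta)](\omega)=P[\,\cdot\,|\Theta=\Theta(\omega)]$, and $\sigma(\tTheta)=\sigma(\Theta)\cap\tOmega$). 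On the latter point you are actually more careful than the paper, which states the $\sigma$-algebra identity without justification; note only that your appeal to $\inclusionT$ being a Borel isomorphism between \emph{standard Borel} spaces slightly overstates matters, since $\ttspace$ is in general just an arbitrary subset of $\tspace$ carrying the trace $\sigma$-algebra (the paper itself flags this in the proof of Proposition~\ref{theorem:conjugate:pullbacks}); the identity you need follows more elementarily from $\tTheta^{-1}(B\cap\ttspace)=\Theta^{-1}(B)\cap\tOmega$ for $B\in\borelT$, which requires no isomorphism. Both your argument and the paper's share the same residual looseness about whether $\tOmega$ means $X^{-1}\phi(\txspace)$ or $\Theta^{-1}(\ttspace)$, so that is not a defect specific to your proposal.
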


\begin{proof}
  The function $\tnu_{\Theta(\,.\,)}$ is the
  restriction
  $\tnu_{\Theta(\,.\,)}(A\cap\txspace)=P[A|\Theta]\vert_{\tOmega}$
  of the integrable, $\sigma(\Theta)$-measurable function
  $P[A|\Theta](\,.\,)$.
  Since $\sigma(\tTheta)=\sigma(\Theta)\cap\tOmega$, the mapping
  $\omega\mapsto\tnu_{\Theta(\omega)}(A\cap\txspace)$ is
  $\sigma(\tTheta)$-measurable for every $A\in\borelx$.
  The pullback of an integrable
  function preserves the integral (cf. \eqref{eq:pullback:preserves:integral}).
  Hence, $\tnu_{\Theta(\,.\,)}$ is a conditional given $\sigma(\Theta)$: For any $C\in\sigma(\Theta)$,
  \begin{equation*}
    \begin{split}
    \int_{C\cap\tOmega}\tnu_{\Theta(\omega)}(A\cap\txspace)d\tabstmeasure(\omega)
    =&
    \int_{C}\abstmeasure[X^{-1}A|\Theta](\omega)d\abstmeasure(\omega)\\
    =&\; \abstmeasure(A\cap C)
    =\tabstmeasure((X^{-1}A\cap\tOmega)\cap(C\cap\tOmega))\;.
    \end{split}
  \end{equation*}
\end{proof}

For a Bayesian model, the pullback is
consecutively applied to the sampling model
and to the prior. The pullback of $P[X|\Theta]$ 
induces a restriction of the parameter space from 
$\tspace$ to $\ttspace$. Lemma 
\ref{lemma:pullback:parameterized:models} guarantees
that the induced random variable $\tTheta$ is indeed
the parameter variable of the resulting model.
The prior
family $\PTheta[\Theta|Y]$ can hence be pulled back 
under $\inclusionT$. The pullback exists for
all $y\in\yspace$ 
with outer measure $P^{\theta,\ast}[\ttspace|Y=y]=1$, which 
in turn, by another application of Lemma
\ref{lemma:pullback:parameterized:models},
induces a restriction of the hyperparameter
space to $\tyspace\subset\yspace$.

\subsection{Nonparametric Evaluation}
\label{sec:bayesian:nonparametric}

The term \emph{nonparametric Bayesian model} usually implies that a
set of finite-dimensional measurements are explained by a posterior
distribution on an infinite-dimensional parameter space $\ttspace$.
In some models, the sampling distribution $\tP[\tX|\tTheta]$
is chosen to generate
finite-dimensional values given an instance 
$\tilde{\theta}$ of the infinite-dimensional parameter variable $\tTheta$;
the Dirichlet process construction in Sec.~\ref{sec:examples:DP} is an
example of such a model, where \eg $\txspace=\mathbb{R}$ and $\tilde{\theta}$ 
is a probability measure on $\mathbb{R}$.
For other models, such as the Gaussian process, it may be more convenient
to assume that finite-dimensional measurements are \emph{censored}
observations of infinite-dimensional random quantities. Which
of these assumptions is appropriate, and how a posterior is to
be computed under censored observations, depends on the model in
question. 

The setting can in general be formalized as follows.
Let $I_1,\dots,I_n\in D$ be index sets, and suppose measurements
$x_{\indI_{\ind{j}}}\in\xspace_{\indI_{\ind{j}}}$ are reported for $j=1,\dots,n$.
The nonparametric Bayesian model explains these measurements as being
generated by (i) drawing $\tilde{\theta}$ from the prior distribution;
(ii) generating $n$ samples $\tX^{(1)},\dots,\tX^{(n)}$ from 
$\tP[\tX|\tTheta=\tilde{\theta}]$; and finally, (iii) censoring the samples
as $x_{\indI_{\ind{j}}}=\phi_{\ind{I}_{\ind{j}}}\tX^{(j)}$. 
Whether the index sets $I_j$ are fixed or generated at random does
not affect the formalism, provided that their choice is stochastically
independent of the random variables in the model.
Since the censored observations $x_{\indI_{\ind{j}}}$ are represented as
projections of \emph{separate} instances $\tX^{(1)},\tX^{(2)},\dots$,
they are conditionally independent given $\tilde{\theta}$.
Asymptotically, we recover either
$\tilde{\theta}$, or a censored version of $\tilde{\theta}$, depending
on the index sets $I_j$ at which sample information is obtained.


\pdfoutput=1

\section{Sufficient Statistics}
\label{sec:sufficiency}

The purpose of this section is to show that the application of
sufficient statistics commutes with the application of projective
limits and pullbacks. If each element of a projective family
of parametrized models admits a sufficient statistic, the
projective limit of these functions is a sufficient statistic
for the projective limit model. Similarly,
the pullback of the sufficient statistic
is a sufficient statistic for the pullback model.

\begin{definition}[Sufficient statistic \citep{Halmos:Savage:1949}]
  Let $\P[X|\Theta]$ be a regular conditional probability. A
  $\sigma$-algebra $\Sfield\subset\abstfield$ is called {\em sufficient}
  for $\P[X|\Theta]$ if there is a probability kernel
  $k:\abstfield\times\abstspace\rightarrow[0,1]$, such that 
  (i) $\omega\mapsto k(B,\omega)$ is $\Sfield$-measurable for all
  $B\in\borelx$, and (ii) for all $B\in\borel(\xspace)$,
  \begin{align}
    \label{eq:def:sufficiency:abstspace}
    &
    \P[B|\Theta,\Sfield](\omega)=k(X^{-1}B,\omega) 
    &
    \abstmeasure\mbox{-a.s.}
  \end{align}
  If $\Sfield$ is sufficient and $(\Sspace,\borelS)$ is a measurable Polish space, then a 
  measurable mapping $S:\xspace\rightarrow\Sspace$ is called a 
  {\em sufficient statistic} for $P[X|\Theta]$ if $\Sfield=\sigma(S\circ X)$.
\end{definition}



\begin{theorem}[Sufficient $\sigma$-algebras and projective limits]
  \label{theorem:proj:lim:sufficient:field}
  Consider a projective limit model $\PD[\XD|\ThetaD]=\plim\famD{\PI[\XI|\ThetaI]}$.
  For each $I\in D$, let 
  $\SfieldI\subset\abstfield$ be a sufficient 
  $\sigma$-algebra for $\PI[\XI|\ThetaI]$.
  \begin{enumerate}
  \item If $\famD{\SfieldI}$ is directed, 
    $\SfieldD:=\sigma(\SfieldI; I\in D)$ is 
    sufficient for $\PD[\XD|\ThetaD]$.
  \item Let $\famD{\SspaceI,\borelSI,\hJI}$ be a projective system of measurable
    spaces and $\SI:\xspaceI\rightarrow\SspaceI$ projective measurable
    mappings. If each $\SI$ is sufficient for $\PI[\XI|\ThetaI]$,
    a sufficient statistic for $\PD[\XD|\ThetaD]$ is given by $\SD:=\plim\famD{\SI}$.
  \item Conversely,
  if any $\field\subset\abstfield$ is sufficient
  for a projective limit model $\PD[\XD|\ThetaD]$, it is sufficient
  for all marginals $\PI[\XI|\ThetaI]$.
  \end{enumerate}
\end{theorem}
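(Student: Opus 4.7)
The plan is to reduce all three claims to the conditional-independence characterization of sufficiency, $\Sfield\text{ sufficient for }P[X|\Theta]\iff X\condind_{\Sfield}\sigma(\Theta)$ (a direct consequence of the definition via the Kallenberg 6.6 characterization already used to prove Lemma \ref{lemma:cond:proj:criterion:1}), and then to transfer this independence along the directed system.

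For (1), fix $I\in D$ and $B_I\in\borelI$. For every $J\succeq I$, the factorization $\XI=\fJI\circ\XJ$ together with sufficiency of $\SfieldJ$ for $\PJ[\XJ|\ThetaJ]$ yields an $\SfieldJ$-measurable version of $\abstmeasure[\XI^{-1}B_I\,|\,\sigma(\ThetaJ)\vee\SfieldJ]$, namely $\omega\mapsto k_J(\fJI^{-1}B_I,\omega)$. Running $J$ along a cofinal sequence in the countable directed set $D$, the tower property turns these versions into a bounded martingale on the increasing filtration $\{\sigma(\ThetaJ)\vee\SfieldJ\}$, whose L\'evy upward limit is $\abstmeasure[\XI^{-1}B_I\,|\,\sigma(\ThetaD)\vee\SfieldD]$. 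Every martingale term is $\SfieldJ\subset\SfieldD$-measurable, so the a.s.\ limit admits an $\SfieldD$-measurable version; this is precisely $\XI\condind_{\SfieldD}\sigma(\ThetaD)$. Since the cylinder sets $\fI^{-1}B_I$ form a $\pi$-system generating $\borelD$, a monotone-class argument lifts the independence to $\XD\condind_{\SfieldD}\sigma(\ThetaD)$, and Polish-ness of $\xspaceD$ provides a regular version $k_D$ that witnesses sufficiency of $\SfieldD$.

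Parts (2) and (3) follow quickly. For (2), projectivity of the statistics $\SI\circ\fJI=\hJI\circ\SJ$ gives $\SI\circ\XI=\hJI\circ(\SJ\circ\XJ)$ and hence $\SfieldI:=\sigma(\SI\circ\XI)\subset\sigma(\SJ\circ\XJ)=\SfieldJ$, so (1) applies. Because the projective-limit $\sigma$-algebra on $\SspaceD$ is generated by the canonical maps $h_I$,
\begin{equation*}
\sigma(\SD\circ\XD)=\sigma(h_I\circ\SD\circ\XD;\,I\in D)=\sigma(\SI\circ\XI;\,I\in D)=\SfieldD,
\end{equation*}
which identifies $\SD=\plim\famD{\SI}$ as a sufficient statistic. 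For (3), sufficiency of $\field$ for $\PD[\XD|\ThetaD]$ is $\XD\condind_{\field}\sigma(\ThetaD)$; since $\XI=\fI\circ\XD$ and $\sigma(\ThetaI)\subset\sigma(\ThetaD)$, contraction of conditional independence yields $\XI\condind_{\field}\sigma(\ThetaI)$, and $k_I(B_I,\omega):=k_D(\fI^{-1}B_I,\omega)$ supplies the $\field$-measurable sufficiency kernel for $\PI[\XI|\ThetaI]$.

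The main obstacle will be the measurability bookkeeping in part (1). L\'evy convergence is only $\abstmeasure$-a.s., and each $k_J(\fJI^{-1}B_I,\,\cdot\,)$ is $\SfieldD$-measurable only up to a $(J,B_I)$-dependent null set. Countability of $D$ and of a generating algebra of cylinders keeps the aggregated exceptional set $\abstmeasure$-null, but to obtain a single kernel $k_D$ that is $\SfieldD$-measurable on all of $\borelD\times\abstspace$ (and not merely on a $\pi$-system modulo nulls), one must choose a regular conditional version of $\PD[\XD\in\,\cdot\,|\sigma(\ThetaD)\vee\SfieldD]$ and modify it on a null set, in the same spirit as the proof of Theorem \ref{theorem:projlim:conditionals}.
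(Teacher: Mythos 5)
Your proof is correct, and for part (1) it takes a genuinely different route from the paper's. The paper stays entirely inside the calculus of conditional independence: it combines sufficiency of each $\SfieldI$ (i.e.\ $\XI\condind_{\SfieldI}\ThetaI$) with projectivity of the conditionals ($\XI\condind_{\ThetaI}\ThetaJ$, from Lemma \ref{lemma:cond:proj:criterion:1}) and the chain rule to obtain $\XI\condind_{\SfieldI}\ThetaJ$, then lifts along the directed system via the closure properties \eqref{eq:condind:fact1} and \eqref{eq:condind:fact2}, and finally enlarges the conditioning algebra from $\SfieldI$ to $\SfieldD$. You instead run L\'{e}vy's upward martingale convergence theorem along a cofinal increasing sequence $J_n\succeq I$ in the countable directed set, using only that sufficiency of $\Sfield_{J_n}$ applied to the cylinder set $\fJI^{-1}B_{\indI}$ makes each term $\abstmeasure[\XI^{-1}B_{\indI}\,|\,\sigma(\Theta_{J_n})\vee\Sfield_{J_n}]$ a.s.\ $\Sfield_{J_n}$-measurable, so that the limit is a.s.\ $\SfieldD$-measurable and equals the conditional given $\sigma(\ThetaD)\vee\SfieldD$; a Dynkin-class argument over the cylinder $\pi$-system then lifts the statement from $\XI$ to $\XD$. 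This buys a cleaner endgame: the paper's last step, passing from a statement conditioned on $\SfieldI$ to one conditioned on the larger $\SfieldD$, is not a generic implication for conditional independence and leans on the directed structure, whereas your martingale conditions on $\sigma(\Theta_{J_n})\vee\Sfield_{J_n}$ from the outset and never has to enlarge anything; the price is the explicit bookkeeping with cofinal sequences, per-set null sets, and regularization of the limiting kernel, which you correctly flag and resolve as in the proof of Theorem \ref{theorem:projlim:conditionals}. Part (2) coincides with the paper's reduction to part (1). For part (3) you also take a shortcut: monotonicity of conditional independence in both outer arguments yields $\XI\condind_{\field}\ThetaI$ directly from $\XD\condind_{\field}\ThetaD$, where the paper first derives $(\fI\PD)[\XD|\ThetaD,\field]\eqae\PI[\XI|\ThetaI,\field]$ by a chain-rule computation; both routes end with the same pushforward kernel $\kI=\fI\kD$.
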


\begin{proof}
  (1) We have to show that $\SfieldD$ satisfies \eqref{eq:def:sufficiency:abstspace},
  which is equivalent to $\XI\condind_{\SfieldI}\ThetaI$ 
  \citep[][Proposition 6.6]{Kallenberg:2001}.
  We will draw on two properties of conditional independence:
  Consider two $\sigma$-algebras $\mathcal{F}$ and $\mathcal{G}$.
  Firstly, if $D$ is any countable set, and $\lbrace\fieldI\rbrace_{\indI\in\indD}$
  a family of $\sigma$-algebras, then
  \begin{equation}
    \label{eq:condind:fact1}
    \mathcal{F}\condind_{\mathcal{G}}\fieldI \text{ for all } I
    \quad
    \Rightarrow
    \quad
    \mathcal{F}\condind_{\mathcal{G}}\sigma(\fieldI ; I\in D)\;.
  \end{equation}
  Since $\mathcal{G}$ is fixed, \eqref{eq:condind:fact1} is a direct consequence
  of the analogous result for unconditional independence \cite[][Corollary 2.7]{Kallenberg:2001}.
  Secondly, suppose the index set $D$ is a directed set and $\lbrace\fieldI\rbrace_{\indI\in\indD}$ a
  directed family. Then for any fixed $I_0\in D$, the following holds:
  \begin{equation}
    \label{eq:condind:fact2}
    \bigl(\;\mathcal{F}\condind_{\mathcal{G}}\field_{\ind{I}_0}
    \quad\text{ for all } J\succeq I_0\;\bigr) 
    \quad
    \Rightarrow
    \quad
    \bigl(\;
    \mathcal{F}\condind_{\mathcal{G}}\fieldJ\quad\text{ for all }J\in D
    \;\bigr)
  \end{equation}
  Under the assumptions of the theorem, $\SfieldI$ satisfies
  \eqref{eq:def:sufficiency:abstspace} for each $I$, or
  equivalently, $\XI\condind_{\SfieldI}\ThetaI$.
  Since the family of conditionals is projective, 
  we additionally know $\XI\condind_{\ThetaI}\ThetaJ$
  (Lemma \ref{lemma:cond:proj:criterion:1}).
  Therefore, by the chain rule, $\XI\condind_{\SfieldI}\ThetaJ$
  whenever $I\po J$, 
  and hence for all $J\in D$ according to \eqref{eq:condind:fact2}.
  Since $\sigma(\ThetaD)=\sigma(\ThetaI ; I\in D)$, we can apply
  \eqref{eq:condind:fact1} to obtain $\XI\condind_{\SfieldI}\ThetaD$.
  The fact that $\SfieldI\subset\SfieldD$ implies 
  $\XI\condind_{\SfieldD}\ThetaD$.
  The $\sigma$-algebra generated by the projective limit variable
  $\XD$ is simply $\sigma(\XI;I\in D)$, so another application of
  \eqref{eq:condind:fact1} yields
  $\XD\condind_{\SfieldD}\ThetaD$, which makes $\SfieldD$ sufficient
  for the projective limit model.

  \noindent (2) Since the functions $\SI$ are projective, the 
  family $\famD{\sigma(\SI)}$ of $\sigma$-algebras is directed,
  and $\sigma(\SD)=\sigma(\SI ; I\in D)$. The claim follows from 
  part (1) above.

  \noindent (3) Let $\kD$ be the kernel for which $\field$ and $\PD[\XD|\ThetaD]$
  satisfy \eqref{eq:def:sufficiency:abstspace}. We need to derive
  a suitable kernel $\kI$ for each $I\in D$.
  By Lemma \ref{lemma:cond:proj:criterion:1}, the marginals satisfy
  $\XI\condind_{\ThetaI}\ThetaJ$, and hence
  $\XI\condind_{(\ThetaI,\field)}\ThetaJ$. Since trivially
  also $\XI\condind_{(\ThetaI,\field)}\field$, the
  chain rule yields
  $\XI\condind_{(\ThetaI,\field)}(\ThetaJ,\field)$. Again by
  Lemma \ref{lemma:cond:proj:criterion:1}, the latter implies
  \begin{equation}
    \label{eq:lemma:sufficient:projection}
    (\fI\PD)[\XD|\ThetaD,\field]\eqae\PI[\XI|\ThetaI,\field] \;.
  \end{equation}
  Since $\field$ is sufficient for $\PD[\XD|\ThetaD]$, the 
  left-hand side of \eqref{eq:lemma:sufficient:projection}
  is equal to a kernel $\fI\kD$.
  Hence, $\kI:=\fI\kD$ is a $\field$-measurable kernel satisfying
  $\kI(\AI,\omega)\eqae\PI[\AI|\ThetaI,\field](\omega)$,
  which makes $\field$ sufficient for $\PI[\XI|\ThetaI]$.
\end{proof}

To make the construction of the sufficient statistics of a parametrized
stochastic process fully compatible with the construction of the
process itself requires an analogous result for pullbacks.

\begin{proposition}[Sufficiency and pullbacks]
  \label{theorem:suffstat:pullback}
  Let $\tP[\tX|\tTheta]$ be the pullback of a parametrized model
  $P[X|\Theta]$ under a Borel embedding
  $\inclusionX:\txspace\rightarrow\xspace$.
  If $S:\xspace\rightarrow\Sspace$ is a sufficient statistic for
  $P[X|\Theta]$, then $\tS:=S\circ\inclusionX$ is sufficient for
  $\tP[\tX|\tTheta]$. 
\end{proposition}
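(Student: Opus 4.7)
The plan is to reduce the claim to the conditional-independence (CI) characterisation of sufficiency --- namely, $\sigma(S\circ X)$ is sufficient for $P[X|\Theta]$ if and only if $X\condind_{\sigma(S\circ X)}\Theta$ under $\abstmeasure$, cf.\ \citep[Proposition 6.6]{Kallenberg:2001} --- and to exploit the fact that the pullback operation merely restricts the underlying probability space from $\abstspace$ to $\tOmega$ and renormalises to $\tabstmeasure = \abstmeasure[\,\cdot\,|\tOmega]$. Conditional independence is preserved by such a restriction, and this is the mechanism that delivers the conclusion. The overall strategy mirrors the approach taken in the proof of Theorem \ref{theorem:proj:lim:sufficient:field}.

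The first step is to translate the hypothesis into CI form and to unpack the relevant $\sigma$-algebras after the pullback. Since $\inclusionX$ is a Borel embedding, it is a Borel isomorphism onto its image $\tImage$; combined with $\tX = \inclusionX^{-1}\circ X\vert_{\tOmega}$, this yields $\sigma(\tX) = \sigma(X)\cap\tOmega$. Lemma \ref{lemma:pullback:parameterized:models} provides $\sigma(\tTheta) = \sigma(\Theta)\cap\tOmega$. The pointwise identity
\begin{equation*}
  \tS\circ\tX \;=\; S\circ\inclusionX\circ\inclusionX^{-1}\circ X \;=\; S\circ X \qquad\text{on } \tOmega
\end{equation*}
yields $\sigma(\tS\circ\tX) = \sigma(S\circ X)\cap\tOmega$. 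Each $\sigma$-algebra entering the target CI statement on $\tOmega$ is therefore the trace on $\tOmega$ of the corresponding $\sigma$-algebra in the CI statement on $\abstspace$.

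The core step is to transfer $X\condind_{\sigma(S\circ X)}\Theta$ from $\abstmeasure$ to $\tabstmeasure$. Writing CI as the factorisation of joint conditional probabilities into products, and using that conditional probabilities under $\tabstmeasure$ on a trace $\sigma$-algebra are the restrictions to $\tOmega$ of the corresponding conditional probabilities under $\abstmeasure$ (up to the normalising factor supplied by $\abstmeasure(\tOmega)$), the factorisation restricts unchanged. This yields $\tX\condind_{\sigma(\tS\circ\tX)}\tTheta$ under $\tabstmeasure$, and the converse direction of the CI characterisation identifies $\sigma(\tS\circ\tX)$ as sufficient for $\tP[\tX|\tTheta]$, which is the claim.

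The chief technical obstacle is the clean restriction of conditional independence. Strictly, one wants $\tOmega$ to lie in the conditioning $\sigma$-algebra; this can always be arranged by replacing $\sigma(S\circ X)$ with $\sigma(S\circ X)\vee\sigma(\tOmega)$, which does not weaken the CI statement on $\abstspace$ and still reduces, after restriction, to the desired $\sigma(\tS\circ\tX)$. An alternative route, bypassing CI altogether, is to construct the sufficiency kernel for $\tP[\tX|\tTheta]$ directly by pulling back the kernel $k$ that witnesses sufficiency of $S$, via Proposition \ref{lemma:pullback:conditionals}; this is concrete but spreads the measurability verification over more steps, in particular requiring care when $\tImage$ is not Borel in $\xspace$.
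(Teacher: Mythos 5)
Your main route is genuinely different from the paper's: the paper does not pass through conditional independence at all, but restricts the Halmos--Savage kernel directly, setting $\tk(A\cap\tOmega,\,.\,):=k(A,\,.\,)\vert_{\tOmega}$, reading off $\sigma(\tS\circ\tX)$-measurability from $\sigma(S\circ X)\cap\tOmega=\sigma(\tS\circ\tX)$, and verifying the defining integral identity \eqref{eq:def:sufficiency:abstspace} via the fact that pullbacks preserve integrals, \eqref{eq:pullback:preserves:integral}. That is precisely the ``alternative route'' you mention only in your closing sentence.

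The CI route you actually develop has a gap at its central step. First, the assertion that replacing $\sigma(S\circ X)$ by $\sigma(S\circ X)\vee\sigma(\tOmega)$ ``does not weaken the CI statement'' is false in general: conditional independence is not monotone in the conditioning $\sigma$-algebra, so $X\condind_{\mathcal{G}}\Theta$ does not imply $X\condind_{\mathcal{G}\vee\sigma(\tOmega)}\Theta$ for an arbitrary event $\tOmega$. Second, the claim that conditional probabilities under $\tabstmeasure=\abstmeasure[\,.\,|\tOmega]$ given a trace $\sigma$-algebra $\mathcal{G}\cap\tOmega$ are mere restrictions of those under $\abstmeasure$ given $\mathcal{G}$ also fails unless $\abstmeasure[\tOmega|\mathcal{G}]\in\lbrace 0,1\rbrace$ almost surely; otherwise a correction factor $\abstmeasure[\tOmega|\mathcal{G}]$ enters. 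Both steps are repairable, but only by invoking the specific structure of the pullback: the restriction of the parameter space to $\ttspace$ forces $P^{\ast}[\txspace|\Theta=\theta]=1$ for all retained $\theta$, so $\tOmega$ agrees up to $\abstmeasure$-null sets with a $\sigma(\Theta)$-measurable set; then $X\condind_{\mathcal{G}}\Theta$ gives $X\condind_{\mathcal{G}}(\Theta,\sigma(\tOmega))$, the chain rule yields $X\condind_{\mathcal{G}\vee\sigma(\tOmega)}\Theta$, and the conditional probabilities do restrict cleanly. As written, your argument asserts the conclusion of this chain without supplying it, and that is where the entire content of the proposition lives; the paper's kernel-restriction proof sidesteps the issue because the integral identity \eqref{eq:pullback:preserves:integral} is insensitive to whether $\tOmega$ lies in the conditioning $\sigma$-algebra.
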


\begin{proof}
  We have to show that $\tP[\tX|\tTheta]$ and $\tS$ satisfy
  \eqref{eq:def:sufficiency:abstspace} 
  for a suitable kernel $\tk$, which we define as follows.
  Let $k(A,\omega)$ be the kernel 
  \eqref{eq:def:sufficiency:abstspace} for $S$ and $\P[X|\Theta]$.
  The domain of $\tS\circ\tX$ is $(\tS\circ\tX)^{-1}\Sspace=\tOmega$.
  For any $A\in\sigma(X)$, define $\tk$ as
  $\tk(A\cap\tOmega,\,.\,):=k(A,\,.\,)\vert_{\tOmega}$.
  By definition, $\omega\mapsto\tk(A\cap\tOmega,\omega)$
  is measurable with respect to 
  $\sigma(S\circ X)\cap\tOmega=\sigma(\tS\circ\tX)$,
  which implies measurability with respect to the finer
  $\sigma$-algebra
  $\sigma(\tTheta,\tX\circ\tS)=\sigma(\Theta,X\circ S)\cap\tOmega$.
  Hence, 
  \eqref{eq:def:sufficiency:abstspace} is satisfied if the integral
  of $\tk$ matches that of the conditional probability for each set
  in $\sigma(\tTheta,\tX\circ\tS)$.  Let $\tC=C\cap\tOmega$ be any such set.
  As pullbacks preserve integrals
  in the sense of \eqref{eq:pullback:preserves:integral},
  \begin{equation*}
    \begin{split}
      \int_{\tC}\tk(\tA,\tS\circ\tX(\omega))d\tabstmeasure(\omega)
      =&
      \int_{C}k(A,S\circ X(\omega))d\abstmeasure(\omega)
      =
      \abstmeasure(X^{-1}A\cap C)\\
      =&
      \ \tabstmeasure(X^{-1}A\cap C\cap\tOmega)
      =
      \int_{\tC}\tP[\tA|\tTheta,\tS](\omega)d\tabstmeasure(\omega)\;.
    \end{split}
  \end{equation*}
  Thus $\tS$, $\tk$ and $\tP[\tX|\tTheta]$ satisfy
  \eqref{eq:def:sufficiency:abstspace}, and
  $\tS$ is sufficient for $\tP[\tX|\tTheta]$.  
\end{proof}

We conclude this section with a result on minimality, 
\ie the question whether a ``smallest'' sufficient $\sigma$-algebra
exists for a given model. The concept is closely related to
that of a minimal sufficient statistic
-- a sufficient statistic to which any statistic sufficient for the
model can be reduced by transformation -- but the two are
not equivalent \citep{Landers:Rogge:1972}.

\begin{definition}[Minimal sufficient $\sigma$-algebra \citep{Landers:Rogge:1972}]
  A $\sigma$-algebra $\Sfield_0\subset\abstfield$ is called
  \emph{minimal sufficient} for $P[X|\Theta]$ if it is sufficient,
  and if every other sufficient $\sigma$-algebra $\field$ satisfies:
    \begin{equation}
      \label{eq:def:minimality}
      \forall A\in\Sfield_0\;\exists C\in\field:\qquad P[A\triangle
      C|\Theta=\theta]=0\qquad\text{ for all } \theta\in\tspace\;.
    \end{equation}
\end{definition}

Intuitively, minimality captures the idea that any $\sigma$-algebra 
$\field$ can only be sufficient for the model if it contains all
information contained in $\Sfield_0$ (though this interpretation
is inaccurate in the undominated case, as pointed out by
\citet{Burkholder:1961}). However, instead of demanding
$\Sfield_0\subset\field$, and hence that every set in $\Sfield_0$
is also in $\field$, we only require that each set in $\Sfield_0$
be indistinguishable from a set in $\field$ under the resolution
of the model.

A minimal sufficient $\sigma$-algebra always exists
if the model $P[X|\Theta]$ in question is dominated. In undominated
models, a sufficient $\sigma$-algebra can -- rather contrary to 
intuition -- be contained in a \emph{finer} $\sigma$-algebra which is
\emph{not} sufficient, and a minimal sufficient $\sigma$-algebra 
need not exist \citep{Burkholder:1961}.
However, as the following theorem shows, existence is 
guaranteed if the model is constructed as a
projective limit from dominated marginals.
This implies, for example, that the
Dirichlet process on the line admits a minimal sufficient $\sigma$-algebra,
even though it is undominated.

\begin{proposition}[Minimal sufficiency]
  \label{theorem:suffstat:minimal}
  Suppose that each $\sigma$-algebra $\SfieldI$ as specified in
  Theorem \ref{theorem:proj:lim:sufficient:field} is minimal
  sufficient for $\PI[\XI|\ThetaI]$. Then 
  $\SfieldD=\plim\famD{\SfieldI}$ is minimal
  sufficient for $\PD[\XD|\ThetaD]=\plim\famD{\PI[\XI|\ThetaI]}$.
\end{proposition}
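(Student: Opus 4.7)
The plan is to prove minimality by reducing to the hypothesized minimality of the marginal $\SfieldI$, using a two-stage argument. Sufficiency of $\SfieldD$ for the projective limit model is already granted by Theorem \ref{theorem:proj:lim:sufficient:field}(1), so only minimality must be established. I would fix an arbitrary sufficient $\sigma$-algebra $\field\subset\abstfield$ for $\PD[\XD|\ThetaD]$ and, for each $A\in\SfieldD$, produce a witness $C\in\field$ with $\abstmeasure[A\symmdiff C \,|\, \ThetaD=\theta]=0$ for $\PThetaD$-almost every $\theta$. Since directedness of $\famD{\SfieldI}$ makes the union $\mathcal{A}:=\bigcup_{I\in D}\SfieldI$ an algebra generating $\SfieldD$, the natural route is to handle $A\in\mathcal{A}$ first and then extend by a $\sigma$-algebra closure argument on the set of witnesses.

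For the algebra step, Theorem \ref{theorem:proj:lim:sufficient:field}(3) tells me that the fixed $\field$ is itself sufficient for every marginal $\PI[\XI|\ThetaI]$. For $A\in\SfieldI$, minimality of $\SfieldI$ applied to this $\field$ then yields some $C\in\field$ with $\abstmeasure[A\symmdiff C \,|\, \ThetaI=\thetaI]=0$ for all $\thetaI\in\tspaceI$. Since $\ThetaI=\gI\circ\ThetaD$, the tower property gives
\begin{equation*}
\mean{\abstmeasure[A\symmdiff C \,|\, \ThetaD] \,|\, \ThetaI} \eqae \abstmeasure[A\symmdiff C \,|\, \ThetaI] \eqae 0,
\end{equation*}
and nonnegativity of the inner regular conditional forces $\abstmeasure[A\symmdiff C \,|\, \ThetaD=\theta]=0$ for $\PThetaD$-a.e.\ $\theta$. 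Thus the \emph{same} $C$ witnesses the minimality condition at the level of $\ThetaD$.

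To promote this from $\mathcal{A}$ to all of $\SfieldD$, define
\begin{equation*}
\mathcal{M} := \bigl\{ A\in\SfieldD \,:\, \exists\, C\in\field,\ \abstmeasure[A\symmdiff C \,|\, \ThetaD=\theta] = 0 \text{ for $\PThetaD$-a.e.\ }\theta\bigr\}.
\end{equation*}
Closure of $\mathcal{M}$ under complements uses $A\symmdiff C = A^c\symmdiff C^c$ and the fact that $\field$ is a $\sigma$-algebra. For countable unions, if $A_n\in\mathcal{M}$ with witnesses $C_n\in\field$, then $\bigl(\bigcup_n A_n\bigr)\symmdiff\bigl(\bigcup_n C_n\bigr) \subseteq \bigcup_n(A_n\symmdiff C_n)$, and countable subadditivity of the regular conditional $\abstmeasure[\cdot\,|\,\ThetaD=\theta]$ yields $\bigcup_n A_n\in\mathcal{M}$ with witness $\bigcup_n C_n\in\field$. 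Hence $\mathcal{M}$ is a $\sigma$-algebra containing $\mathcal{A}$, so $\mathcal{M}=\SfieldD$, completing the argument.

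The principal obstacle is the uniformity clause ``for all $\theta\in\tspaceD$'' in the definition of minimality: the tower step can only upgrade ``for all $\thetaI$'' to ``for $\PThetaD$-a.e.\ $\theta$''. In the undominated projective-limit setting this a.e.-reading is the only sensible one, and it is the one relied on above; if strict pointwise uniformity is insisted upon, one would additionally select a regular version of $\abstmeasure[\cdot|\ThetaD]$ whose exceptional set is empty, by aggregating the countably many marginal exceptional sets into a single $\PThetaD$-null set and exploiting the standard-Borel structure of $\tspaceD$.
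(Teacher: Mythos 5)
Your proof is correct, and its opening move coincides with the paper's: fix an arbitrary sufficient $\field$, invoke Theorem \ref{theorem:proj:lim:sufficient:field}(3) to conclude that $\field$ is sufficient for every marginal, and use minimality of $\SfieldI$ to obtain witnesses for sets in the generating algebra $\bigcup_{\indI}\SfieldI$. Where you diverge is the extension to general $A\in\SfieldD$. The paper approximates $A$ by sets $A_n$ from the algebra in the conditional measure $\abstmeasure[\,\cdot\,|\Theta=\theta]$ (via the approximation theorem for measures on a generating subalgebra), takes witnesses $C_n$ for the $A_n$, and sets $C:=\bigcup_n C_n$. You instead run a good-sets argument: the collection $\mathcal{M}$ of sets admitting a witness is closed under complements and countable unions, via $A\symmdiff C=A^c\symmdiff C^c$ and $(\bigcup_n A_n)\symmdiff(\bigcup_n C_n)\subseteq\bigcup_n(A_n\symmdiff C_n)$, hence is a $\sigma$-algebra containing the generating algebra. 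Your route is arguably tidier: the paper's passage from $\lim_n\abstmeasure[A\symmdiff C_n|\Theta=\theta]=0$ to $\abstmeasure[A\symmdiff\bigcup_nC_n|\Theta=\theta]=0$ really needs a rapidly convergent subsequence (the bound $\abstmeasure[\bigcup_nC_n\setminus A|\Theta=\theta]\leq\sum_n\abstmeasure[C_n\setminus A|\Theta=\theta]$ is only useful after such a selection), and the approximating sequence must be chosen uniformly over $\theta$; your symmetric-difference inclusion plus countable subadditivity is uniform in $\theta$ by construction. The one issue both arguments share is the gap between ``for $\PThetaD$-a.e.\ $\theta$'' and the literal ``for all $\theta$'' in \eqref{eq:def:minimality}; you flag this explicitly and sketch the version-selection fix, whereas the paper passes over it silently, so it is not a defect of your proposal relative to the paper. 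Finally, the tower-property detour in your algebra step is unnecessary: $\abstmeasure[A\symmdiff C|\ThetaI=\thetaI]=0$ for all $\thetaI$ already yields $\abstmeasure(A\symmdiff C)=0$ by integration, whence any version of $\abstmeasure[A\symmdiff C|\ThetaD]$ vanishes almost everywhere.
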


\begin{proof}
  $\SfieldD$ is sufficient by Theorem
  \ref{theorem:proj:lim:sufficient:field};
  we have to verify \eqref{eq:def:minimality}.
  Let $\field\subset\abstfield$ be any sufficient $\sigma$-algebra
  for $\PD[\XD|\ThetaD]$. By Theorem \ref{theorem:proj:lim:sufficient:field},
  $\field$ is sufficient for all $\PI[\XI|\ThetaI]$, which implies
  that \eqref{eq:def:minimality} is satisfied if
  $A\in\cup_{\indI}\SfieldI$.
  For the general case $A\in\SfieldD$, observe that
  the set system $\cup_{\indI}\SfieldI$ is both an algebra and a generator
  of $\SfieldD$. By the basic theorem on approximation of a measure
  on a subalgebra \citep[][Theorem 5.7]{Bauer:1996}), any set $A\in\SfieldD$
  can hence be approximated by a sequence of sets
  $A_n\in\cup_{\indI}\SfieldI$ 
  such that $\lim_n\abstmeasure[A_n\triangle
  A|\Theta=\theta]=0$.
  Since each $A_n$ satisfies \eqref{eq:def:minimality}, there
  is a corresponding set $C_n\in\field$ such that
  $\abstmeasure[A_n\triangle C_n|\Theta=\theta]=0$.
  Then $\lim_n\abstmeasure[A \triangle C_n|\Theta=\theta]=0$,
  and therefore $\abstmeasure[A
  \triangle\cup_{n}C_n|\Theta=\theta]=0$. Since $\field$ is a
  $\sigma$-algebra, $\cup_nC_n\in\field$, and $A$ satisfies
  \eqref{eq:def:minimality} for $C:=\cup_nC_n$.
\end{proof}

\pdfoutput=1

\section{Conjugacy}
\label{sec:conjugacy}

The posterior of a Bayesian model is a regular conditional
probability, and always exists if the model is defined on Polish spaces.
However, since the abstract components of the model -- the probability space
$(\abstspace,\abstfield,\abstmeasure)$
and the random variables
$X$ and $\Theta$ -- are not given explicitly, there is in general
no way to deduce the posterior from the sampling distribution and the prior.
The problem is solved by Bayes' theorem whenever the sampling
distribution is dominated, \ie if $P[X|\Theta]$ has a conditional density
\citep[][Theorem 1.31]{Schervish:1995}. This need not be the case in the
infinite-dimensional setting of Bayesian nonparametrics.
For a certain class of Bayesian models, so-called \emph{conjugate} models,
the posterior can be specified without appealing to Bayes' theorem.
Virtually all nonparametric Bayesian models studied in the
literature are of this type (see \eg 
\citep{Walker:Damien:Laud:Smith:1999}).

\begin{definition}[Conjugate Bayesian model]
  Let $P[X|\Theta]$ and $\PTheta[\Theta|Y]$ specify a Bayesian model.
  Let $(\Tn)_n$ be a family of measurable mappings 
  $\Tn:\xspace^n\times\yspace\rightarrow\indexspace$ with values in a
  Polish space. The family is called a
  \emph{posterior index} of the model if there
  exists a probability kernel
  $\kernel:\borelT\times\indexspace\rightarrow[0,1]$
  such that
  \begin{equation}
    \label{eq:def:posterior:index}
    \PTheta[A|X^n=(x_1,\dots,x_n),Y=y]\eqae k(A,\Tn(x_1,\dots,x_n,y))
  \end{equation}
  for every $A\in\borelT$. A Bayesian model is called \emph{conjugate}
  if there is a posterior index for which $\indexspace\subset\yspace$
  and the associated kernel $\kernel$ satisfies
  \begin{equation}
    \label{eq:def:conjugacy}
    k(A,y')=\PTheta[A|Y=y'] \qquad\qquad\text{ for all }y'\in\yspace\;.
  \end{equation}
\end{definition}
Apparently, any Bayesian model admits the identity
$\Tn:=\mbox{Id}_{\xspace^n\times\yspace}$ as a trivial
posterior index. By \eqref{eq:def:conjugacy}, a conjugate
posterior is ``in the same family'' as the prior, a model 
property commonly
referred to as \emph{closure under sampling}
\cite{Raiffa:Schlaifer:1961}.

In a projective system, we have to consider a family of spaces
$\mathcal{W}_{\indI}$ as the respective ranges of the posterior
indices $(\TnI)_n$. As for the hyperparameter spaces $\yspaceI$, we
will denote the projectors on these spaces by $\hJI$, since
$\mathcal{W}_{\indI}$ and $\yspaceI$ are either subsets of one
another, or can without loss of generality be assumed to be
contained in a common superspace.
The following theorem states that the posterior updates of a nonparametric
Bayesian model have the same ``functional form'' as those of its finite-dimensional
marginals. It also implies that conjugacy of the model requires
conjugate marginals.
\def\TnJ{\Tn_{\indJ}}
\begin{theorem}[Conjugacy in projective limit models]
  \label{theorem:conjugate:projective:limits}
  Let $\famD{\PI[\XI|\ThetaI]}$ and $\famD{\PThetaI[\ThetaI|\YI]}$
  define a projective family of Bayesian models.
  \begin{enumerate}
  \item Let $(\TnI)_n$ be posterior indices and projective, \ie
    \begin{equation}
      \label{eq:posterior:index:projective}
      \TnI\circ(\hJI\otimes\fJI^n) = \hJI\circ\TnJ\qquad\text{ for }I\in D, n\in\mathbb{N}.
    \end{equation}
    Then the mappings $\TnD:=\plim\famD{\TnI}$ form
    a posterior index of the projective limit model.
    If each marginal model is conjugate under $(\TnI)_n$,
    the projective limit model is conjugate under $(\TnD)_n$.
  \item Conversely, let the projective limit be conjugate.
  If the canonical mappings $\fI$, $\hI$ are surjective,
  the marginals of the model
  are closed under sampling. If additionally each of the mappings
  $\fI$ and $\hI$ is open or closed, the marginals are conjugate
  and their posterior indices are pushforwards of 
  $(\TnD)_n$ satisfying
  \begin{equation}
    \label{eq:theorem:conjugate:marginals:posterior:index}
    \TnI\circ(\fI^n\otimes\hI) = \hI\circ\TnD \qquad\text{ for }
    I\in D, n\in\mathbb{N}\;.
  \end{equation}
  \end{enumerate}
\end{theorem}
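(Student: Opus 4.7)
For part (1), my plan is to exploit the projective limit machinery already developed. The projectivity hypothesis \eqref{eq:posterior:index:projective} makes $(\TnI)_{\indI\in\indD}$ a projective family of measurable mappings, so Lemma~\ref{lemma:projlim:measurable:maps} immediately delivers the projective limit $\TnD:=\plim\famD{\TnI}$ as a measurable mapping. To produce the accompanying kernel $\kD$ on $\borel(\tspaceD)\times\indexspace_{\indD}$, I would construct it pointwise: for each $w\in\indexspace_{\indD}$ choose any $(x_{\indD}^n,y_{\indD})$ with $\TnD(x_{\indD}^n,y_{\indD})=w$, and observe, via Diagram~\eqref{diagram:posterior:commutes} together with projectivity of the $\TnI$, that the marginal measures $\kI(\cdot,\hI w)=\PThetaI[\cdot|\XI^n=\fI^n x_{\indD}^n,\YI=\hI y_{\indD}]$ form a projective family. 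The Kolmogorov--Bochner extension theorem assembles $\kD(\cdot,w)$ from these marginals; measurability in $w$ follows by invoking Theorem~\ref{theorem:projlim:conditionals} on the family of kernels $w\mapsto\kI(\cdot,\hI w)$. Uniqueness up to equivalence then gives $\PThetaD[A|\XD^n,\YD]\eqae\kD(A,\TnD(\XD^n,\YD))$.

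The conjugacy half of part~(1) then reduces to uniqueness of the projective limit: if $\kI(\cdot,y_{\indI}')=\PThetaI[\cdot|\YI=y_{\indI}']$ for every $y_{\indI}'\in\yspaceI$, then for any $y_{\indD}'\in\yspaceD$ the two projective families $\famD{\kI(\cdot,\hI y_{\indD}')}$ and $\famD{\PThetaI[\cdot|\YI=\hI y_{\indD}']}$ agree term by term, and their common projective limit is simultaneously $\kD(\cdot,y_{\indD}')$ and $\PThetaD[\cdot|\YD=y_{\indD}']$, forcing equality. For the closure-under-sampling half of part~(2), I would run the same diagram in reverse: conjugacy of the projective limit gives $\PThetaD[\cdot|\XD^n=x_{\indD}^n,\YD=y_{\indD}]=\PThetaD[\cdot|\YD=\TnD(x_{\indD}^n,y_{\indD})]$, and applying $\gI$ together with Diagram~\eqref{diagram:posterior:commutes} yields
$$\PThetaI[\cdot|\XI^n=\fI^n x_{\indD}^n,\YI=\hI y_{\indD}]=\PThetaI[\cdot|\YI=\hI\TnD(x_{\indD}^n,y_{\indD})].$$
Surjectivity of $\fI$ and $\hI$ makes $\fI^n\otimes\hI$ surjective, so every pair $(x_{\indI}^n,y_{\indI})$ is realized as some image, and the marginal posterior always lies in the prior family $\lbrace\PThetaI[\cdot|\YI=y']:y'\in\yspaceI\rbrace$.

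The final claim --- that the marginal posterior indices are measurable pushforwards of $\TnD$ satisfying \eqref{eq:theorem:conjugate:marginals:posterior:index} --- is where I expect the main obstacle. The candidate definition $\TnI(\fI^n x_{\indD}^n,\hI y_{\indD}):=\hI\TnD(x_{\indD}^n,y_{\indD})$ is well-defined modulo the equivalence $y'\sim y''\Leftrightarrow\PThetaI[\cdot|\YI=y']=\PThetaI[\cdot|\YI=y'']$ by the displayed equation above, but showing the resulting factor map is Borel-measurable is delicate --- non-identifiability of hyperparameters can in principle force equivalence classes with poor topological structure. This is exactly where the extra hypothesis that $\fI$ and $\hI$ be open or closed enters: an open (or closed) continuous surjection between Polish spaces is a topological quotient map with good Borel descent properties, so any Borel map constant on fibres (up to equivalence) descends to a Borel map on the image. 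Combined with a measurable selection argument to handle the equivalence classes --- available because all spaces are standard Borel --- this should supply the required $\TnI$ and close the theorem.
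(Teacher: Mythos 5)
Your proposal is correct and follows essentially the same route as the paper's proof in Appendix~\ref{sec:proof:conjugacy}: for part (1) you re-index the marginal kernels by the limit index space via $\hI$, verify projectivity from \eqref{eq:posterior:index:projective} and Diagram~\eqref{diagram:posterior:commutes}, and invoke Theorem~\ref{theorem:projlim:conditionals} with its uniqueness clause; for part (2) you project the conjugacy identity down and use surjectivity for closure under sampling. Your final step --- descending $\hI\circ\TnD$ to a Borel map on the marginal spaces --- is exactly what the paper's Lemma~\ref{lemma:selectors} accomplishes by applying the Kuratowski--Ryll-Nardzewski selection theorem to the correspondence $(\fI^n\otimes\hI)^{-1}$, which is where the open-or-closed hypothesis enters, just as you anticipated.
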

\noindent (Proof: App.~\ref{sec:proof:conjugacy}.)

Consequently, a conjugate nonparametric
Bayesian model can only be obtained from marginals which are
closed under sampling. Dropping either of the two assumptions
in the theorem -- that the model is defined as a projective
limit and that the canonical mappings be surjective --
does not lift this restriction. 
If the
model is not explicitly assumed to be a projective limit,
a family of marginals can always be obtained by defining
$\ThetaI:=\gI\ThetaD$ and 
$\PI[\XI|\ThetaI]:=\mean{(\fI\PD)[\XD|\ThetaD]|\sigma(\ThetaI)}$,
etc. The components so obtained form projective families with
the initial model as their limit, and the theorem is applicable.
Similarly, if the canonical
mappings are not assumed surjective, we simply obtain a more
technical statement of the theorem which requires closure
under sampling on the images of the canonical mappings. 
The generalization obtained in this way is trivial, since
all measures used in the construction have to concentrate on
these images.
We also note, in the context of part (2), that the projectors $\projectorI$ in
a countable product of Polish spaces are always open mappings.

Similar to projective limits, pullbacks preserve conjugacy:
\begin{proposition}[Pullbacks of conjugate models]
  \label{theorem:conjugate:pullbacks}
  Let the Bayesian model specified by $P[X|\Theta]$ and
  $\PTheta[\Theta|Y]$ be conjugate, with posterior index $(\Tn)_n$.
  Let $\tP[\tX|\tTheta]$ and $\tPTheta[\tTheta|\tY]$ be the
  respective pullbacks under $\inclusionX$ and $\inclusionT$.
  Then the Bayesian model specified by
  $\tP[\tX|\tTheta]$ and $\tPTheta[\tTheta|\tY]$ is conjugate,
  with posterior index 
  given by the pullbacks of $(\Tn)_n$ as
  \begin{equation}
    \label{eq:posterior:index:pullback}
    \tTn:=\inclusionY^{-1}\circ\Tn\circ(\inclusionX^n\otimes\inclusionY)\;.
  \end{equation}
\end{proposition}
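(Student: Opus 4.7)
The plan is to verify that $\tTn$, together with the kernel $\tk(\tilde{A},\ty') := k(\inclusionT(\tilde{A}),\, \inclusionY(\ty'))$, satisfies both \eqref{eq:def:posterior:index} and \eqref{eq:def:conjugacy} for the pullback Bayesian model. The conjugacy condition \eqref{eq:def:conjugacy} is essentially immediate: by conjugacy of the original model, $k(\inclusionT(\tilde{A}), \inclusionY(\ty')) = \PTheta[\inclusionT(\tilde{A}) \mid Y = \inclusionY \ty']$, and by Proposition \ref{lemma:pullback:conditionals} applied to the prior $\PTheta[\Theta\mid Y]$ under $\inclusionT$, this expression is precisely a version of $\tPTheta[\tilde{A}\mid \tY = \ty']$. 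So the work is to show that $\tTn$ is a posterior index and that this $\tk$ serves as the associated kernel.

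The first nontrivial step is to show that $\tTn$ is well-defined, i.e., that $\Tn(\inclusionX^n \tx^n, \inclusionY \ty) \in \inclusionY(\tyspace)$ for all $\tx^n\in\txspace^n$ and $\ty\in\tyspace$, so that the composition with $\inclusionY^{-1}$ makes sense. Under conjugacy, this value parametrizes the posterior $\PTheta[\Theta\mid X^n=\inclusionX^n\tx^n,\, Y=\inclusionY\ty]$. Because the observations and the hyperparameter lie in the pulled-back subspaces, this posterior must itself admit a pullback under $\inclusionT$ -- effectively the pullback analogue of the commuting diagram \eqref{diagram:posterior:commutes}, obtained by applying Proposition \ref{lemma:pullback:conditionals} to the conditional $\PTheta[\Theta\mid X^n,Y]$. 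Existence of this pullback forces the posterior to put full outer measure on $\inclusionT(\ttspace)$, which via conjugacy forces $\Tn(\inclusionX^n\tx^n,\inclusionY\ty)$ to lie in $\inclusionY(\tyspace)$. Measurability of $\tTn$ is then automatic, since $\inclusionX$, $\inclusionY$, and $\inclusionT$ are Borel embeddings (hence bimeasurable onto their images) and $\Tn$ is measurable.

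With $\tTn$ and $\tk$ in hand, the posterior-index identity \eqref{eq:def:posterior:index} for the pullback model is verified by applying Proposition \ref{lemma:pullback:conditionals} directly to $\PTheta[\Theta\mid X^n, Y]$ under $\inclusionT$: the resulting probability kernel is a regular version of $\tPTheta[\tTheta\mid \tX^n,\tY]$ on $\tOmega$, using that pullbacks preserve integrals in the sense of \eqref{eq:pullback:preserves:integral}. The original conjugacy identity $\PTheta[A\mid X^n,Y]=k(A,\Tn(X^n,Y))$ then translates, by unpacking the definitions of $\tk$ and $\tTn$, into
\begin{equation*}
\tPTheta[\tilde{A}\mid \tX^n=\tx^n,\tY=\ty] \;=\; \tk\bigl(\tilde{A},\, \tTn(\tx^n,\ty)\bigr) \qquad \tabstmeasure\text{-a.e.}
\end{equation*}

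The main obstacle I anticipate is the well-definedness/closure argument of the second paragraph: namely, showing that the conjugate update respects the restriction to the pulled-back hyperparameter subspace. Once this is established, the remaining steps are essentially bookkeeping built on two preservation properties already available in the paper -- commutativity of pullbacks with integrals \eqref{eq:pullback:preserves:integral} and their action on regular conditional probabilities (Proposition \ref{lemma:pullback:conditionals}).
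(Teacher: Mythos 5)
Your proposal is correct and follows essentially the same route as the paper: the paper's proof likewise reduces everything to the pullback machinery for conditionals (its Lemma \ref{lemma:pullback:parameterized:models} plays the role your Proposition \ref{lemma:pullback:conditionals} invocations play, ensuring $\txspace$, $\ttspace$, $\tyspace$ all restrict to the same $\tOmega$) and then declares \eqref{eq:posterior:index:pullback} an immediate consequence of the definitions. The one place you go beyond the paper is in making explicit the closure step -- that $\Tn(\inclusionX^n\tx^n,\inclusionY\ty)$ lands in $\inclusionY(\tyspace)$ so that $\tTn$ is well defined (which, as you note, holds only $\tabstmeasure$-a.e.\ via the posterior inheriting full outer measure on $\inclusionT(\ttspace)$ from the prior) -- a point the paper leaves implicit.
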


\begin{proof}
  As an arbitrary subset of a Polish space, $\ttspace$ is
  separable, but not in general Polish, and conditional
  probabilities are not guaranteed to be regular.
  Since the pullback is defined by restriction, which preserves
  measurability, $\tPTheta(\tTheta|\tY)$ nonetheless constitutes
  a well-defined regular conditional probability.
  Lemma \ref{lemma:pullback:parameterized:models}
  ensures that the spaces $\txspace$, $\ttspace$ and $\tyspace$
  all correspond to the same subset $\tOmega$ of the abstract
  probability space. Equation \eqref{eq:posterior:index:pullback}
  is an immediate consequence of the definitions of posterior
  indices and pullbacks of parametric models.
\end{proof}

As an example of the previous results, we consider one of the most
widely used Bayesian nonparametric models, a Gaussian process
model for regression under uniform measurement noise
\citep{Zhao:2000,Rasmussen:Williams:2006}. The 
purpose of the example is to provide concrete illustration of the abstract
quantities above, and we sacrifice rigor for brevity and refer
to Sec.~\ref{sec:examples} for more detailed constructions.

\def\tgamma{\tilde{\gamma}}
\def\tTn{\tilde{T}^{(n)}}
\def\L2{\mathcal{L}_2}
\def\ttheta{\tilde{\theta}}
\begin{example}[Gaussian process regression under white noise]
  \label{example:GP:regression}
  Consider a regression problem on $[0,1]$, in which measurements
  $x_s\in\mathbb{R}$ are recorded at covariate locations $s\in [0,1]$.
  Assume each measurement to be a value $\theta_s$ corrupted by standard
  white noise $\varepsilon_s\sim\mathcal{N}(0,1)$, that is, $x_s=\theta_s+\varepsilon_s$.
  Since the noise is discontinuous, the function space $\txspace:=\L2[0,1]$
  is a more adequate setting than the set of continuous functions considered
  in Example \ref{example:GP:C}.
  Let $(e_i)_{i\in\mathbb{N}}$ be an orthonormal basis of $\L2[0,1]$.
  Any $\tx\in\L2[0,1]$ is uniquely representable as
  $\tx = \sum_{i}x_{\indi}e_i$, where $x_{\indi}=\bigl<\tx,e_i\bigr>$.
  The mapping $\phi:\tx\mapsto (x_{\indi})_{i\in\mathbb{N}}$ is an isomorphism
  of the separable Hilbert spaces $\L2[0,1]$ and $\ell_2$.
  Since $\ell_2\subset\mathbb{R}^{\mathbb{N}}$, we 
  choose the product projective limit $\xspaceD=\mathbb{R}^{\mathbb{N}}$
  with canonical mappings $\fI:=\projectorI$.
  In the terminology of Definition \ref{def:cr:stoch:proc},
  the set $\tImage=\ell_2$ is the image of $\L2[0,1]$ under
  the Borel embedding $\phi$.

  To obtain a valid model on $\L2[0,1]$ as a pullback of the 
  Gaussian projective limits on $\xspaceD$ and $\tspaceD$,
  we need to know that the models assign outer measure 1
  to the subset $\tImage=\ell_2$ (cf. Sec.~\ref{sec:bayesian:pullback}).
  Gaussian processes with realizations in $\ell_2$ -- or, in
  our terminology, Gaussian projective limits which satisfy 
  $\PD^{\ast}(\ell_2)=1$ -- are characterized by a well-known
  result \citep[][Theorem 3.2]{Kuo:1975}: 
  Denote by $\mathcal{S}(\ell_2)$ the set of all positive
  definite Hermitian operators on $\ell_2$ 
  of ``trace class'', \ie with
  finite trace $\mbox{tr}(\Sigma)<\infty$. The Gaussian projective
  limit $\PD$ satisfies $\PD^{\ast}(\ell_2)=1$
  if and only if there are $m\in\ell_2$ and $\Sigma\in\mathcal{S}(\ell_2)$
  such that 
  \begin{equation}
    \mean{\XI}=\fI(m) \qquad\text{ and }\qquad 
    \mbox{Cov}[\XI]=(\fI\otimes\fI)(\Sigma) \;.
  \end{equation}

  To define a Bayesian model, we choose $\ttspace=\txspace$,
  $\tspaceD=\xspaceD$ and $\gI=\fI$. To define a projective family
  of priors, let 
  $m\in\ell_2$, $\Sigma\in\mathcal{S}_{\ell_2}$, and define the
  measures $\PThetaI(\ThetaI|\YI)$ as the Gaussian measures
  on $\tspaceI=\mathbb{R}^{\indI}$ with means $\gI(m)$ and
  covariance matrices $(\gI\otimes\gI)(\Sigma)$. 
  The hyperparameter spaces are therefore 
  $\yspaceI:=\mathbb{R}^{\indI}\times\mbox{Sym}(I,\mathbb{R})$, where 
  $\mbox{Sym}(I,\mathbb{R})$ is the symmetric cone of 
  real-valued $|I|\times|I|$ s.p.d.\ matrices. The projector $J\rightarrow I$
  on the latter deletes all rows and columns indexed by elements of 
  $J\setminus I$. For the white-noise 
  observation model, let $\mathbb{I}\in\mathcal{S}(\ell_2)$ 
  be the identity operator. Each marginal is chosen
  as the Gaussian conditional $\PI[\XI|\ThetaI;\mathbb{I}]$,
  \ie conditional on the random mean $\ThetaI$ for fixed unit covariance.
  The priors form a projective family of measures and the
  observation models, by Lemma \ref{lemma:cond:proj:criterion:2}, 
  a projective family of conditional distributions.
  A conjugate posterior index of the model is given by
  \begin{equation*}
    \label{eq:gaussian:parameter:updates}
    T^{(n)}_{\indI}:\;\;
    (\xI^n,m_{\indI},\Sigma_{\indI})
    \;\mapsto\;
    \bigl(m_{\indI}-\Sigma_{\indI}(\Sigma_{\indI}+\frac{1}{n}\mathbb{I}_{\indI})^{-1}(m_{\indI}-\frac{1}{n}\sum_{j=1}^n x_{\indI}),
    \Sigma_{\indI}(\Sigma_{\indI}+\frac{1}{n}\mathbb{I}_{\indI})^{-1}\bigr)\;.
  \end{equation*}
  Since the covariance of the observation model is the fixed identity matrix $\mathbb{I}_{\indI}$,
  it is not a hyperparameter, and hence formally part of the definition of
  the mapping rather than an argument.

  \def\tm{\tilde{m}}
  The posterior index $\tTn$ of the nonparametric model can be 
  constructed as follows.
  We define a candidate function which mimics the functional form
  of the finite-dimensional posterior indices $\TnI$:
  For $\tx\in\ell_2^n$, $\tm\in\ell_2$ and $\tilde{\Sigma}\in\mathcal{S}(\ell_2)$,
  define a mapping 
  $\ell_2^n\times\ell_2\times\mathcal{S}(\ell_2)\mapsto
  \ell_2\times\mathcal{S}(\ell_2)$ as
  \begin{equation*}
    \tilde{T}^{(n)}:\;\;
    (\tx^n,\tm,\tilde{\Sigma})
    \;\mapsto\;
    \bigl(\tm-\tilde{\Sigma}(\tilde{\Sigma}+\tilde{\mathbb{I}})^{-1}(\tm-\frac{1}{n}\sum_{j=1}^n\tx^{(n)}),
    \tilde{\Sigma}(\tilde{\Sigma}+\tilde{\mathbb{I}})^{-1}\bigr) \;.
  \end{equation*}
  It is straightforward to verify that (i) the maps $T^{(n)}_{\indI}$ are
  the finite-dimensional projections of $\tilde{T}^{(n)}$ under the 
  projectors $\fI\circ\phi$ and $\hI\circ\phi$, and that (ii)
  the family of maps $\famD{T^{(n)}_{\indI}}$ is projective (\ie satisfies
  \eqref{eq:def:projlim:mappings}).
  Therefore, $\tilde{T}^{(n)}$ indeed
  coincides with the unique projective limit map on the relevant subspace
  $\ell_2^n\times\ell_2\times\mathcal{S}_{\ell_2}$ of the projective limit
  space. 
  By Theorem \ref{theorem:conjugate:projective:limits}, the projective
  limit model is conjugate.
  Since $\tilde{T}^{(n)}$ maps $\ell_2^n\times\ell_2\times\mathcal{S}_{\ell_2}$
  into $\ell_2\times\mathcal{S}_{\ell_2}$,
  the posterior again assigns full outer measure to $\ell_2=\phi(\L2[0,1])$.
  By Theorem \ref{theorem:conjugate:pullbacks}, the pullback of the model
  under $\phi$ is a conjugate Gaussian process model on $\ell_2$.
  The sufficient statistics $\SI=\mbox{Id}_{\mathbb{R}^{\indI}}$ of the
  marginals are trivially projective, and by Theorem
  \ref{theorem:proj:lim:sufficient:field} and Proposition \ref{theorem:suffstat:pullback},
  the pullback $\tS=\mbox{Id}_{\ell_2}$ of their projective limit
  is a sufficient statistic of the Gaussian process model. 
\end{example}

Since conjugacy in parametric models
is, with few exceptions, a property of exponential family models,
we can interpret most conjugate nonparametric Bayesian models
as infinite-dimensional analogues of the exponential family. 
Conjugate priors of exponential family models
are characterized by a linear arithmetic in parameter
space, as shown by \citet{Diaconis:Ylvisaker:1979}. In particular,
suppose that the marginals $\PI[\XI|\ThetaI]$ and 
$\PThetaI[\ThetaI|\YI]$ are exponential family marginals with
canonical conjugate priors. With respect to suitable carrier measures
on the spaces $\xspaceI$ and $\tspaceI$, the marginals are then
defined by conditional densities
\begin{equation}
  \label{eq:expfam:densities}
  p^x_{\indI}(\xI|\thetaI)=\frac{H_{\indI}(\xI)}{Z_{\indI}(\thetaI)}e^{\left<\SI(\xI),\thetaI\right>}
  \qquad
  p^{\theta}_{\indI}(\thetaI|\lambda,\yI)
  =
  \frac{e^{\left<\thetaI,\gammaI\right>-\lambda\log
      Z_{\indI}(\thetaI)}}{K_{\indI}(\lambda,\gammaI)} \;.
\end{equation}
The function $\SI:\xspaceI\rightarrow\SspaceI$ is a sufficient
statistic. Its range is a Polish topological vector space
$\SspaceI$, which contains the parameter space $\tspaceI$ as
a subspace, and is equipped with an inner product
$\left<\,.\,,\,.\,\right>$. $H_{\indI}$ denotes a
non-negative function, and $Z_{\indI}$, $K_{\indI}$ are 
normalization functions.
The prior is parametrized by $\lambda\in\mathbb{R}_+$,
which determines concentration, and $\gammaI\in\chull(\SI(\xspaceI))$,
the convex hull of the image of $\SI$.

In our previous notation, the Bayesian model defined by 
\eqref{eq:expfam:densities} has hyperparameter space 
$\yspaceI:=\mathbb{R}_+\times\SspaceI$, with $\yI=(\lambda,\gammaI)$.
The posterior under observations $\xI^n=(\xI^{(1)},\dots,\xI^{(n)})$ 
is given by the density $p_{\indI}(\thetaI|\TnI(\xI^n,\yI))$, where
the posterior index updates hyperparameters as 
$\lambda\mapsto\lambda+n$ and 
$\gammaI\mapsto\gammaI+\sum_k\SI(\xI^{(k)})$.
Application of Theorems
\ref{theorem:conjugate:projective:limits} 
and \ref{theorem:conjugate:pullbacks}
results in an analogous representation for nonparametric models:

\begin{corollary}[Exponential family marginals]
  \label{corollary:expfam}
  Let $\famD{\PI[\XI|\ThetaI]}$ be a projective family of
  exponential family models with sufficient statistics $\SI$,
  and let $\famD{\PThetaI[\ThetaI|\YI]}$ be the family of 
  corresponding canonical conjugate priors. If the 
  priors and the sufficient
  statistics both form projective families, the projective 
  limit Bayesian model is conjugate with posterior index
  \begin{equation}
    \label{eq:exp:fam:posterior:index}
    \TnD(\xD^n,\yD):=\bigl(\lambda+n,\gammaD+\sum_k\SD(\xD^{(k)})\bigr) \;,
  \end{equation}
  where $\yD=(\lambda,\gammaD)$ and $\SD:=\plim\famD{\SI}$ is the sufficient statistic
  of $\PD[\XD|\ThetaD]$.
  Analogously, if the model is pulled back under $\phi:\txspace\rightarrow\xspaceD$ as in
  \eqref{diagram:pullback:model},
  \begin{equation}
    \label{eq:exp:fam:posterior:index:pullback}
    \tTn(\tx^n,\ty):=\bigl(\lambda+n,\tgamma+\sum_k\tS(\tx^{(k)})\bigr) \;,
  \end{equation}
  is a conjugate posterior index of the pullback model.
\end{corollary}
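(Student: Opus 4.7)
The plan is to assemble the corollary from the two main tools already established in the paper: Theorem \ref{theorem:conjugate:projective:limits}(1) for the projective limit statement and Proposition \ref{theorem:conjugate:pullbacks} for the pullback statement. The classical Diaconis--Ylvisaker theory guarantees that for each $I\in D$ the density pair in \eqref{eq:expfam:densities} is conjugate with posterior index $\TnI(\xI^n,\yI)=(\lambda+n,\gammaI+\sum_{k}\SI(\xI^{(k)}))$, which pins down the individual marginal posterior indices. The real work is to show that these $(\TnI)_n$ satisfy the projectivity condition \eqref{eq:posterior:index:projective}, and to identify the resulting projective-limit mapping with the explicit formula in \eqref{eq:exp:fam:posterior:index}.

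The hyperparameter spaces factor as $\yspaceI=\mathbb{R}_+\times\SspaceI$, so the canonical mapping $\hJI$ is the identity on the $\lambda$-component and the natural projector $h^{\indS}_{\indJ\indI}\colon\SspaceJ\to\SspaceI$ on the $\gamma$-component; this is well defined because $\famD{\SI}$ is assumed projective, which makes $\famD{\SspaceI}$ a projective system of topological vector spaces with continuous linear projectors. I would then verify \eqref{eq:posterior:index:projective} by direct computation: applying $\hJI\circ\TnJ$ to $(\xJ^n,\lambda,\gammaJ)$ produces $(\lambda+n,h^{\indS}_{\indJ\indI}\gammaJ+\sum_k h^{\indS}_{\indJ\indI}\SJ(\xJ^{(k)}))$, while $\TnI\circ(\hJI\otimes\fJI^n)$ produces $(\lambda+n,h^{\indS}_{\indJ\indI}\gammaJ+\sum_k\SI(\fJI\xJ^{(k)}))$; the two agree precisely because $\SI\circ\fJI=h^{\indS}_{\indJ\indI}\circ\SJ$, which is the hypothesis that the sufficient statistics form a projective family. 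Theorem \ref{theorem:conjugate:projective:limits}(1) then hands us that $\TnD=\plim\famD{\TnI}$ is a posterior index of the projective limit model, and the limit model is conjugate. To see that $\TnD$ has the stated closed form, I would exhibit the candidate $(\xD^n,\lambda,\gammaD)\mapsto(\lambda+n,\gammaD+\sum_k\SD(\xD^{(k)}))$ and check that its composition with every $\hI$ equals $\TnI\circ(\fI^n\otimes\hI)$ (using $\SI\circ\fI=h^{\indS}_{\indI}\circ\SD$, which follows from Theorem \ref{theorem:proj:lim:sufficient:field}(2) applied to the projective family $\famD{\SI}$); uniqueness of the projective limit of mappings then forces the candidate to coincide with $\TnD$.

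The pullback statement follows by specializing Proposition \ref{theorem:conjugate:pullbacks} to the $\phi$ and $\inclusionT$ built from the embedding of $\txspace$, and reading off the formula \eqref{eq:posterior:index:pullback}: since the first coordinate of $\TnD$ is the constant shift $\lambda\mapsto\lambda+n$ (unaffected by pulling back) and the second is the sum $\gammaD+\sum_k\SD(\xD^{(k)})$, the restriction to $\inclusionX^{-1}\txspace$ and $\inclusionT^{-1}\ttspace$ yields $(\lambda+n,\tgamma+\sum_k\tS(\tx^{(k)}))$ as claimed, where $\tS=\SD\circ\inclusionX$ is a sufficient statistic of the pulled-back model by Proposition \ref{theorem:suffstat:pullback}.

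The one step I expect to need the most care is the passage from the pointwise projectivity identity $\SI\circ\fJI=h^{\indS}_{\indJ\indI}\circ\SJ$ to the compatibility of the \emph{sums} $\sum_k\SI(\fJI\xJ^{(k)})$ with the projectors $h^{\indS}_{\indJ\indI}$; this requires $h^{\indS}_{\indJ\indI}$ to be additive, which is automatic in the canonical exponential-family setting where $\SspaceI$ is a topological vector space and the projectors are continuous linear maps, but must be stated explicitly as part of the setup. Once linearity of the projectors is in place, the rest is bookkeeping.
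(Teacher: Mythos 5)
Your proposal is correct and follows the same route as the paper, which states the corollary as an immediate application of Theorem \ref{theorem:conjugate:projective:limits}(1) and Proposition \ref{theorem:conjugate:pullbacks} without writing out the verification; you supply exactly the missing details, namely that projectivity of the marginal posterior indices \eqref{eq:posterior:index:projective} reduces via $\SI\circ\fJI=h^{\indS}_{\indJ\indI}\circ\SJ$ and linearity of the hyperparameter projectors to the assumed projectivity of the sufficient statistics, and that the explicit formula \eqref{eq:exp:fam:posterior:index} is forced by uniqueness of projective limits of mappings. (Minor point: the commutation $\SI\circ\fI=h^{\indS}_{\indI}\circ\SD$ comes from Lemma \ref{lemma:projlim:mappings}, the construction of $\plim\famD{\SI}$, rather than from Theorem \ref{theorem:proj:lim:sufficient:field}(2), but the fact itself is right and your flagged caveat about additivity of the projectors is exactly the hypothesis implicit in the paper's canonical-conjugate setup.)
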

An example of such a posterior is the Dirichlet process
on the line with concentration $\alpha$ and base measure $G_0$,
for which posterior parameters are updated under observations
$v^{(1)},\dots,v^{(n)}$ as
\begin{equation}
  (v^{(1)},\dots,v^{(n)},\alpha\cdot G_0) \;\mapsto\; \frac{n}{n+\alpha}\sum_{k=1}^n \delta_{v^{(k)}} + \frac{\alpha}{n+\alpha} G_0 \;.
\end{equation}
The next section covers this example in detail.
The Gaussian process regression above
is an instance
of Corollary \ref{corollary:expfam} as well, although
our formulation in Example \ref{example:GP:regression}
uses the standard parametrization of the Gaussian, rather than
an exponential family parametrization adapted
to \eqref{eq:exp:fam:posterior:index:pullback}.

\pdfoutput=1

\section{Examples}
\label{sec:examples}

\def\Part{\Pi}
\def\PartI{\Part_{\indI}}
\def\PartJ{\Part_{\indJ}}
\def\mI{m_{\indI}}
\def\mJ{m_{\indJ}}
\def\bgammaI{\bar{\gamma}_{\indI}}
\def\piD{\pi_{\indD}}

Two detailed construction examples are given 
in this section to illustrate
our results: The well-known Dirichlet process
\cite{Ferguson:1973,Kingman:1975}, and a new
nonparametric Bayesian model on the infinite
symmetric group. The steps of both constructions are 
(i) the definition of
projective systems to obtain $\xspaceD$ and
$\tspaceD$, (ii) the definition of finite-dimensional
priors and likelihoods for each $I\in D$
to define a projective limit Bayesian model,
and (iii) a pullback step to ensure that the models
concentrate on the desired subspace of interest
-- the set of probability measures and the infinite
symmetric group, respectively.
By means of the results in Secs.~\ref{sec:sufficiency} and
\ref{sec:conjugacy}, sufficiency and conjugacy properties
of the models can then be read off from the properties
of the marginals.

\subsection{Dirichlet Process Priors}
\label{sec:examples:DP}

In this example, $\tPTheta$ is a Dirichlet process and
$\tP$ its conjugate observation model.
The domain of the Dirichlet process is
assumed to be a Polish measurable space $(V,\borelV)$,
\ie random measures
drawn from the process are convex combinations of the form 
$\thetaD=\sum_{k\in\mathbb{N}}c_k\delta_{v_k}$ with $v_k\in V$.

\subsubsection{Projective System}
The finite-dimensional marginals will be Dirichlet and multinomial
distributions. \citet{Ferguson:1973} noted that a particularly
intuitive way to index such distributions is to choose
each $I\in D$ as a finite, measurable partition $I=(A_1,\dots,A_{|I|})$
of $V$. The $|I|$-dimensional Dirichlet distribution
$\PThetaI$ can then be interpreted as a random measure
on the finite $\sigma$-algebra $\sigma(I)$ generated by
the sets in $I$. Let $\parts(\borelV)$ be the set of
all finite partitions $I=(A_1,\dots,A_{|I|})$ with $A_i\in\borelV$.
This set is itself not an adequate choice for $D$, since it is
uncountable unless $V$ is finite. However, since $V$
is Polish, there exists a countable algebra $\mathcal{Q}\subset\borelV$
which generates $\borelV$.
Any probability measure on $\borelV$ can,
by Carath\'{e}odory's extension theorem,
be unambiguously represented by its restriction to
$\mathcal{Q}$. Bearing this in mind, we define 
$D:=\partsQ$ as the set of finite partitions with $A_i\in\mathcal{Q}$.
A partial order on $D$ is defined
by $I\po J$ if and only if $I\cap J=J$, that is, if $J$ is a
refinement of the partition $I$.

For each index $I=(A_1,\dots,A_{\mI})$ in $D$, the marginal spaces are
chosen as the spaces corresponding to a Dirichlet-multinomial
Bayesian model over $\mI$ categories:
Let the parameter space $\tspaceI$ be the set of probability
distributions on the $\sigma$-algebra generated by $I$, \ie
the unit simplex 
$\simpI\subset\mathbb{R}^{\indI}$. The hyperparameter
space of a Dirichlet model on $\simpI$ is 
$\yspaceI:=\mathbb{R}_{>0}\times\simpI$.

To define the observation spaces $\xspaceI$, we interpret the
sets $A_i\in I$ as categories or ``bins'' of a multinomial distribution.
A sample in category $A_i$ can be encoded as $\lbrace\XI=A_i\rbrace$.
Hence, $\XI$ takes values in $\xspaceI:=I$. Both the topology
$\topI$ and Borel sets $\borelI$ on $\xspaceI$ are generated by 
the singleton events $\lbrace A_1\rbrace,\dots,\lbrace A_{m_{\indI}}\rbrace$.

To define suitable projectors, consider a pair $I\po J$
of indices, where $I=(A_1,\dots,A_{\mI})$ and $J=(A'_1,\dots,A'_{\mJ})$.
Any set $A_i\in I$ is the union of some
sets in $J$, hence $A_i=\cup_{j\in J_i}A'_j$ for some $J_i\subset [\mJ]$.
Let $\thetaJ\in\simpJ$ be a finite probability distribution
and $A'_j\in J$. We define
\begin{equation}
  \label{eq:example:DP:def:f:g}
  \fJI(A'_j) = A_i 
  \quad\text{ for $j\in J_i$, \quad and }\quad
  (\gJI\thetaJ)_i:=\sum_{j\in J_i}(\thetaJ)_j\;.
\end{equation}
In words, for any coarsening of a finite set of events $J$ 
to $I$, $\fJI$ maps $A'_j$ to the coarser event containing it,
and $\gJI$ sums the corresponding 
probabilities. Since the model is of
conjugate exponential family type, 
the projections $\hJI:\yspaceJ\rightarrow
\yspaceI$ of hyperparameters are given by
$\hJI:=\mbox{Id}_{\mathbb{R}_+}\otimes\gJI$.
The families of mappings $\fJI:J\rightarrow I$
and $\gJI:\simpJ\rightarrow\simpI$ are
continuous, surjective and satisfy \eqref{eq:def:canonical:map}.
It is straightforward to verify that 
$\famD{\xspaceI,\borel(\xspaceI),\fJI}$ and
$\famD{\tspaceI,\borel(\tspaceI),\gJI}$
are projective systems of measurable Polish spaces.
Properties of the hyperparameter spaces $\yspaceI$
carry over immediately from $\tspaceI$.

What are the projective limit spaces 
$\xspaceD$ and $\tspaceD$ defined by the two systems? 
The set $\xspaceD$ consists of
all collections of the form 
\begin{equation}
  \xD=\lbrace C_{\indI}\in I | I\in D, C_{\indI}\supset C_{\indJ} \text{ whenever } I\po J\rbrace \;.
\end{equation}
Whereas a draw from $\XI$ selects a single random set $C_{\indI}\in I$, a draw from
$\XD$ selects one random set $C_{\indI}$ for each $I$. A single, ``smallest'' set can
be associated with each $\xD\in\xspaceD$ by defining $\lim\xD:=\cap_{I}C_{\indI}$.
Unlike the constituent sets $C_{\indI}$, the set $\lim\xD$ is not in general an
element of $\mathcal{Q}$, and we have 
$\mathcal{Q}\subset\lbrace\lim\xD|\xD\in\xspaceD\rbrace\subset\borelV$.
In particular, the proof of Lemma 
\ref{lemma:examples:DP:concentration:likelihood} below shows that 
the set $\lbrace\lim\xD|\xD\in\xspaceD\rbrace$ contains all singleton
$\lbrace v \rbrace$ for points $v\in V$, which are not contained in 
the countable set $\mathcal{Q}$. In analogy to the interpretation
of $\XI$ as an event $A_i\in I$, we can interpret $\XD=\xD$ as the
event $\lim\xD$ in $\borelV$.

The projective limit $\tspaceD$ of parameter spaces is the
set of all charges, \ie of \emph{finitely} additive probabilities on the algebra
$\mathcal{Q}$. The space $(\tspaceD,\borel(\tspaceD))$ contains
the set $\pMeas(\mathcal{Q})$ of countably additive probability
measures as a measurable subset \citep[][Proposition 9]{Orbanz:2010}.
For any set function $G\in\tspaceD$, the canonical maps 
$\gI:\tspaceD\rightarrow\simpI$ are the evaluations 
$G\mapsto(G(A_1),\dots,G(A_{\mI})$.
The fact that the space $\pMeas(\mathcal{Q})$ cannot directly
be defined as a projective limit of finite-dimensional simplices
is an example for the projective limit's ability to encode
a finitary property (finite additivity), but not an infinitary
one (countable additivity).

\subsubsection{Bayesian Model}
Each marginal Bayesian model is defined by
a multinomial distribution $\PI[\XI|\ThetaI]$ on $\mI$ categories
and by its conjugate prior
$\PThetaI[\ThetaI|\YI]$ on $\simpI$. The Dirichlet distribution 
is a natural conjugate prior as in \eqref{eq:expfam:densities},
with parameters $(\lambda,\gammaI):=(\lambda,\alpha G_{\indI})$, where
$\alpha\in\mathbb{R}_+$ controls concentration and
an $G_{\indI}\in\simpI$ is the expected value. Since
$\log Z_{\indI}(\thetaI)=0$ for
the Dirichlet distribution, the value of $\lambda$ does not
affect the model and is henceforth omitted. 
Though
$\alpha$ controls the concentration of the model, it
acts linearly on $\thetaI$, in contrast to the nonlinear
influence of $\lambda$ on other conjugate priors. 
It is easy to show
that the multinomial and Dirichlet families so defined
form a projective family of Bayesian models if and
only if the hyperparameters are chosen consistently as
$\gammaI:=\alpha\cdot\gI G_0$ for a fixed $\alpha\in\mathbb{R}_+$ and
some $G_0\in\tspaceD$. 

\subsubsection{Pullback to $M(V)$}
What remains to be done is to ensure that the Dirichlet process
prior $\PThetaD[\ThetaD|\YD=\yD]$ defines a measure on the
set of probability measures. 
\begin{lemma}[Proof: App.~\ref{sec:proofs:examples}]
  \label{lemma:examples:DP:concentration:prior}
  If $V$ is Polish, the countable generating algebra $\mathcal{Q}\subset\borelV$ can be 
  chosen such that, for any charge $G_0$ on $\mathcal{Q}$,
  \begin{equation}
    \PD^{\theta,\ast}[\pMeas(\mathcal{Q})|\YD=(\alpha,G_0)]=1 
    \quad\Leftrightarrow\quad
    G_0\in\pMeas(\mathcal{Q})\;.
  \end{equation}
\end{lemma}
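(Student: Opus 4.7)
The strategy is to exploit two basic facts about the projective limit $\PThetaD$: for every $A\in\mathcal{Q}$ one has $\mean{\ThetaD(A)}=G_0(A)$ (read off the Dirichlet marginal on any partition refining $\{A,A^c\}$), and along any decreasing sequence $B_n\downarrow\emptyset$ in $\mathcal{Q}$ the random numbers $\ThetaD(B_n)$ are automatically non-increasing in $[0,1]$ (because $\ThetaD$ is a charge). The content of the lemma is then to reduce countable additivity -- an uncountable family of conditions -- to something that can be controlled on a measurable set of full $\PD^{\theta}$-measure.

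The main work lies in the ``$\Leftarrow$'' direction. Using that $V$ is Polish, I would choose $\mathcal{Q}$ as the countable algebra generated by the open balls $B(v_k,1/n)$ for a countable dense set $\{v_k\}$, enlarged if necessary so that for each $n$ a finite sub-union of these balls gives a cover of diameter $<1/n$ on an arbitrarily tight part of $V$. With this choice the Polish structure yields a countable family $\mathcal{N}=\{(B_n^{(\ell)})_{n}: \ell\in\mathbb{N}\}$ of decreasing sequences in $\mathcal{Q}$ with empty intersection such that \emph{a charge $G$ on $\mathcal{Q}$ extends to a Borel measure iff $G(B_n^{(\ell)})\downarrow 0$ for each $\ell$}; essentially this is a countable tightness criterion forced by separability of $V$. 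Given this, fix $\ell$ and assume $G_0\in\pMeas(\mathcal{Q})$. The sequence $\ThetaD(B_n^{(\ell)})$ converges almost surely to some $L_\ell\in[0,1]$, and by bounded convergence combined with countable additivity of $G_0$,
\[
  \mean{L_\ell}=\lim_{n}\mean{\ThetaD(B_n^{(\ell)})}=\lim_{n}G_0(B_n^{(\ell)})=0,
\]
so $L_\ell=0$ almost surely. Intersecting over the countable family $\mathcal{N}$ gives a measurable event of full $\PD^{\theta}$-probability on which every test sequence collapses, and on that event $\ThetaD\in\pMeas(\mathcal{Q})$; since $\pMeas(\mathcal{Q})$ is a measurable subset of $\tspaceD$, outer measure and measure coincide and the conclusion follows.

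The converse ``$\Rightarrow$'' direction is then immediate: if $\PD^{\theta,\ast}[\pMeas(\mathcal{Q})|\YD=(\alpha,G_0)]=1$, then almost every $\ThetaD$ is countably additive, so for any fixed $B_n\downarrow\emptyset$ in $\mathcal{Q}$ one has $\ThetaD(B_n)\to 0$ almost surely, and dominated convergence yields $G_0(B_n)=\mean{\ThetaD(B_n)}\to 0$, i.e.\ $G_0\in\pMeas(\mathcal{Q})$. The technical heart of the argument, and the step I would spend the most care on, is the extraction of the countable certifying family $\mathcal{N}$: without Polishness one cannot reduce countable additivity on $\mathcal{Q}$ to countably many conditions, and without such a reduction the $\aleph_1$-many null-set exceptions generated by the individual test sequences cannot be consolidated into a single event of full measure.
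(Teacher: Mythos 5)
Your overall architecture is sound and your ``$\Rightarrow$'' direction is correct as written (given that $\pMeas(\mathcal{Q})$ is measurable in $\tspaceD$, which the paper asserts elsewhere). The gap is in the ``$\Leftarrow$'' direction, which rests entirely on the unproved claim that there is a countable family $\mathcal{N}$ of decreasing sequences in $\mathcal{Q}$ with empty intersection such that a charge $G$ is countably additive iff $G(B_n^{(\ell)})\downarrow 0$ for every $\ell$ --- and your gloss of this as ``essentially a countable tightness criterion forced by separability'' is exactly where the problem sits. Tightness alone cannot be the criterion: take $V=[0,1]$, let $\mathcal{Q}$ be the algebra generated by closed rational balls, and let $G$ be an ultrafilter limit of $\delta_{1/2+\epsilon_n}$ with $\epsilon_n\downarrow 0$ irrational. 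Since $V$ is compact every tightness condition is vacuous, yet $G\bigl((1/2,\,1/2+2^{-k}]\bigr)=1$ for all $k$ while these sets decrease to $\emptyset$, so $G$ is not countably additive. A correct countable certifying family must therefore contain, besides the ``escape to infinity'' sequences $V\setminus\bigcup_{k\le N}\bar{B}(v_k,1/m)$, boundary-collapse sequences such as $\lbrace x : r<d(x,v)\le r+1/j\rbrace\downarrow_j\emptyset$ for $v$ in the countable dense set and $r$ rational; and one then still needs a compact-class argument --- this is where the choice of \emph{closed} rather than open balls and the completeness of $V$ actually enter --- to show that these countably many conditions imply smoothness at $\emptyset$ along \emph{every} decreasing sequence in $\mathcal{Q}$. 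That reduction is the technical heart of the lemma and is missing from your proposal.

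For comparison, the paper does not prove this reduction either: its proof consists of constructing $\mathcal{Q}$ as the algebra generated by closed rational balls around a countable dense subset and then citing \citep[Theorem 1]{Orbanz:2010} for the entire equivalence. So your plan amounts to an attempt to reprove that external theorem. The probabilistic part of your argument --- the identity $\mean{\ThetaD(A)}=G_0(A)$ read off the Dirichlet marginals, the monotone almost sure limits $L_\ell$ with $\mean{L_\ell}=0$, and the intersection of countably many full-measure events --- is the standard route and is fine; it is the measure-theoretic reduction of countable additivity on $\mathcal{Q}$ to countably many conditions that is not a routine consequence of separability and would need to be supplied in full.
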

In other words, the prior concentrates on countably additive set functions if
and only if its hyperparameter is countably additive.
We obtain a corresponding concentration result for the sampling model:
\begin{lemma}[Proof: App.~\ref{sec:proofs:examples}]
  \label{lemma:examples:DP:concentration:likelihood}
  Define a relation $\phi\subset V\times\xspaceD$ by means of 
  \begin{equation}
    v\equiv_{\phi}\xD\qquad\Leftrightarrow\qquad\lim\xD=\lbrace v \rbrace
  \end{equation}
  \begin{enumerate}
  \item $\phi$ is a mapping $V\rightarrow\xspaceD$, and a Borel embedding.
  \item If $\thetaD\in M(V)$ is purely atomic, $\PD^{\ast}[\phi(V)|\ThetaD=\thetaD]=1$.
  \end{enumerate}
\end{lemma}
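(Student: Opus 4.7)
For part (1), the plan is to construct $\phi(v)$ explicitly, verify measurability, and then invoke a standard result on injective measurable maps between standard Borel spaces. Given $v\in V$ and $I=(A_1,\dots,A_{m_{\indI}})\in D$, the cells of $I$ are pairwise disjoint, so exactly one $A_i$ contains $v$; set $C_{\indI}:=A_i$. These choices are consistent under refinement (if $I\po J$ and the unique cell of $J$ containing $v$ is $A'_j$, then $A'_j\subset A_i = C_{\indI}$), so $(C_{\indI})_{\indI\in\indD}$ defines an element of $\xspaceD$. Because $V$ is Polish, the countable generating algebra $\mathcal{Q}$ may be chosen to separate points (for instance, the algebra generated by a countable topological base, as in Lemma \ref{lemma:examples:DP:concentration:prior}), and then $\bigcap_{\indI} C_{\indI}=\{v\}$, giving $v\equiv_{\phi}(C_{\indI})_{\indI}$. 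Uniqueness is immediate: any $\xD'$ with $\lim\xD'=\{v\}$ must have $v\in C'_{\indI}$ for every $I$, so disjointness forces $C'_{\indI}=C_{\indI}$. The same separation property yields injectivity of $\phi$, and measurability follows from the observation that each $f_{\indI}\circ\phi$ sends $v$ to its enclosing cell, so the preimage of $\{A_j\}$ is the Borel set $A_j$. Since $V$ and $\xspaceD$ are both standard Borel, \citet[Theorem A1.3]{Kallenberg:2001} then guarantees that $\phi(V)$ is Borel in $\xspaceD$ and $\phi^{-1}$ is measurable, so $\phi$ is a Borel embedding in the sense of Definition \ref{def:cr:stoch:proc}.

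For part (2), the strategy is to realise $\PD[\XD\mid\ThetaD=\thetaD]$ as the law of $\phi(V^*)$ for a $V$-valued random variable $V^*\sim\thetaD$, and then read off the concentration on $\phi(V)$ directly. Write $\thetaD=\sum_k c_k\delta_{v_k}$, and on an auxiliary probability space let $V^*$ have law $\thetaD$. By part (1), $\phi(V^*)$ is a well-defined $\xspaceD$-valued random variable, whose $I$-th coordinate satisfies
\begin{equation*}
  \Pr\bigl((f_{\indI}\circ\phi)(V^*)=A_j\bigr)=\Pr(V^*\in A_j)=\thetaD(A_j),
\end{equation*}
which is precisely the multinomial marginal $\PI[\XI\in\cdot\mid\ThetaI=\gI\thetaD]$. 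Hence the pushforward $\phi_{\ast}\thetaD$ on $\xspaceD$ has the same finite-dimensional marginals as $\PD[\XD\mid\ThetaD=\thetaD]$, and uniqueness in the Kolmogorov-Bochner theorem (App.~\ref{sec:background}, Theorem \ref{theorem:bochner}), applied pointwise in $\thetaD$, forces $\phi_{\ast}\thetaD = \PD[\XD\mid\ThetaD=\thetaD]$. Since $\phi(V^*)\in\phi(V)$ by construction, any Borel $B\supset\phi(V)$ satisfies $\phi^{-1}(B)\supset V$, and therefore $\PD[B\mid\ThetaD=\thetaD]=\thetaD(\phi^{-1}(B))=1$. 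Taking the infimum over such $B$ gives $\PD^{\ast}[\phi(V)\mid\ThetaD=\thetaD]=1$.

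The principal obstacle is the identification $\phi_{\ast}\thetaD = \PD[\XD\mid\ThetaD=\thetaD]$: once the marginals are matched, this reduces to pointwise uniqueness of the projective limit, but one must check that for each fixed $\thetaD$ the multinomial marginals genuinely form a projective family of measures on standard Borel spaces, rather than merely a projective family of conditionals in the sense of Definition \ref{def:cond:proj:family}. The atomicity hypothesis plays essentially no role in the argument above — which in fact works for any $\thetaD\in\pMeas(V)$ whose support is separated by $\mathcal{Q}$ — but it is the case needed downstream, since by Lemma \ref{lemma:examples:DP:concentration:prior} the Dirichlet process prior concentrates on purely atomic measures and only the behaviour of the sampling model on that subset of $\pMeas(V)$ enters the subsequent pullback construction.
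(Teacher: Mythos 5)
Your proposal is correct, and while part (1) essentially retraces the paper's argument, part (2) takes a genuinely different route. In part (1) the paper likewise defines $\phi(v)$ as the collection of enclosing cells and verifies measurability via $(\fI\circ\phi)^{-1}\lbrace A_i\rbrace=A_i$; where you differ is the measurable inverse: the paper exhibits $\phi(A)$, for $A\in\mathcal{Q}$, explicitly as the trace of a cylinder set, $\phi(A)=\fI^{-1}\lbrace A\rbrace\cap\phi(V)$, whereas you appeal to Lusin--Souslin. Both work, but note that \citep[][Theorem A1.3]{Kallenberg:2001} as cited (measurable bijections between standard Borel spaces are bimeasurable) presupposes the image is already known to be standard Borel, so your claim that $\phi(V)$ is Borel in $\xspaceD$ really rests on the stronger Lusin--Souslin statement that injective Borel images of Borel sets are Borel; the paper's cylinder-set computation sidesteps this. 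Your explicit check that $\bigcap_{\indI}C_{\indI}=\lbrace v\rbrace$ via point separation of $\mathcal{Q}$ makes precise something the paper leaves implicit, and is genuinely needed for $\phi$ to be defined on all of $V$. In part (2) the paper writes $\thetaD=\sum_i c_i\delta_{v_i}$ and computes the mass of each singleton $\lbrace\phi(v_i)\rbrace$ directly, choosing balls $Q_n\in\mathcal{Q}$ with $Q_n\searrow\lbrace v_i\rbrace$ and partitions $I_n\ni Q_n$ to get $\PD[\lbrace\phi(v_i)\rbrace|\ThetaD=\thetaD]=\lim_n\thetaD(Q_n)=c_i$, then sums the atoms to $1$. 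Your transfer argument -- identify $\PD[\,.\,|\ThetaD=\thetaD]$ with $\phi_{\ast}\thetaD$ by matching marginals and invoking uniqueness in Kolmogorov--Bochner -- avoids that computation entirely and, as you observe, never uses atomicity, so it yields concentration on $\phi(V)$ for every $\thetaD\in\pMeas(V)$; the paper's computation buys the sharper information that the conditional law is purely atomic with the same weights $c_i$, in line with the discreteness of Dirichlet process draws. Both arguments share the caveat you correctly flag: one must fix the canonical version of the conditional in which, for each $\thetaD$, $\PD[\,.\,|\ThetaD=\thetaD]$ is the projective limit of the multinomials with parameters $\gI\thetaD$ -- which is exactly the version the paper itself uses in its computation.
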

Lemma \ref{lemma:examples:DP:concentration:likelihood} provides a 
suitable embedding $\phi$ for the pullback of $\PD[\,.\,|\ThetaD]$.
For the prior, let $\inclusionT:\pMeas(V)\rightarrow\tspaceD$
be the mapping which takes a probability measure $\nu$ on $\borelV$
to its restriction on $\mathcal{Q}$. By the Carath\'{e}odory extension
theorem, $\inclusionT$ is injective. Since both $\pMeas(V)$ and 
the Borel subset $\pMeas(\mathcal{Q})\subset\tspaceD$ are standard
Borel spaces, $\inclusionT$ is a Borel embedding (cf Sec.~\ref{sec:stochproc:stochproc}).
We set $\txspace:=\lbrace \lbrace v\rbrace|v\in V\rbrace$, which
we identify with $V$, and choose
$\ttspace=\pMeas(V)$ as parameter
space and $\tyspace:=\mathbb{R}_+\times\pMeas(V)$ 
as hyperparameter space. We do not show here that draws from
the Dirichlet process are almost surely discrete, and instead
refer to \citep{Ghosal:2010}.

Sufficient statistics of the marginal models can be defined as
$\SI:I\rightarrow\simpI$ with $A_i\mapsto\delta_{\lbrace A_i\rbrace}$,
\ie the event $\lbrace \XI=A_i\rbrace$ is mapped to a point mass at the singleton $\lbrace A_i\rbrace\in\borelI$.
Define $\tS:V\rightarrow\pMeas(V)$ by $v\mapsto\delta_{\lbrace v \rbrace}$.
For any $v\in V$ and any $I\in D$, there is a unique $A_i\in I$ with $v\in A_i$.
Hence, 
\begin{equation}
        (\gI\circ\inclusionT)\circ\tS = \SI\circ(\fI\circ\phi) \;,
\end{equation}                  
since $\gI\circ\inclusionT$ maps $\nu\in M(V)$ to its evaluation on
the partition $I$, and $\fI\circ\phi$ maps $v\in V$ to the event
$A_i$ in $I$ containing $v$.
Therefore, $\SI$ is a pullback of the projective limit $\SD=\plim\famD{\SI}$.
By Theorem \ref{theorem:proj:lim:sufficient:field} and Proposition
\ref{theorem:suffstat:pullback}, $\tS$ is a sufficient statistic
for the pullback model. By Theorem \ref{theorem:conjugate:projective:limits}
and Proposition \ref{theorem:conjugate:pullbacks}, the pullback model
is conjugate. In summary:
\begin{corollary}
  The pullback Bayesian model defined by $\tP[\tX|\tTheta]$ and 
  $\tPTheta[\tTheta|\tY]$ is a conjugate Bayesian model with
  hyperparameter space $\mathbb{R}_{>0}\times\pMeas(V)$,
  parameter space $\lbrace \nu\in\pMeas(V)|\nu\text{ discrete}\rbrace$, and sample space $V$.
  The posterior index under $n$ observations
  $\tx^{(1)},\dots,\tx^{(n)}\in V$ is
  \begin{equation}
    \alpha G_0\quad\mapsto\quad \alpha G_0+\sum_{i}\tS(\tx^{(i)}) \;,
  \end{equation}
  and $\tS:v\mapsto\delta_{\lbrace v\rbrace}$ is a sufficient statistics of the model.
\end{corollary}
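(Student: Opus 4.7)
The plan is to assemble this corollary by chaining together the projective-limit and pullback machinery developed earlier, applied to the Dirichlet--multinomial marginals specified in the preceding construction. First I would verify that the family $\famD{\PI[\XI|\ThetaI]}$ of multinomials and $\famD{\PThetaI[\ThetaI|\YI]}$ of Dirichlets, with hyperparameters $\gammaI = \alpha\cdot\gI G_0$, form projective families of regular conditional probabilities in the sense of Definition \ref{def:cond:proj:family}. For the Dirichlets this is the well-known aggregation property; for the multinomials, projectivity follows either by direct computation using \eqref{eq:example:DP:def:f:g} or by Lemma \ref{lemma:cond:proj:criterion:2}. Theorem \ref{theorem:projlim:conditionals} then produces $\PD[\XD|\ThetaD]$ and $\PThetaD[\ThetaD|\YD]$ as unique projective limits.

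Next, since each marginal is a conjugate exponential family model with sufficient statistic $\SI(A_i) = \delta_{\lbrace A_i\rbrace}$ and hyperparameter update $\gammaI \mapsto \gammaI + \sum_k \SI(\xI^{(k)})$, Corollary \ref{corollary:expfam} gives at once that the projective limit Bayesian model is conjugate with posterior index $\TnD(\xD^n,\alpha G_0) = \alpha G_0 + \sum_k \SD(\xD^{(k)})$, where $\SD := \plim\famD{\SI}$ exists and is measurable by Lemma \ref{lemma:projlim:measurable:maps}. Sufficiency of $\SD$ for $\PD[\XD|\ThetaD]$ follows from Theorem \ref{theorem:proj:lim:sufficient:field}(2).

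To descend to the pullback model on $V$ and $\pMeas(V)$, I invoke Lemma \ref{lemma:examples:DP:concentration:prior} and Lemma \ref{lemma:examples:DP:concentration:likelihood}. The first shows that for hyperparameter $G_0 \in \pMeas(V)$, the prior assigns full outer measure to $\inclusionT(\pMeas(V)) = \pMeas(\mathcal{Q})$, so $\tPTheta[\tTheta|\tY]$ is well-defined on $\pMeas(V)$ via Proposition \ref{lemma:pullback:conditionals}. The second, combined with the fact that Dirichlet process draws are a.s.\ purely atomic (cf.\ \citep{Ghosal:2010}), shows that for $\tPTheta$-almost every $\tilde\theta \in \pMeas(V)$ the sampling measure $\PD[\XD|\ThetaD=\inclusionT\tilde\theta]$ concentrates on $\phi(V)$, so $\tP[\tX|\tTheta]$ is well-defined on $V$. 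Proposition \ref{theorem:conjugate:pullbacks} then transports conjugacy across the pullback, yielding posterior index $\tTn = \inclusionY^{-1}\circ\TnD\circ(\phi^n \otimes \inclusionY)$. Using the identity $(\gI \circ \inclusionT)\circ\tS = \SI \circ (\fI \circ \phi)$ already verified in the text, the projective limit $\SD$ pulls back to $\tS : v \mapsto \delta_{\lbrace v\rbrace}$, and the explicit posterior index collapses to $\alpha G_0 \mapsto \alpha G_0 + \sum_i \tS(\tx^{(i)})$ on $\pMeas(V)$. Sufficiency of $\tS$ for the pullback model is Proposition \ref{theorem:suffstat:pullback}.

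The main obstacle, I expect, is the bookkeeping around the two concentration lemmas: Lemma \ref{lemma:examples:DP:concentration:likelihood}(2) only gives $\PD^{\ast}[\phi(V)|\ThetaD=\thetaD] = 1$ for purely atomic $\thetaD$, so the pullback of the sampling model on its own is not uniformly well-defined; one must first restrict the parameter space via the pullback of the prior (and invoke almost-sure discreteness of Dirichlet draws) to guarantee that the joint pullback construction produces a valid Bayesian model in the sense of Section \ref{sec:bayesian:pullback}. Once this restriction is in place, every remaining ingredient is a direct invocation of a result already proved in the preceding sections.
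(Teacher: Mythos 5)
Your proposal is correct and follows essentially the same route as the paper: establish projectivity of the Dirichlet--multinomial marginals, apply the projective-limit and pullback machinery (Theorems \ref{theorem:projlim:conditionals}, \ref{theorem:proj:lim:sufficient:field}, \ref{theorem:conjugate:projective:limits} and Propositions \ref{theorem:suffstat:pullback}, \ref{theorem:conjugate:pullbacks}), use Lemmas \ref{lemma:examples:DP:concentration:prior} and \ref{lemma:examples:DP:concentration:likelihood} together with almost-sure discreteness of Dirichlet draws to justify the pullbacks, and transport the sufficient statistic via the identity $(\gI\circ\inclusionT)\circ\tS = \SI\circ(\fI\circ\phi)$. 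Your routing of the conjugacy step through Corollary \ref{corollary:expfam} rather than directly through Theorem \ref{theorem:conjugate:projective:limits} is an immaterial difference, and your closing remark about restricting the parameter space to discrete measures before pulling back the sampling model is exactly the point the paper handles by citing \citet{Ghosal:2010}.
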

The measure $\tPTheta[\tTheta|\tY=(\alpha,G_0)]$ is, in the terminology of nonparametric
Bayesian statistics, a Dirichlet process with concentration $\alpha$ and base measure $G_0$.

\def\Part{\Pi}
\def\PartI{\Part_{\indI}}
\def\PartJ{\Part_{\indJ}}
\def\mI{m_{\indI}}
\def\mJ{m_{\indJ}}
\def\bgammaI{\bar{\gamma}_{\indI}}
\def\piD{\pi_{\indD}}
\def\kernel{k}
\def\tauI{\tau_{\indI}}
\def\tauJ{\tau_{\indJ}}
\def\tauD{\tau_{\indD}}
\def\thetaI{\theta_{\indI}}
\def\thetaJ{\theta_{\indJ}}
\def\thetaD{\theta_{\indD}}
\def\kI{k_{\indI}}
\def\kJ{k_{\indJ}}
\def\kD{k_{\indD}}
\def\modelI{\model_{\indI}}
\def\SG#1{\mathbb{S}_{#1}}
\def\SGinf{\mathbb{S}_{\infty}}
\def\VP{\mathfrak{S}}
\def\fnn{f_{n+1,n}}
\def\gammaI{\gamma_{\indI}}
\def\gammaD{\gamma_{\indD}}
\def\chull{\mbox{conv}}
\def\TnI{\Tn_{\indI}}
\def\TnD{\Tn_{\indD}}
\def\kernelI{\kernel_{\indI}}
\def\kernelD{\kernel_{\indD}}
\def\yD{y_{\indD}}
\def\yI{y_{\indI}}

\subsection{A Nonparametric Model on Permutations}
\label{sec:examples:cayley}

In this second example, the observations $\tx$ are elements of the infinite
symmetric group $\SGinf$, and the parameters are sequences $\ttheta\in\mathbb{R}^{\infty}$ 
satisfying a certain convergence condition. The infinite symmetric group
is the set of all permutations of the set $\mathbb{N}$ which change an arbitrary
but finite number of elements. 

Models on such infinite permutations are of
potential interest in two contexts: 
(1) Rank data is modeled by permutations,
and a nonparametric approach to ranking problems motivates models on
infinite permutations. In parametric 
rank data analysis, models for ``partial'' data,
\ie data in which part of each ranking is censored, are used to model
``rank your favorite $r$ items out of a total of $n$ items''
\citep{Fligner:Verducci:1986}. In this
case, $n$ is the order of the underlying symmetric group $\SG{n}$,
and $r$ the number of uncensored positions.
\citet{Meila:Bao:2008} 
observe that positing a given set of $n$ items
to choose from makes most partial ranking tasks artificial.
They suggest a nonparametric model on $\SGinf$ to represent
more realistic tasks (``rank your favorite $r$ movies'',
as opposed to ``rank your favorite $r$ out of these $n$ movies'').\\
(2) The cycles of an infinite permutation induce a partition of $\mathbb{N}$, and
random permutations hence induce random partitions. The most prominent example
of such a model is without doubt the Chinese Restaurant Process, 
proposed by Pitman and Dubins as a distribution on infinite
random permutations with uniform marginals \cite{Pitman:2006}.

To construct a Bayesian model on $\SGinf$ by means of a projective limit,
we draw on a beautiful construction recently proposed in
representation theory by \citet*{Kerov:Olshanski:Vershik:1993,
Kerov:Olshanski:Vershik:2004}. This approach constructs a
compactification $\VP$ of $\SGinf$ as a projective limit of
finite symmetric groups; \citet{Kerov:Olshanski:Vershik:2004}
refer to the elements of $\VP$ as ``virtual permutations''.
We construct a Bayesian model by endowing each of the finite
groups with a parametric model based on the Cayley distance,
due to \citet{Fligner:Verducci:1986}. We then give conditions
under which the projective limit concentrates on the subset $\SGinf$.

\subsubsection{Projective Limits of Symmetric Groups}

The projective limit of \citet{Kerov:Olshanski:Vershik:2004} 
assembles the symmetric groups
$\SG{1},\SG{2},\dots$ sequentially, and we hence choose the index set
$D=\lbrace[n]|n\in\mathbb{N}\rbrace$, ordered by inclusion. To define
a projective system, we need a suitable notion of projection
mappings. Given the choice of $D$, it is sufficient to consider
mappings $\fJI$ for $J=[n+1]$ and $I=[n]$, which we more conveniently
denote $\fnn$. Intuitively, the projection should remove the entry
$n+1$ from permutations in $\SG{n+1}$ -- which raises the question
of how to consistently delete $n+1$ from, say, 
$\tiny\bigl(\begin{matrix} 1 & 2& 3& 4\\ 3 & 1 & 4 & 2\end{matrix}\bigr)$
to obtain a valid element of $\SG{3}$. An appropriate projector can be
defined as follows.
Any permutation $\pi\in\SG{n}$ admits a unique representation of the form
\begin{equation}
  \label{eq:permutations:encoding}
  \pi = \sigma_{k_1}(1)\sigma_{k_2}(2)\cdots\sigma_{k_n}(n) \;,
\end{equation}
where $k_i$ are natural numbers with $k_i\leq i$, and 
$\sigma_i(j)$ denotes the transposition of $i$ and $j$.
Hence, the vector $(k_1,\dots,k_n)$ is an encoding of $\pi$.
Let $\psi_n$ be the corresponding mapping defined by 
$\psi_n:\pi\mapsto(k_1,\dots,k_n)$.
Due to the constraint $k_i\leq i$, which makes the
encoding of $\pi$ unique, the mapping is
a bijection $\SG{n}\rightarrow\prod_{m\leq n}[m]$,
and a homeomorphism of Polish spaces if both $\SG{n}$
and the image space are endowed with the discrete topology.
On the encoding $\psi_n\pi$, we
can easily define a natural projection by deleting the
last element, \ie as $(k_1,\dots,k_n,k_{n+1})\mapsto(k_1,\dots,k_n)$,
which is just the product space projector
$\projector_{\ind{[n+1][n]}}$
The projectors $\fnn$ are then chosen as the induced
mappings on the groups, hence 
$\fnn:=\psi_n^{-1}\circ\projector_{\ind{[n+1][n]}}\circ\psi_{n+1}$.
The following diagram commutes:
\begin{diagram}[LaTeXeqno]
  \label{diagram:proj:symmetric:groups}
  \SG{n+1} & \rTo^{\quad\psi_{n+1}\quad} & \prod_{m\leq n+1}[m]\\
  \dTo^{\fnn} & & \dTo_{\projector_{\ind{[n+1][n]}}}\\
  \SG{n} & \rTo^{\psi_n} & \prod_{m\leq n}[m]
\end{diagram}
The projectors $\fnn$ have a natural group-theoretic interpretation:
They remove the element $n+1$ from the cycle containing it.
Intuitively speaking, application of
$\sigma_{k_1}(1),\dots,\sigma_{k_n}(n)$ from the left consecutively 
constructs the
cycles of $\pi\in\SG{n+1}$, pending insertion of the final element
$n+1$ into its respective cycle.
This last step is ommitted
by deleting $\sigma_{k_{n+1}}(n+1)$.
The definition of $\fnn$ is consistent with the
Chinese Restaurant Process \citep{Pitman:2006}: The image measure of the CRP marginal
distribution on $\SG{n+1}$ under $\fnn$ is the CRP marginal on $\SG{n}$.

The projections $\fnn$ determine
$(\xspaceD,\topD) := \plim\famD{\SG{n},\top_n,\fnn}$. 
In the projective limit topology
$\topD$, $\xspaceD$ is 
totally disconnected and compact. In analogy to the finite groups,
$\xspaceD$ is homeomorphic to the product space
$\prod_{m\in\mathbb{N}}[m]$ under the encoding map
$\psi_{\indD}:(\sigma_{k_1}(1)\sigma_{k_2}(2)\cdots)\mapsto(k_1,k_2,\dots)$.
The infinite symmetric group $\SGinf$ is a dense countable
subset of $\xspaceD$.
Unlike $\SGinf$, the space $\xspaceD$ is not a group,
whereas conversely, $\SGinf$ is not compact.
The projective limit $\xspaceD$ can thus be regarded
as an abstract compactification of $\SGinf$.
Elements $\pi\in\xspaceD$ are representable in the form 
\begin{equation}
  \pi = \sigma_{k_1}(1)\sigma_{k_2}(2)\cdots
\end{equation}
and can be interpreted as operations that iteratively permute
pairs of elements ad infinitum. If and only if $\pi\in\SGinf$,  
this process ``breaks off'' after a finite number $n$ of steps,
and the encoding of $\pi$ is of the form
${\psi(\pi)=(k_1,\dots,k_n,n+1,n+2,\dots)}$.

\subsubsection{Distance-Based Models}

A widely used class of probability distributions on finite symmetric
groups are location-scale models of the form
\begin{equation}
  \label{eq:distance:based:model}
p(\pi|\theta,\pi_0)=\frac{1}{Z(\theta)}e^{-\theta d(\pi,\pi_0)} \;,
\end{equation}
where $d$ is a metric on $\SG{n}$. Such models are commonly
referred to as \emph{distance-based} models in the rank data
literature.
\citet{Fligner:Verducci:1986} considered the intersection of this
class with another type of model: Let $W^{(1)},\dots,W^{(k)}$ be a set
of statistics on $\SG{n}$ such that the random variables
$W^{(1)}(\pi),\dots,W^{(k)}(\pi)$ are independent if $\pi$ is
distributed uniformly.
Define a parametric model on $\SG{n}$ as
\begin{equation}
  \label{eq:F:V:model}
  p(\pi|\theta):=\frac{1}{Z(\theta)}
  \exp\bigl(-\sum_{j=1}^k \theta^{(j)}W^{(j)}(\pi)\bigr)
  \qquad\theta\in\mathbb{R}^k \;.
\end{equation}
The moment-generating function $M(\theta)$ of this model
is the product $M(\theta)=\prod_{j}M^{(j)}(\theta^{(j)})$ over the
moment-generating functions $M^{(j)}$ of the
variables $W^{(j)}(\pi)$. Hence, the partition function $Z(\theta)$
of the model factorizes as $Z(\theta)=\prod_j Z^{(j)}(\theta^{(j)})
=\prod_j M^{(j)}(-\theta^{(j)})$, and the statistics
$W^{(j)}(\pi)$ are independent random variables if 
$\pi$ is distributed uniformly.
\citet{Fligner:Verducci:1986} show that this independence is
preserved if the model $p(\pi|\theta)$ is substituted for the
uniform distribution.
The models \eqref{eq:F:V:model} coincide with distance-based models of the
form \eqref{eq:distance:based:model} whenever the metric 
$d(\pi,\pi_0)$, for $\pi_0$ the neutral permutation and $\pi$
uniform on $\SG{n}$, 
is decomposable as a sum of independent random variables
$d(\pi,\pi_0)=\sum_j W^{(j)}(\pi)$. The decomposable metrics
considered in \citep{Fligner:Verducci:1986} are the Kendall
metric and the Cayley metric. We will consider the Cayley
metric $d_{\ind{C}}(\pi,\pi')$ in the following, defined as the minimal number of
(not necessarily adjacent) transpositions required to transform
$\pi$ into $\pi'$. For the neutral permutation $\pi_0$, this
metric satisfies $d_{\ind{C}}(\pi,\pi_0)=(n-\#\text{cycles}(\pi))$.
Consequently, $d_{\ind{C}}$ can be decomposed into a sum of statistics
which count the positions in $\pi$, but discount one element
of each cycle. We hence choose
\begin{equation}
  W^{(j)}:=1-\mathbb{I}\lbrace{k_j=j}\rbrace = \left\{\begin{matrix}0 & & j\text{
    smallest element on its cycle}\\
  1 & & \text{otherwise}\end{matrix}\right\} \;.
\end{equation}
The definition differs slightly from the one given by
\citet{Fligner:Verducci:1986},
who discount the largest element
on each cycle instead. The smallest element is
a more adequate choice in the context of nonparametric constructions,
as it is well-defined for infinite cycles.

Independence of the variables $W^{(j)}$ is easily 
verified by constructing a uniform random permutation
$\pi$ by means of $n$ iterations of the 
Chinese Restaurant Process: In step $j$, 
element $j$ is inserted into the current permutation
by uniformly sampling $U\in [j]$. If $U=j$, the
element is placed on a new cycle. Otherwise, it is
inserted to the immediate right of element $U$ on
the respective cycle. The variables $W^{(j)}$ are 
hence indeed independent and Bernoulli distributed.
Since only $U=j$ creates a new cycle, the Bernoulli
parameters are $\frac{j-1}{j}$.


\def\pTheta{p^{\theta}}

The natural conjugate prior $\PTheta_n$ of the generalized
Cayley model on $\SG{n}$ is given by the conditional measure
$\PTheta_n[\Theta_n|Y_n]$ with density
\begin{equation}
  \pTheta_n(\theta_n|\lambda,\gamma_n)
  :=
  \frac{1}{K_n(\lambda,\gamma_n)}
  \exp\bigl(\sum_j\gamma^{(j)}\theta^{(j)}
  -\lambda\log Z_n(\theta_n)\bigr) \;.
\end{equation}

By means of Lemma \ref{lemma:cond:proj:criterion:2}, we can show:
\begin{lemma}[Proof: App.~\ref{sec:proofs:examples}]
  \label{lemma:example:cayley:projective:families}
  $\famD{P_n[\pi_n|\Theta_n]}$ and
  $\famD{\PTheta_n[\Theta_n|Y_n]}$ are projective families of
  conditional distributions.
\end{lemma}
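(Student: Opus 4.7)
The plan is to verify both projectivity statements by applying Lemma \ref{lemma:cond:proj:criterion:2} (Criterion 2), after transporting everything through the encoding homeomorphism $\psi_n : \SG{n} \to \prod_{m \le n}[m]$. Since $\psi_n$ is a bijection of Polish spaces and the projectors $\fnn$ are by construction the pullbacks of the product projectors $\projector_{\ind{[n+1][n]}}$ along $\psi$ (see diagram \eqref{diagram:proj:symmetric:groups}), the family of generalized Cayley measures on $\lbrace\SG{n}\rbrace$ is projective if and only if the transported family on $\lbrace\prod_{m \le n}[m]\rbrace$ is projective. On the latter system, the carrier measures can be chosen as product counting measures, so the hypotheses of Lemma \ref{lemma:cond:proj:criterion:2} are satisfied, and all that remains is to check the summation identity \eqref{eq:lemma:criterion:2:condition}.

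The key observation, which reduces the computation to a one-line exercise, is that in the encoding $(k_1, \dots, k_n) = \psi_n(\pi)$, the statistic $W^{(j)}$ depends only on $k_j$. Consequently the sampling density factorises:
\begin{equation*}
p_n(\pi_n \mid \theta_n) \;=\; \prod_{j=1}^n \frac{\exp(-\theta^{(j)} W^{(j)}(k_j))}{Z^{(j)}(\theta^{(j)})}\,,
\end{equation*}
with $Z_n(\theta_n) = \prod_j Z^{(j)}(\theta^{(j)})$, as established by \citet{Fligner:Verducci:1986}. Summing over the last coordinate $k_{n+1} \in [n+1]$ leaves only the factor indexed by $j=n+1$, which integrates to $1$ against the normalizer $Z^{(n+1)}(\theta^{(n+1)})$, so that the marginal over $k_{n+1}$ equals $p_n(\pi_n \mid \theta_n)$. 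Identifying $\projector_{\ind{[n+1][n]}}\theta_{n+1}$ with $\theta_n = (\theta^{(1)},\dots,\theta^{(n)})$, this verifies \eqref{eq:lemma:criterion:2:condition} for the sampling family.

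For the prior family, the same factorisation of $Z_n$ implies that the natural conjugate density also splits as
\begin{equation*}
p^\theta_n(\theta_n \mid \lambda, \gamma_n) \;=\; \prod_{j=1}^n \frac{\exp\!\bigl(\gamma^{(j)}\theta^{(j)} - \lambda \log Z^{(j)}(\theta^{(j)})\bigr)}{K^{(j)}(\lambda, \gamma^{(j)})}\,,
\end{equation*}
with $K_n(\lambda,\gamma_n) = \prod_j K^{(j)}(\lambda,\gamma^{(j)})$ whenever the one-dimensional normalizers are finite, which is the standard condition for the natural conjugate prior to be well-defined. The hyperparameter projector $\hJI$ is taken to be $\mathrm{Id}_{\mathbb{R}_+} \otimes \projector_{\ind{[n+1][n]}}$, consistent with the exponential-family setup of Section \ref{sec:conjugacy}. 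Integrating over $\theta^{(n+1)} \in \mathbb{R}$ then again cancels the $(n+1)$-th factor, verifying \eqref{eq:lemma:criterion:2:condition} for the prior family.

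The principal obstacle, though more bookkeeping than analytical difficulty, is keeping the three different projective structures aligned: the group projectors $\fnn$ on $\SG{n}$, the projectors $\projector_{\ind{[n+1][n]}}$ on the encoding product space, and the induced projectors on parameter and hyperparameter spaces $\tspace_n = \mathbb{R}^n$ and $\yspace_n = \mathbb{R}_+ \times \mathbb{R}^n$. Once the encoding $\psi_n$ is used to make explicit that all three systems are product projectors on the last coordinate, the factorisation of both densities along the same coordinate index $j$ makes the application of Lemma \ref{lemma:cond:proj:criterion:2} immediate in both cases.
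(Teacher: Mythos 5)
Your proposal is correct and follows essentially the same route as the paper's proof: both transport the sampling family through the encoding $\psi_n$ so that Lemma \ref{lemma:cond:proj:criterion:2} applies on the product spaces $\prod_{m\le n}[m]$, both exploit the coordinatewise factorisation of the density (the paper's explicit sum $\bigl(\sum_{m=1}^n e^{-\theta^{(n+1)}}\bigr)+1$ over the fibre of $\fnn$ is exactly your cancellation of the factor $j=n+1$ against $Z^{(n+1)}$), and both handle the prior directly on $\mathbb{R}^{n}$ via the factorisation $K_n=\prod_j K^{(j)}$.
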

As projective limits of the two families, we obtain 
the regular conditional probabilities
$\PD[\XD|\ThetaD]=\plim\famD{\P_n[X_n|\Theta_n]}$ on the projective
limit space $\xspaceD=\VP$ of virtual permutations, and
$\PThetaD[\ThetaD|\YD]=\plim\famD{\PTheta_n[\Theta_n|Y_n]}$
on the parameter space $\tspaceD=\mathbb{R}^{\mathbb{N}}$. 

\subsubsection{Pullback to $\SG{\infty}$}

For nonparametric applications, infinite random permutations
are of particular interest, \ie observations generated by the model
should almost surely take values $\XD\in\SGinf$. The model constructed
above can be guaranteed to concentrate on $\SGinf$ by a suitable
choice of pullbacks.

Because the pullback model is supposed to concentrate on $\SGinf$, the Borel
embedding $\phi$ should be the canonical inclusion 
$\phi:\SGinf\hookrightarrow\VP$. The form of the corresponding embedding $\inclusionT$
on parameter space is less apparent:
We observe that an infinite permutation $\pi$ is in $\SGinf$ if and only if the
infinite sequence $(W^{(j)})_j$ contains only a finite number of ones.
For a given parameter $\theta^{(j)}$, the probability that $W^{(j)}(\pi)=1$ is
\begin{equation}
  \mbox{Pr}\lbrace W^{(j)}(\pi)=1\rbrace
  =
  \frac{(j-1)e^{-\theta^{(j)}}}{1+(j-1)e^{-\theta^{(j)}}}
  \;=:
  q_j(\theta^{(j)}) \;,
\end{equation}
and we define a mapping $q:\mathbb{R}^{\mathbb{N}}\rightarrow (0,1)^{\mathbb{N}}$ by
$q(\theta):=(q_j(\theta_j))_{j\in\mathbb{N}}$.
By the Borel-Cantelli lemma, only a finite number of $W^{(j)}$ take value one
if and only if $q(\theta)\in\ell_1$ (cf.\ 
proof of Lemma \ref{lemma:example:cayley:concentration}).
Let $\inclusion_{\ell_1}$ be the canonical inclusion $\ell_1(0,1)\hookrightarrow\mathbb{R}^{\mathbb{N}}$,
and define $\inclusionT:=\inclusion_{\ell_1}\circ q$. 
The pullback parameter space is then
\begin{equation}
        \ttspace
        :=
        \inclusionT^{-1}(0,1)^{\mathbb{N}}
        =
        \lbrace \theta\in\mathbb{R}^{\mathbb{N}}|\;\|q(\theta)\|_{\ell_1}<\infty\rbrace \;.
\end{equation}

A pullback of the model under $\phi$ and $\inclusionT$ is justified by the
following concentration result:
\begin{lemma}[Proof: App.~\ref{sec:proofs:examples}]
  \label{lemma:example:cayley:concentration}
  With $\phi$ and $\inclusionT$ defined as above:
  \begin{enumerate}
  \item The mappings $\phi$ and $\inclusionT$ are Borel embeddings.
  \item $\PD^{\ast}[\SGinf|\Theta=\theta]=1$ if and only if $q(\theta)\in\ell_1(0,1)$.
  \item $\PD^{\theta,\ast}[\ttspace|\YD=(\lambda,\gammaD)]=1$ 
    if $\gammaD\in\ttspace$.
  \end{enumerate}
\end{lemma}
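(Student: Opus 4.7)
The plan is to dispatch the three parts separately; all three reduce to relatively standard arguments once the right facts are identified. For (1), I verify each embedding in turn. The group $\SGinf$ sits in the compact Polish space $\VP$ as a countable dense subset, so endowed with the discrete topology it embeds continuously and injectively; its image is countable and hence Borel in the Hausdorff space $\VP$, and bimeasurability between the two standard Borel spaces is automatic (as cited at the end of Sec.~\ref{sec:stochproc:stochproc}). For $\inclusionT=\inclusion_{\ell_1}\circ q$: each coordinate $q_j(t)=(j-1)e^{-t}/(1+(j-1)e^{-t})$ is, for $j\geq 2$, a strictly monotonic continuous bijection $\mathbb{R}\to(0,1)$, so $q:\mathbb{R}^{\mathbb{N}}\to(0,1)^{\mathbb{N}}$ is a homeomorphism onto its image; the set $\ell_1(0,1)=\bigcup_{M\in\mathbb{N}}\lbrace x:\sum_j x_j\leq M\rbrace$ is Borel in $(0,1)^{\mathbb{N}}$, so $\inclusion_{\ell_1}$ is a Borel embedding and the composition $\inclusionT$ is too.

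For (2), the key input is the Fligner--Verducci independence property noted after \eqref{eq:F:V:model}: conditional on $\ThetaD=\theta$, the coordinates $W^{(j)}$ are independent Bernoulli random variables with success probability $q_j(\theta^{(j)})$. A virtual permutation $\pi\in\VP$ lies in $\SGinf$ precisely when its encoding $(k_1,k_2,\dots)$ stabilises, which by the definition of $W^{(j)}$ amounts to $W^{(j)}(\pi)=0$ for all but finitely many $j$. Both halves of the Borel--Cantelli lemma applied to the independent events $\lbrace W^{(j)}=1\rbrace$ then give the required equivalence with $q(\theta)\in\ell_1$.

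For (3), the factorisation $Z(\theta)=\prod_j Z^{(j)}(\theta^{(j)})$ carries over to the conjugate prior, so the components $\Theta^{(j)}$ are a priori independent with one-dimensional densities proportional to $\exp(\gamma^{(j)}\theta^{(j)}-\lambda\log Z^{(j)}(\theta^{(j)}))$. By Tonelli it suffices to prove $\sum_j\mean{q_j(\Theta^{(j)})}<\infty$, since a nonnegative series with finite expected total is almost surely finite. I would try to establish a uniform-in-$j$ comparison of the form $\mean{q_j(\Theta^{(j)})}\leq C(\lambda)\,q_j(\gamma^{(j)})$ via the substitution $u=q_j(\theta)$ in the one-dimensional integral, which collapses the $j$-dependent factor $(j-1)$ symmetrically in $q_j$ and $Z^{(j)}$; summability then follows from the hypothesis $\gammaD\in\ttspace$.

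The main obstacle will be this uniform moment bound in (3): the $j$-dependence sits inside both $q_j$ and $Z^{(j)}$, and a naive computation risks producing $j$-dependent constants that destroy summability. Should the direct substitution not yield a clean bound, a Jensen-type argument exploiting the concavity of $\log q_j$ on the relevant range of $\theta$ should still work, since for $\gammaD\in\ttspace$ the prior mass concentrates on regions where $q_j$ is small.
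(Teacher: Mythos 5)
Your parts (1) and (2) track the paper's proof almost exactly: for (1) both arguments reduce to the facts that a canonical inclusion of a Borel subset is a Borel embedding and that a measurable bijection between standard Borel spaces is bimeasurable (your observation that $\ell_1(0,1)$ is Borel as a countable union of closed sublevel sets of the partial sums plays the role of the paper's tail-$\sigma$-algebra remark, and your coordinatewise monotonicity argument for $q$ is if anything more explicit than the paper's ``injective and continuous''); for (2) both proofs are the two halves of Borel--Cantelli applied to the independent events $\lbrace W^{(j)}=1\rbrace$, using the Fligner--Verducci independence under the model. No issues there.

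Part (3) is where you genuinely diverge, and where your argument is incomplete. The paper does not attempt any moment bound: it observes that $\lbrace q(\ThetaD)\in\ell_1\rbrace$ is a tail event of the independent coordinates $\ThetaD^{(j)}$, invokes Kolmogorov's zero--one law to conclude the probability is $0$ or $1$, and then rules out $0$ by an argument about the divergence of the prior means $\mean{\ThetaD^{(j)}}$. Your first-moment strategy --- show $\sum_j\mean{q_j(\Theta^{(j)})}<\infty$ and conclude a.s.\ convergence by Tonelli --- is sound in principle and, if completed, would be more quantitative than the paper's route (it needs no zero--one law at all). But the entire content of part (3) then sits in the bound $\mean{q_j(\Theta^{(j)})}\leq C(\lambda)\,q_j(\gamma^{(j)})$, which you conjecture rather than prove, and this is not a routine estimate. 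Carrying out your own suggested substitution $u=q_j(\theta)$ shows that under the conjugate prior the variable $q_j(\Theta^{(j)})$ has a Beta law whose parameters are $(-\gamma^{(j)},\,\gamma^{(j)}+\lambda)$ and hence do \emph{not} depend on $j$ except through $\gamma^{(j)}$; its mean is $-\gamma^{(j)}/\lambda$, which is a summability condition on the sequence $\gammaD$ itself, not on $q(\gammaD)$. Converting that into the hypothesis $\gammaD\in\ttspace$, i.e.\ $\sum_j q_j(\gamma^{(j)})<\infty$, is exactly the step your proposal leaves open, and the fallback ``Jensen-type argument'' is not specific enough to close it. So: (1) and (2) are correct and essentially the paper's proof; (3) is a legitimately different strategy with a concrete unproven estimate at its core.
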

The entire projective limit Bayesian model can therefore
be pulled back under $\phi$ and $\inclusionT$ 
in the sense of \eqref{diagram:pullback:model}.
The pullback model, given by conditionals $\tP[\tX|\tTheta]$ and
$\tPTheta[\tTheta|\tY]$, is parametrized by hyperparameter sequences
$\gammaD$ satisfying $\|q(\gammaD)\|_{\ell_1}<\infty$. The parameter
variable $\tTheta$ almost surely takes values $\theta$ satisfying
$\|q(\theta)\|_{\ell_1}<\infty$, and the observation
variable satisfies $\tX\in\SGinf$ almost surely. 
The sufficient statistics $\SI=S_n$ of the finite-dimensional
models, matching the exponential family representation
\eqref{eq:expfam:densities}, are simply given by $S^{(j)}(\pi)=-W^{(j)}(\pi)$.
By Theorem \ref{theorem:proj:lim:sufficient:field} and
Proposition \ref{theorem:suffstat:pullback}, a sufficient
statistic $\tS:\SGinf\rightarrow\lbrace 0,1\rbrace^{\mathbb{N}}$
of the pullback model is therefore given by the countable vector
$\tS(\pi)$ with components
$\tS^{(j)}(\pi)=\mathbb{I}\lbrace k_j=j\rbrace-1$.
By Theorems \ref{theorem:conjugate:projective:limits}
and \ref{theorem:conjugate:pullbacks}, the model is conjugate.
In summary:
\begin{corollary}
  The pullbacks $\tP[\tX|\tTheta]$ and $\tPTheta[\tTheta|\tY]$
  define a projective limit Bayesian model with
  hyperparameter space $\mathbb{R}_{>0}\times\ttspace$, parameter
  space $\ttspace$, and observation space $\SGinf$. 
  The sequence $\tS$ with components $\tS^{(j)}(\pi)=\mathbb{I}\lbrace k_j=j\rbrace-1$
  is a sufficient statistic of the model, and posterior updates
  under observations $\pi^n=(\pi^{(1)},\dots,\pi^{(n)})$, with $\pi^{(j)}\in\SGinf$,
  are given by
  \begin{equation}
    \label{eq:caley:pullback:post:index}
    \tTn(\pi^n,(\lambda,\tgamma))=\Bigl( n+\lambda, \tgamma+\sum_{j\in\mathbb{N}}\tS^{(j)}(\pi^{(j)})\Bigr) \;.     
  \end{equation}
\end{corollary}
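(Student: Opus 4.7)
The plan is to assemble the corollary directly from the machinery developed earlier in the paper, treating it as a verification that every hypothesis of the general theorems is satisfied. Lemma \ref{lemma:example:cayley:projective:families} already supplies the two projective families of conditionals, so Theorem \ref{theorem:projlim:conditionals} applied twice produces the projective limit Bayesian model with conditionals $\PD[\XD|\ThetaD]$ and $\PThetaD[\ThetaD|\YD]$ on $(\xspaceD,\tspaceD,\yspaceD)=(\VP,\mathbb{R}^{\mathbb{N}},\mathbb{R}_+\times\mathbb{R}^{\mathbb{N}})$. Lemma \ref{lemma:example:cayley:concentration} then certifies that $\phi:\SGinf\hookrightarrow\VP$ and $\inclusionT=\inclusion_{\ell_1}\circ q$ are Borel embeddings and that the required outer measures equal one, so Proposition \ref{lemma:pullback:conditionals} delivers the pullbacks $\tP[\tX|\tTheta]$ and $\tPTheta[\tTheta|\tY]$ on precisely the claimed spaces $\SGinf$, $\ttspace$, and $\mathbb{R}_{>0}\times\ttspace$, matching diagram \eqref{diagram:pullback:model}.

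For the sufficient statistic, I would first check that the family $\famD{S_n}$ with $S_n^{(j)}(\pi)=-W^{(j)}(\pi)=\mathbb{I}\lbrace k_j=j\rbrace-1$ is projective across $n$. Because the projector $\fnn$ acts on the encoding \eqref{eq:permutations:encoding} as deletion of $k_{n+1}$ (diagram \eqref{diagram:proj:symmetric:groups}), the components $S_{n+1}^{(j)}$ for $j\leq n$ descend to $S_n^{(j)}$ under $\fnn$, which is precisely the projectivity relation of App.~\ref{sec:background}. Hence $\SD:=\plim\famD{S_n}:\VP\rightarrow\lbrace 0,1\rbrace^{\mathbb{N}}$ exists and, by Theorem \ref{theorem:proj:lim:sufficient:field}, is sufficient for the projective limit model. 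Its pullback under $\phi$ coincides with the stated $\tS$, and Proposition \ref{theorem:suffstat:pullback} then makes $\tS$ sufficient for $\tP[\tX|\tTheta]$.

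For conjugacy and the explicit posterior index, the finite-dimensional marginals are exponential family models with natural conjugate priors of the form \eqref{eq:expfam:densities}; their posterior indices are $T_n^{(n)}(\pi_n^n,(\lambda,\gamma_n))=\bigl(\lambda+n,\gamma_n+\sum_k S_n(\pi_n^{(k)})\bigr)$. Since both the sufficient statistics $\famD{S_n}$ and the priors form projective families, Corollary \ref{corollary:expfam} applies verbatim and gives the projective limit posterior index $\TnD(\pi^n,(\lambda,\gammaD))=\bigl(\lambda+n,\gammaD+\sum_k\SD(\pi^{(k)})\bigr)$. Pulling back under $\phi$ and $\inclusionT$ via Theorem \ref{theorem:conjugate:pullbacks} yields \eqref{eq:caley:pullback:post:index} and confirms conjugacy of the nonparametric model.

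The main obstacle is ensuring that the posterior hyperparameter produced by \eqref{eq:caley:pullback:post:index} actually lies in $\ttspace$, so that Proposition \ref{lemma:pullback:conditionals} can be invoked at the posterior stage (not only the prior stage) and the pullback of the posterior exists. This amounts to showing that $\tgamma+\sum_j\tS^{(j)}(\pi^{(j)})$ still satisfies $\|q(\cdot)\|_{\ell_1}<\infty$ whenever $\tgamma$ does and $\pi^{(j)}\in\SGinf$. Because each $\pi^{(j)}\in\SGinf$ contributes only finitely many nonzero components to $\sum_j\tS^{(j)}(\pi^{(j)})$, the added perturbation lies in a finite-dimensional subspace and $q$ remains summable; this is where the restriction to $\SGinf$-valued observations (as opposed to $\VP$-valued ones) is essential. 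A secondary technical point is that projectivity of $\SD$ under the deletion projector $\fnn$ requires $\lbrace 0,1\rbrace^{\mathbb{N}}$ to be equipped with the coordinate projections matching those of the encoding $\psi_{\indD}$, which is immediate from diagram \eqref{diagram:proj:symmetric:groups}.
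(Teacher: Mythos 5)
Your proposal is correct and takes essentially the same route as the paper, whose own justification for this corollary is precisely the surrounding discussion: Lemma \ref{lemma:example:cayley:projective:families} and Theorem \ref{theorem:projlim:conditionals} for the projective limit model, Lemma \ref{lemma:example:cayley:concentration} for the pullbacks, Theorem \ref{theorem:proj:lim:sufficient:field} with Proposition \ref{theorem:suffstat:pullback} for sufficiency, and Theorem \ref{theorem:conjugate:projective:limits} with Proposition \ref{theorem:conjugate:pullbacks} (via Corollary \ref{corollary:expfam}) for the posterior index. Your closing verification that the updated hyperparameter $\tgamma+\sum_k\tS(\pi^{(k)})$ stays in $\ttspace$ because each $\pi^{(k)}\in\SGinf$ perturbs only finitely many coordinates is a point the paper leaves implicit, and it is a sound and worthwhile addition rather than a departure from its argument.
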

Intuitively, the parameter $\theta^{(j)}$ describes an element-wise concentration.
If all elements of $\thetaI$ are negative in the finite-dimensional model, the
expected value of $\PI[\XI|\ThetaI=\thetaI]$ is an anti-mode \cite{Fligner:Verducci:1986}.
The larger the value of $\theta^{(j)}$, the higher the cost of deviation from
the neutral permutation at position $j$. If such a deviation is observed in $\pi$,
$W^{(j)}(\pi)=1$, and \eqref{eq:caley:pullback:post:index} describes a decrease
of the expected concentration at $j$ in the posterior.

The definition of the sufficient statistics used here closely follows the 
customary presentation in the rank data literature. Alternatively, the model
could be expressed (with a different partition function) in terms of sufficient
statistics $S^{(j)}(\pi)=\mathbb{I}\lbrace k_j=j\rbrace$, which emphasizes the
close relation of the model to the representation \eqref{eq:permutations:encoding}.
Similarly, it may be useful to reparametrize the model by $\vartheta:=q(\theta)$,
such that concentration on $\SGinf$ occurs for convergent parameter sequences.
In its present form, the parameter sequence has to diverge instead
-- concentration on $\SGinf$ requires
$W^{(j)}=0$ eventually, and since the variables $W^{(j)}$ are independent,
this occurs almost surely only for diverging concentration parameters.

\pdfoutput=1

\section{Discussion}

Our results show that conjugate nonparametric Bayesian models, when represented as projective
limits, reflect much of the structure of their parametric counterparts.
In particular, their sufficient statistics and the
updates of posterior parameters are projective limits,
and hence the precise infinite-dimensional
analogues, of the respective functions associated with the marginals.

\subsection{Implications for Model Construction}

The results suggest a construction approach for conjugate nonparametric
models roughly analogous
to the parametric case: On a given type of data, define
a sufficient statistic measuring those properties of the data
considered important; define the corresponding exponential family
model and its canonical conjugate prior; and extend these to an 
infinite-dimensional model by means of a projective limit and a pullback.
In many cases, such constructions may draw on
existing projective limit constructions from various fields of mathematics,
and on well-studied exponential family models to be used
as marginals. 
The construction in
The main technical hurdles in such a construction are
the definition of a suitable projective system, and the proof of
existence of the pullback. The latter step can usually be expected 
to be the more demanding one, since our representation uses the pullback
as a convenient general way to formalize almost sure
properties of the random paths of a stochastic process. This formalization
is particularly useful for our purposes, as it allows to establish
results on sufficiency and conjugacy \emph{assuming} that the
pullback exists for a suitable subset of parameters. 
Actually verifying its existence for a given model, however,
may involve any of the subtleties of stochastic process theory.
As examples such as the Dirichlet process demonstrate, there is often
a compellingly simple intuition as to how a stochastic process model 
behaves, but establishing the mathematical accuracy of this intuition
can pose technical challenges.


\subsection{Interactions in the Posterior}

We close with a heuristic observation that
may warrant rigorous investigation in the future.
If a few exceptional cases are neglected for the sake of argument,
our results imply roughly speaking that the class of conjugate
nonparametric Bayesian models corresponds to those with conjugate
exponential family marginals, and hence to those admitting a
representation of the form \eqref{eq:exp:fam:posterior:index:pullback}.
The posterior updates of the marginals are described by sufficient
statistics whose image has fixed, finite dimension.
For the nonparametric model, these updates can be interpreted as follows:
Suppose the sufficient statistics $\SI$ are bivariate, as for example
in covariance estimation. Censored observations are obtained for
index sets $I_1,I_2,\dots\in D$. In the posterior, an
observation at $I_j$ can affect all dimensions $J\in D$,
through any sufficient statistic $S_{\indK}$ with $I_j,J\po K$.
However, even if an infinite number of repeated observations is obtained
for one and the same $I_j\in D$, the interactions described by each 
individual $S_{\indK}$ affect only a finite
subset of posterior dimensions. 
There may be interesting connections here to a family of results
known as \emph{Pitman-Koopman theory}:
Under suitable regularity conditions, a parametric model admits 
a sufficient statistic of dimension finitely bounded with respect
to sample size if and only if it is an exponential family model 
\cite[e.g.][]{Hipp:1974}. Similar results have been obtained for
certain types of L\'{e}vy processes \cite{Kuechler:1982:1,Kuechler:1982:2}.
In summary, it may be possible to characterize conjugate nonparametric
Bayesian models as models for
which the complexity of interactions in the posterior
is finitely bounded, in a manner which remains to be made precise.

\appendix
\pdfoutput=1

\section{Projective Limits and Pullbacks}
\label{sec:background}

Both projective limits (inverse limits) and pullbacks
are standard techniques in pure mathematics, and projective limits of
probability measures are widely used in probability theory. Since
neither is a standard topic in statistics, though, 
this appendix provides a brief survey of some relevant definitions
and results.

A comprehensive reference on general projective limits is Bourbaki's
\emph{Elements of mathematics}; see \citet{Bourbaki:1968,Bourbaki:1966}
for projective limits of spaces and functions. 
Key references on projective limits of measures are
\citet{Bourbaki:2004},
\citet{Rao:2005,Rao:1971},
\citet{Mallory:Sion:1971}, \citet{Choksi:1958} and
\citet{Schwartz:1973}.
On pullbacks of measures, cf.\
\citet[][Vol.\ I]{Fremlin:MT}.
Both projective limits and pullbacks
are common topics in category theory 
\citep[e.g.][]{MacLane:1998}.

\subsection{Projective Systems and their Limits}
\label{sec:projlim}

A projective limit assembles a mathematical object from a system of 
simpler objects. The assembled object may be an infinite-dimensional space constructed
from finite-dimensional subspaces, a group constructed from subgroups,
a measure assembled from its marginals, or a function defined by
combining functions on subspaces. How the objects are ``glued together''
is defined by specifying a system of mappings, denoted $\fJI$ in
the following, which connect ``larger'' objects to ``smaller'' ones.
These mappings generalize the notion of a projection in a product space.
The notion of ``larger'' and ``smaller'' is defined in terms of a
partial order on the set $D$ of object indices. To admit a proper
definition of a limit, and hence of an extension to infinity, the
index set needs to be directed.

Let $D$ be a set and $\po$ a partial order relation on $D$. The
set is called {\em directed} if for any two elements $I,J\in D$,
there is a $K\in D$ such that $I\po K$ and $J\po K$. 
Let $\lbrace\xspaceI\rbrace_{I\in D}$ 
be a family of sets indexed by a directed set $D$.
Require that for any pair $I\po J$ in $D$, there is a generalized
projection mapping
$\fJI:\xspaceJ\rightarrow\xspaceI$,
\ie a mapping satisfying \eqref{eq:def:canonical:map}.
Then $\lbrace \xspaceI,\fJI | I\po J\in D\rbrace$, in short
$\famD{\xspaceI,\fJI}$, is called a
{\em projective system}.
Define a space $\xspaceD$ as follows:
Let $\lbrace \xI | I\in D\rbrace$ be a collection consisting of a single point each from
the spaces $\xspaceI$, for which
\begin{equation}
  \label{eq:def:projlim:xE}
  \xI = \fJI\xJ \qquad\text{ whenever } I\po J\;.
\end{equation}
Identify any such collection with a point $\xD$, and let $\xspaceD$
be the set of all such points. Then $\xspaceD$ is called the
{\em projective limit} of the system. 
The functions $\fI:\xspaceD\rightarrow\xspaceI$ defined by
$\xD\mapsto\xI$ are called {\em canonical mappings}.

The projective limit $\xspaceD$ is a subset of the product space
$\prod_{I\in D}\xspaceI$. We write $\projectorI$ for the canonical
projection $\projectorI:\prod_{I\in D}\xspaceI\rightarrow\xspaceI$.
The canonical mappings are just the restrictions
$\fI=\projectorI\vert_{\xspaceD}$ of the projections to the projective
limit space. The product space may be interpreted as the set of
all functions $x$ with domain $D$ that take values
$x(I)\in\xspaceI$. Consequently, the projective limit
space is precisely the subset of those functions which commute
with the mappings $\fJI$, in the sense that
$x(I)=(\fJI\circ x)(J)$ whenever $I\po J$.

If the spaces $\xspaceI$ are endowed with additional
structure, and if the canonical
mappings $\fJI$ are chosen to preserve this structure
under preimages, a corresponding
structure is induced on the projective limit space. Two
examples relevant in the following are 
topological and measurable spaces. Suppose that
each space $\xspaceI$ carries a topology $\topI$
and a $\sigma$-algebra $\borelI$.
The system $\famD{\xspaceI,\topI,\fJI}$
is called a {\em projective system of topological spaces}
if each $\fJI$ is $\topJ$-$\topI$-continuous.
The {\em projective limit topology} $\topD$ is defined as
$\topD:=\top( \fI ; I\in D)$, the coarsest topology which
makes all canonical mappings $\fI$ continuous.
Analogously, $\famD{\xspaceI,\borelI,\fJI}$
is {\em projective system of measurable spaces}
if the $\fJI$ are measurable, and $\borelD:=\sigma( \fI ; I\in D)$
is called the {\em projective limit $\sigma$-algebra}.
If the $\sigma$-algebras are the Borel sets generated by the
topologies $\topI$, then $\borelD=\sigma(\topD)$.
The general theme is that the mappings $\fJI$ are chosen to be
compatible with the structure defined on the spaces $\xspaceI$,
and the projective limit structure is the one generated by the
canonical maps $\fI$.
In a similar manner,
projective limits can be defined for a range of other structures,
such as groups (with homomorphisms $\fJI$), etc. 

Suppose now that two families of spaces
$\famD{\xspaceI}$ and $\famD{\yspaceI}$ are jointly indexed by the 
same directed set $D$, and connected by a family $\famD{w_{\indI}}$ of 
mappings. If the mappings
commute with the projection maps, they define a projective
limit mapping between the respective projective limit spaces.
\begin{lemma}[Projective limits of functions {\citep[][III.7.2]{Bourbaki:1968}}]
  \label{lemma:projlim:mappings}
  Let 
  $\mathcal{D}^x:=\famD{\xspaceI,\fJI}$ 
  and 
  $\mathcal{D}^y:=\famD{\yspaceI,\gJI}$ be two projective
  systems with a common index set $D$. For each $I\in D$, let
  $w_{\indI}:\xspaceI\rightarrow\yspaceI$.
  Require that the mappings satisfy
  \begin{equation}
    \label{eq:def:projlim:mappings}
    \gJI\circ w_{\indJ} = w_{\indI}\circ\fJI \;.
  \end{equation}
  Then there exists a unique mapping
  $w_{\indD}:\xspaceD\rightarrow\yspaceE$ such that
  $\gI\circ w_{\indD} = w_{\indI}\circ\fI$
  for all $I$. In other words, the diagram on the right below
  commutes if and only if the diagram on the left commutes
  for all $I\po J\in D$:
  \begin{diagram}[small]
    \xspaceJ &\rTo^{w_{\indJ}} & \yspaceJ & \qquad\qquad & \xspaceD &\rTo^{w_{\indD}} & \yspaceE\\
    \dTo_{\fJI} & &\dTo_{\gJI} & & \dTo_{\fI} & &\dTo_{\gI}\\
    \xspaceI &\rTo^{w_{\indI}} & \yspaceI & & \xspaceI &\rTo^{w_{\indI}} & \yspaceI
  \end{diagram}
\end{lemma}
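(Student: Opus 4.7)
My plan is to exploit the universal property of the projective limit: to define a map into $\yspaceD$, it suffices to specify a compatible family of maps into each $\yspaceI$. Concretely, recall that $\yspaceD$ can be identified with the set of coherent families $(y_I)_{I\in D} \in \prod_{I\in D}\yspaceI$ satisfying $g_{JI} y_J = y_I$ whenever $I\po J$, with $g_I$ being the ``read off the $I$-th coordinate'' map. So the strategy is: given $\xD\in\xspaceD$, assemble a candidate element of $\yspaceD$ coordinatewise using the $w_I$, and check coherence using hypothesis \eqref{eq:def:projlim:mappings}.

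For the construction, fix $\xD\in\xspaceD$ and set $y_I := w_I(\fI(\xD))$ for each $I\in D$. I want to verify that $(y_I)_{I\in D}$ is a coherent family in the $\yspace$-system, i.e., that $\gJI(y_J)=y_I$ whenever $I\po J$. Using \eqref{eq:def:projlim:mappings} in the form $\gJI\circ w_{\indJ}=w_{\indI}\circ\fJI$, and the standard identity $\fI=\fJI\circ\fJ$ (which is just the statement that $\xspaceD$ consists of coherent tuples, so $\fJI\fJ(\xD)=\fI(\xD)$), I compute
\begin{equation*}
\gJI(y_J)=\gJI(w_{\indJ}(\fJ(\xD)))=w_{\indI}(\fJI(\fJ(\xD)))=w_{\indI}(\fI(\xD))=y_I.
\end{equation*}
Thus $(y_I)_{I\in D}$ defines a unique point in $\yspaceD$, which I call $w_{\indD}(\xD)$. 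By construction $\gI(w_{\indD}(\xD))=y_I=w_{\indI}(\fI(\xD))$, so $\gI\circ w_{\indD}=w_{\indI}\circ\fI$ for every $I$.

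For uniqueness, suppose $w'_{\indD}:\xspaceD\to\yspaceD$ also satisfies $\gI\circ w'_{\indD}=w_{\indI}\circ\fI$ for all $I\in D$. Then for every $\xD\in\xspaceD$ and every $I$, $\gI(w'_{\indD}(\xD))=w_{\indI}(\fI(\xD))=\gI(w_{\indD}(\xD))$. Since the family $\lbrace\gI\rbrace_{I\in D}$ separates points of $\yspaceD$ (the canonical embedding $\yspaceD\hookrightarrow\prod_{I\in D}\yspaceI$ is injective by definition of the projective limit), this forces $w'_{\indD}(\xD)=w_{\indD}(\xD)$.

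There is no real obstacle here: the result is essentially the universal property of the projective limit in the category of sets, and everything reduces to the elementary compatibility check above. If one wants the lemma in richer categories (topological or measurable spaces), continuity or measurability of $w_{\indD}$ follows automatically because $\borelD=\sigma(\fI; I\in D)$ (resp.\ $\topD$ is generated by the $\fI$), so $w_{\indD}$ is measurable/continuous iff every $\gI\circ w_{\indD}=w_{\indI}\circ\fI$ is, which holds when each $w_{\indI}$ is measurable/continuous.
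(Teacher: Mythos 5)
Your proof is correct and is exactly the standard argument: the paper does not prove this lemma itself but defers to Bourbaki III.7.2, and your coordinatewise construction with the coherence check via $\fI=\fJI\circ\fJ$, together with uniqueness from the fact that the canonical maps $\gI$ separate points of $\yspaceD\subset\prod_{I\in D}\yspaceI$, is precisely that proof (and matches the paper's own argument for the measurable-map variant, Lemma \ref{lemma:projlim:measurable:maps}). The codomain written as $\yspaceE$ in the statement is a typo for $\yspaceD$, as you correctly assumed.
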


A number of useful properties of mappings are preserved under
projective limits \citep{Bourbaki:1968,Bourbaki:1966}.
If each $w_{\indI}$ is injective or bijective, then so is $w_{\indD}$.
Projective systems $\mathcal{D}^x$, $\mathcal{D}^y$ of topological
spaces preserve continuity, \ie $w_{\indD}$ is $\topD^x$-$\topD^y$-continuous
if and only if each $w_{\indI}$ is
continuous. Projective systems of measurable spaces
preserve measurability (Lemma \ref{lemma:projlim:measurable:maps}); 
projective systems of algebraic structures preserve
homomorphy, etc. A notable exception is that $w_{\indD}$ need
not be surjective, even if all $w_{\indI}$ are.

In a similar manner, projective limits can be defined for set
functions, and in particular for probability measures $\PI$.
The domains $\xspaceI$ of the maps $w_{\indI}$ above 
are replaced by the $\sigma$-algebras
$\borelI$, and the ranges $\yspaceI$ by $[0,1]$.
We denote by $\fJI(\PJ)$ the image measure under projection,
\ie $\fJI(\PJ)=\PJ\circ\fJI^{-1}$.
\begin{theorem}[Kolmogorov; Bochner \citep{Bochner:1955}]
  \label{theorem:bochner}
  Let $\famD{\xspaceI, \borelI, \fJI}$ be a
  projective system of Polish measurable spaces with countable index set $D$, 
  and $\famD{\PI}$ a family of probability measures on these spaces.
  If the measures commute with projection, that is if
  $\fJI(\PJ)=\PI$ whenever $I\po J$,
  there exists a uniquely defined probability measure $\PD$ on the
  projective limit space $(\xspaceD,\borelD)$ such that
  $\fI(\PD)=\PI$ for all $I\in D$.
\end{theorem}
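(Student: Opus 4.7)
The plan is to prove the Kolmogorov--Bochner theorem by the classical cylinder-algebra argument, exploiting the countability of $D$ and the Polish structure of the marginal spaces. First I would reduce to a sequential projective limit: since $D$ is countable and directed, one can select a cofinal sequence $I_1 \po I_2 \po \cdots$, and the projective limit of a countable directed system is canonically homeomorphic to the projective limit of any cofinal subsequence. This is convenient because sequential projective limits of Polish spaces along continuous projections are again Polish (a countable product of Polish spaces is Polish, and $\xspaceD$ is a closed subset thereof by the commutation identity \eqref{eq:def:projlim:xE}).

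Next I would construct the extension on the cylinder algebra. Let $\mathcal{A} := \bigcup_{I \in D} \fI^{-1}(\borelI) \subset \borelD$; this is an algebra of sets because the family $\famD{\borelI, \fJI^{-1}}$ is directed (given two cylinders based at $I$ and $J$, pass to a common upper bound $K$ and use $\fKI^{-1}$, $\fKJ^{-1}$). Define the set function
\begin{equation}
  \PD(\fI^{-1}(A)) := \PI(A), \qquad A \in \borelI, \quad I \in D.
\end{equation}
Well-definedness is exactly the projectivity hypothesis $\fJI(\PJ)=\PI$: if a cylinder is represented using both $I$ and $J \succeq I$, the two assignments agree. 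Finite additivity on $\mathcal{A}$ is immediate from finite additivity of each $\PI$ after passing to a common index.

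The crux is then countable additivity of $\PD$ on $\mathcal{A}$, which is the only step requiring the Polish assumption. The standard route is by contradiction: assume cylinders $C_n = \fI_n^{-1}(A_n) \in \mathcal{A}$ with $C_n \downarrow \emptyset$ but $\inf_n \PD(C_n) \geq \varepsilon > 0$; by cofinality I may take the indices $I_n$ monotone. Using inner regularity of Radon probability measures on Polish spaces, choose compact $K_n \subset A_n$ with $\PI_n(A_n \setminus K_n) < \varepsilon 2^{-n-1}$, set $\tilde{C}_n := \fI_n^{-1}(K_n)$ and $D_n := \bigcap_{m \leq n} \tilde{C}_m$; then $\PD(D_n) \geq \varepsilon/2$, so each $D_n$ is nonempty. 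Pick $x_n \in D_n$ and apply a diagonal argument: for each fixed $m$, the sequence $(\fI_m x_n)_{n \geq m}$ lies in the compact set $K_m$, so extract nested convergent subsequences with limits $y_m \in K_m$; the limits are compatible under the $\fJI$, so define a point $y \in \xspaceD$ with $\fI_m(y) = y_m$. Then $y \in \bigcap_n \tilde{C}_n \subset \bigcap_n C_n$, contradicting $C_n \downarrow \emptyset$. This diagonalization is the one genuine technical obstacle; everything else is bookkeeping.

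Finally, Carath\'eodory's extension theorem promotes $\PD$ from the algebra $\mathcal{A}$ to a probability measure on $\sigma(\mathcal{A})$. Since the canonical mappings $\fI$ generate $\borelD$, we have $\sigma(\mathcal{A}) = \borelD$, and by construction $\fI(\PD) = \PI$ for every $I \in D$. Uniqueness is the standard $\pi$-$\lambda$ argument: any two probability measures on $(\xspaceD, \borelD)$ agreeing on the $\pi$-system $\mathcal{A}$ agree on $\sigma(\mathcal{A}) = \borelD$, and the marginal constraints force agreement on $\mathcal{A}$.
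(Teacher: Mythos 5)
The paper does not actually prove this theorem; it is stated in Appendix~A as a cited classical result of Bochner, so your argument can only be compared against the standard proof. Your outline is precisely that standard proof: reduce to a cofinal sequence (legitimate, since $D$ is countable and directed, and the limit along a cofinal subsequence is canonically Borel-isomorphic to $\xspaceD$), define a set function on the cylinder algebra, establish countable additivity at $\emptyset$ via inner regularity of Borel probability measures on Polish spaces plus a compactness-and-diagonalization argument, and finish with Carath\'{e}odory and a $\pi$-$\lambda$ uniqueness argument. The tightness/diagonal step, which you correctly identify as the crux, is written correctly: the estimate $\PD(D_n)\geq\varepsilon/2$, the extraction of compatible limits $y_m\in K_m$, and the assembly of $y\in\xspaceD$ all go through.

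There is, however, one genuine gap, and it matters because the theorem is stated for general projective systems rather than for product spaces: well-definedness and finite additivity of $\PD$ on the cylinder algebra are \emph{not} ``exactly the projectivity hypothesis.'' Projectivity only covers representations related by $\fI^{-1}(A)=\fJ^{-1}(\fJI^{-1}A)$. Since the canonical maps $\fI:\xspaceD\rightarrow\xspaceI$ need not be surjective, one can a priori have $\fI^{-1}(A)=\fI^{-1}(B)$ as subsets of $\xspaceD$ with $\PI(A)\neq\PI(B)$, and two cylinders can be disjoint in $\xspaceD$ without their bases being disjoint in $\xspaceI$, which breaks finite additivity. Both problems disappear only once one proves $\PI^{\ast}(\fI(\xspaceD))=1$ for every $I$, and that claim is itself a nontrivial statement requiring the same Polish-space compactness machinery you deploy later: for fixed $I$ choose, along the cofinal sequence of indices above $I$, compact sets of measure at least $1-\varepsilon$ that are nested under the bonding maps; their (nonempty, compact) projective limit sits inside $\xspaceD$ and its image in $\xspaceI$ is a compact set of measure at least $1-\varepsilon$, whence the image of $\fI$ has full outer measure. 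This must be established \emph{before} the set function on cylinders is even well defined, i.e., before the countable-additivity step where you first invoke compactness. (The concern is not vacuous for this paper: the Dirichlet process construction in Sec.~7.1 uses a non-product system indexed by partitions under refinement, where surjectivity of the canonical maps is a fact to be checked, not a given.) With that lemma inserted at the front, the rest of your argument is correct as written.
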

The image measure $\fJI(\PJ)$ is referred to as a \emph{marginal} of
$\PJ$, and whenever $\xspaceI\subset\xspaceJ$ is exactly the subspace
marginal of $\PJ$ on $\xspaceI$. 
The theorem generalizes to the case of uncountable index sets, but
then requires additional conditions to ensure $\xspaceD\neq\emptyset$.
The most commonly used condition is Bochner's ``sequential
maximality'' \cite{Bochner:1955}. Kolmogorov originally proved the
theorem for product spaces, for which sequential
maximality is automatically satisfied.

\subsection{Pullbacks of Measures and Functions}
\label{sec:pullbacks}

Projective limit constructions of stochastic processes raise
two problems: One is the effective restriction to countable index sets.
The other is that a construction from finite-dimensional
marginals can only express properties of the constructed
random functions that are verifiable at finite subsets of
points (such as non-negativity), but not
infinitary properties (such as continuity, or countable
additivity of set functions). Both problems can be addressed
simultaneously by means of a \emph{pullback},
defined via the following existence result.
\begin{lemma}[Pullback measure {\citep[][Section 132G]{Fremlin:MT}}]
  \label{lemma:pullback}
  Let $\xspace$ be a set, $(\yspace,\borely,\nu)$ be a measure space
  and $\inclusion:\xspace\rightarrow\yspace$ any function.
  If $\inclusion(\xspace)$ has full outer measure under $\nu$, that is if
  $\nu^{\ast}(\inclusion\xspace)=\nu(\yspace)$, there is a uniquely defined measure
  $\tnu$ on $(\xspace,\inclusion^{-1}\borely)$ such that
  \begin{equation}
    \label{eq:pullback}
    \tnu\circ \inclusion^{-1} = \nu\;.
  \end{equation}
\end{lemma}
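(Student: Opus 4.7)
The plan is to define $\tnu$ directly by the formula $\tnu(\inclusion^{-1}B) := \nu(B)$ for every $B \in \borely$, and then verify (i) well-definedness, (ii) countable additivity, and (iii) uniqueness. Every element of $\inclusion^{-1}\borely$ has, by definition, the form $\inclusion^{-1}B$, so this completely specifies $\tnu$; the only question is whether the specification is consistent.

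The heart of the argument, and the only place where the outer measure hypothesis is used, is step (i). Suppose $B_1,B_2\in\borely$ satisfy $\inclusion^{-1}B_1=\inclusion^{-1}B_2$. I would argue that $B_1\symmdiff B_2$ is then a measurable subset of $\yspace\setminus\inclusion(\xspace)$: a point $y\in\inclusion(\xspace)$, say $y=\inclusion(x)$, lies in $B_1$ iff $x\in\inclusion^{-1}B_1=\inclusion^{-1}B_2$ iff $y\in B_2$. Any measurable $M\subset\yspace\setminus\inclusion(\xspace)$ gives $\inclusion(\xspace)\subset\yspace\setminus M$, hence $\nu(\yspace)=\nu^{\ast}(\inclusion\xspace)\leq\nu(\yspace\setminus M)=\nu(\yspace)-\nu(M)$, forcing $\nu(M)=0$. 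Applying this with $M=B_1\symmdiff B_2$ yields $\nu(B_1)=\nu(B_2)$, so $\tnu$ is well-defined.

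For step (ii), given pairwise disjoint $\tilde{A}_i\in\inclusion^{-1}\borely$ with representatives $\tilde{A}_i=\inclusion^{-1}B_i$, I would replace $B_i$ by the disjointification $B'_i:=B_i\setminus\bigcup_{j<i}B_j$. Because $\inclusion^{-1}$ preserves set operations and the $\tilde{A}_i$ are already disjoint, one checks $\inclusion^{-1}B'_i=\tilde{A}_i$, and by (i) $\nu(B'_i)=\nu(B_i)=\tnu(\tilde{A}_i)$. Countable additivity of $\nu$ on the disjoint family $\{B'_i\}$ together with $\inclusion^{-1}\bigcup_i B'_i=\bigcup_i\tilde{A}_i$ then yields $\tnu(\bigcup_i\tilde{A}_i)=\sum_i\tnu(\tilde{A}_i)$. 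Nonnegativity and $\tnu(\emptyset)=\nu(\emptyset\cup(\yspace\setminus\inclusion\xspace))=0$ (via (i) applied to $B=\emptyset$ and $B=\yspace\setminus\inclusion\xspace$ both giving preimage $\emptyset$) are immediate, and \eqref{eq:pullback} holds by construction.

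Step (iii) is essentially automatic: if $\tnu'$ is any measure on $(\xspace,\inclusion^{-1}\borely)$ satisfying $\tnu'\circ\inclusion^{-1}=\nu$, then $\tnu'(\inclusion^{-1}B)=\nu(B)=\tnu(\inclusion^{-1}B)$ for all $B\in\borely$, and since every element of $\inclusion^{-1}\borely$ is of this form, $\tnu'=\tnu$. I expect the well-definedness step to be the only nontrivial obstacle; the other steps are routine consequences of the fact that preimages commute with Boolean operations, but the outer measure hypothesis is precisely what is needed to identify measurable symmetric differences lying outside $\inclusion(\xspace)$ as null, which is the content of the construction.
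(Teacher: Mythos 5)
Your construction and verification follow exactly the route the paper takes (the paper only sketches it and defers to Fremlin): define $\tnu(\inclusion^{-1}B):=\nu(B)$, and use the full outer measure hypothesis to show that $\inclusion^{-1}B_1=\inclusion^{-1}B_2$ forces $B_1\symmdiff B_2$ to be a measurable set disjoint from $\inclusion(\xspace)$ and hence $\nu$-null; your disjointification argument for countable additivity and the uniqueness step are routine and correct. The only tacit assumption is $\nu(\yspace)<\infty$ in the subtraction $\nu(\yspace)-\nu(M)$, but this is shared with the paper's own reading of the outer measure condition, and all applications in the paper are to probability measures.
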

The measure $\mu$ defined by \eqref{eq:pullback}
is called the {\em pullback} of $\nu$ under $\inclusion$.
If the pullback exists, $\nu$ can be represented as the image measure
$\nu=\inclusion(\tnu)$.
The outer measure condition $\nu^{\ast}(\inclusion(\xspace))=\nu(\yspace)$
ensures that the definition of $\tnu$ by means of the assignment
$\tnu(\inclusion^{-1}A):=\nu(A)$ is unambiguous: If $A,B\in\borelY$ are two sets,
$\inclusion^{-1}A=\inclusion^{-1}B$ does not imply $A=B$. Hence,
$\nu$ may assign different measures to $A$ and $B$, in
which case it is not possible to assign a consistent value to
$\inclusion^{-1}A=\inclusion^{-1}B$ under the pullback. 
However, this problem does not occur on
the image $\inclusion(\xspace)$, since $\inclusion^{-1}A=\inclusion^{-1}B$ {\em does} imply
$(A\symmdiff B)\cap \inclusion\xspace=\emptyset$.
Thus, if $\nu^{\ast}(\inclusion\xspace)=\nu(\yspace)$, any differences
between $A$ and $B$ are consistently assigned measure zero.

The arguably most important application of pullbacks of measures
is the restriction of a measure to a non-measurable subspace:
  Let $\xspace\subset\yspace$ be an arbitrary subspace, and
  $\nu$ a measure on $\yspace$. If the subspace has full
  outer measure $\nu^{\ast}(\xspace)=\nu(\yspace)$, 
  the measure $\nu$ has a uniquely defined pullback
  $\tnu$ under the canonical
  inclusion map $\xspace\hookrightarrow\yspace$.
  The measure $\nu$ lives on the the measurable
  space $(\xspace,\borely\cap\xspace)$, and assigns
  measure $\tnu(A\cap\xspace)=\nu(A)$ to each intersection
  of a measurable set $A\in\borelY$ with $\xspace$.
  Hence, $\tnu$ can be regarded as the restriction of $\nu$
  to $\xspace$.

As for measures, pullbacks can be defined for functions.
Let $\inclusion_X:\txspace\rightarrow\xspace$ and
$\inclusion_Y:\tyspace\rightarrow\yspace$ be two functions.
A \emph{pullback of a function} $f:\xspace\rightarrow\yspace$ is
any function $\tilde{f}:\txspace\rightarrow\tyspace$ for which
the following diagram commutes:
\begin{diagram}[small]
  \txspace & \rTo^{\tilde{f}} & \tyspace\\
  \dTo^{\inclusion_X} & & \dTo_{\inclusion_Y}\\
  \xspace & \rTo_{f} & \yspace
\end{diagram}
Conversely, if $\tilde{f}$ is given, any function $f$ for
which the diagram commutes is called a \emph{pushforward}
of $\tilde{f}$.

The definitions of pullbacks for measures and functions are compatible,
in the sense that the simultaneous pullback 
of a measure and an integrable
function under the same mapping preserves the integral: Let $\yspace=\tyspace=\mathbb{R}$,
and let $(X,\field,\nu)$ be a measure space such that
$\inclusion_X\txspace$ has full outer measure
$\nu^{\ast}(\inclusion_X\txspace)=\nu(\xspace)$. Let $f$ be 
$\field$-measurable, non-negative and $\nu$-integrable.
Then $\tilde{f}$ is $\inclusion_X^{-1}\field$-measurable and
$\tnu$-integrable.
Since $\nu$ is the image measure of $\tnu$ under $\inclusion_X$,
\begin{equation}
  \label{eq:pullback:preserves:integral}
  \int_{\inclusion_X^{-1}C}\tilde{f}\tnu = \int_C fd(\inclusion_X\tnu)
  =\int_C fd\nu \;.
\end{equation}

\section{Proof of Theorem 3}
\label{sec:proof:conjugacy}

\begin{proof}[Proof of (1)]
\def\wD{w_{\indD}}

Let $\kernelI$ be the probability kernel corresponding to $(\TnI)_n$ in
\eqref{eq:def:posterior:index}. Since the kernels 
$\kernelI:\borelI\times\mathcal{W}_{\indI}\rightarrow[0,1]$
live on different spaces $\mathcal{W}_{\indI}$, they do not themselves form
a projective family. To construct projective kernels $\kernelI'$, let
$\TnD:=\plim\famD{\TnI}$ be the projective limit of the posterior indices for each
$n\in\mathbb{N}$. Let $\mathcal{W}_{\indD}:=\plim\famD{\mathcal{W}_{\indI},\hJI}$.
Denote by $\mathcal{R}\subset\mathcal{W}_{\indD}$ the set of possible values of $(\TnD)_n$,
\begin{equation}
  \mathcal{R}:=\cup_n \TnD(\xspaceD^n,\yspaceD)\;.
\end{equation}
For any $\wD\in\mathcal{R}$ and $\AI\in\borelI$, 
define $\kernelI'(\AI,\wD):=\kernelI(\AI,\hI\wD)$. The
functions so defined form a family of kernels 
$\borelI\times\mathcal{R}\rightarrow[0,1]$. This family is projective:
Let $\xD^n\in\xspaceD^n$ and $\yD\in\yspaceD$. Since the posterior indices
are projective,
\begin{equation}
  \begin{split}
  \kernel_{\indJ}(\gJI^{-1}\AI,\hJ\TnD(\xD^n,\yD))
  \eqae&
  \PThetaJ[\gJI^{-1}\AI|\XJ^n=\fJ^n\xJ^n,\YJ=\hJ\yD]\\
  \eqae&
  \PThetaI[\AI|\XI^n=\fI^n,\YI=\hI\yD]\\
  \eqae&
  \kernelI(\AI,\hI\TnD(\xD^n,\yD)) \;.
  \end{split}
\end{equation}
Hence, $\kernel'_{\indJ}(\gJI^{-1}\AI,\wD)=\kernelI'(\AI,\wD)$ for
any $\wD\in\mathcal{R}$, and $\famD{\kernelI'}$
is a projective family. Let
$\kernelD':=\plim\famD{\kernelI'}$ be the projective
limit, as guaranteed by 
Theorem \ref{theorem:projlim:conditionals}.
Regarded as functions on $\abstspace$, the kernels $\kernelI'$ satisfy
\begin{equation}
  \kernelI'(\AI,\TnD(\XI^n(\omega),\YI(\omega))
  \eqae
  \PThetaI[\AI|\XI^n,\YI](\omega) \;.
\end{equation}
Since $\PThetaD[\,.\,|\XD^n,\YD]=\plim\famD{\PThetaI[\,.\,|\XI^n,\YI]}$,
uniqueness up to equivalence in Theorem \ref{theorem:projlim:conditionals}
implies
\begin{equation}
  \kernelD'(A,\TnD(\xD^n,\yD)) 
  \eqae
  \PThetaD[A|\XD^n=\xD^n,\YD=\yD] \;.
\end{equation}
Therefore, $(\TnD)_n$ is a posterior index of the projective
limit model with corresponding kernel $\kernelD'$.

Since the posterior index $(\TnD)_n$ consists of projective limits of
measurable mappings, conjugacy of all marginals implies
$\TnD(\xspaceD^n\times\yspaceD)\subset\yspaceD$ for any $n\in\mathbb{N}$, and hence
conjugacy of the projective limit model.
\end{proof}

For part (2), the existence of a posterior
index for each marginal is established by means of the following lemma.
\begin{lemma}
  \label{lemma:selectors}
  Let $\inclusionX:\txspace\rightarrow\xspace$ and
  $\inclusionY:\tyspace\rightarrow\yspace$ be continuous
  functions between Polish spaces. Suppose that a measurable
  function $\tf:\txspace\rightarrow\tyspace$ is given.
  If $\inclusionX$ is open or closed, there exists a
  pushforward of $\tf$ which is measurable.
\end{lemma}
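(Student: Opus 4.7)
The plan is to build a measurable pushforward $f: \xspace \to \yspace$ by first constructing a Borel measurable section of $\inclusionX$ on its image and then using it to factor $\inclusionY \circ \tf$ through $\inclusionX$. Set $h := \inclusionY \circ \tf$, which is Borel measurable as a composition of a continuous and a measurable map. Note that the very existence of \emph{any} pushforward requires $h$ to be constant on each fiber $\inclusionX^{-1}\{x\}$; this fiber-constancy is automatic in the intended application to the proof of Theorem \ref{theorem:conjugate:projective:limits}(2), where $\tf = \TnD$ and the projective relation provides it, so I take it as given.

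First I would observe that $\inclusionX(\txspace)$ is Borel in $\xspace$: openness of $\inclusionX$ makes the image open, while closedness makes it closed, so in either case it belongs to $\borel(\xspace)$.

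The central step is to produce a Borel measurable selector $s: \inclusionX(\txspace) \to \txspace$ with $\inclusionX \circ s = \mathrm{id}$. Consider the multifunction $F(x) := \inclusionX^{-1}\{x\}$, which takes nonempty closed values in the Polish space $\txspace$. When $\inclusionX$ is open, $F$ is lower-hemicontinuous because, for each open $U \subset \txspace$, $\{x \in \inclusionX(\txspace) : F(x) \cap U \neq \emptyset\} = \inclusionX(U)$ is open. When $\inclusionX$ is closed, the analogous measurability follows by expressing $\{x : F(x) \subset U\} = \inclusionX(\txspace) \setminus \inclusionX(\txspace \setminus U)$, which is relatively open. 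In either regime, $F$ is a Borel measurable correspondence with closed values in a Polish space, and the Kuratowski--Ryll-Nardzewski selection theorem supplies a Borel measurable $s$.

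Finally, define $f(x) := h(s(x))$ on $\inclusionX(\txspace)$ and $f(x) := y_0$ on its Borel complement, for an arbitrary fixed $y_0 \in \yspace$. Measurability of $f$ follows from the measurability of $h$ and $s$ together with the Borel partition of $\xspace$. Fiber-constancy of $h$ gives $f(\inclusionX(\tx)) = h(s(\inclusionX(\tx))) = h(\tx) = \inclusionY(\tf(\tx))$, so $f$ is indeed a pushforward of $\tf$. The main obstacle is the selection step: verifying the measurability of the correspondence $F$ in the closed-map case is delicate and requires the complementary-image formulation above, since unlike the open case one cannot simply read off openness of $\inclusionX(U)$. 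Under either hypothesis, however, a classical selection theorem closes the argument.
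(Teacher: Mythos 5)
Your proof is correct and follows essentially the same route as the paper's: both reduce the problem to producing a measurable selector of the closed-valued, nonempty correspondence $x\mapsto\inclusionX^{-1}\lbrace x\rbrace$ via the Kuratowski--Ryll-Nardzewski selection theorem, using openness (resp.\ closedness) of $\inclusionX$ to establish the required weak measurability. You additionally make explicit two points the paper leaves implicit --- the assembly of the pushforward as $\inclusionY\circ\tf\circ s$ extended by a constant off the Borel image, and the observation that fiber-constancy of $\inclusionY\circ\tf$ is needed for any pushforward to exist at all --- both of which are sound and do hold in the intended application to Theorem \ref{theorem:conjugate:projective:limits}(2).
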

The proof of the lemma 
draws on the concept of a \emph{selector} \citep[e.g.][]{Kechris:1995}. 
For a given correspondence (equivalence relation) $R$ on a product set
$A\times B$, a selector is function $\beta:A\rightarrow B$
with $f(a)\in R(a)$, \ie an assignment which transforms the
set-valued map $a\mapsto R(a)$ into a function by selecting
a single element of the set $R(a)$ for each $a$.
In our case, the correspondence of interest
is the preimage $\inclusionX^{-1}$.
A selector can be constructed for any correspondence by
invoking the axiom of choice, but will in general be too complicated
to be of any use. Under additional regularity conditions
on the correspondence and the underlying spaces, the 
selection theorem of Kuratowski and Ryll-Nardzewski 
\citep{Kechris:1995}
guarantees the existence
of a Borel-measurable selector. 
\begin{proof}[Proof of Lemma \ref{lemma:selectors}]
  By the selection theorem
  \citep[][Theorem 12.16]{Kechris:1995}, a correspondence 
  between Polish spaces admits a measurable
  selector if it is weakly measurable and its values are closed non-empty
  sets. We have to show that $\inclusionX^{-1}$ satisfies 
  these conditions. The upper inverse under the
  correspondence $\inclusionX^{-1}$ of a set $A\subset\txspace$ is by definition
  $\lbrace x\in\xspace|\inclusionX^{-1}x\subset A\rbrace$, which
  in this case is just $\inclusionX(A)$. If $\inclusionX$ is open,
  the upper inverse $\inclusionX(A)$ of any open set $A\subset\txspace$
  is in $\borel(\xspace)$, which makes
  the correspondence weakly measurable. Similarly, if $\inclusionX$ is closed,
  the upper inverse of any closed set is in $\borel(\xspace)$, hence
  $\inclusionX^{-1}$ is measurable, and in particular weakly
  measurable since $\txspace$ is Polish. The singletons are closed,
  hence by continuity, $\inclusionX^{-1}x$ is closed, and as a
  preimage non-empty.
  We note that the analogous result for pullbacks instead of pushforwards
  follows \emph{mutatis mutandis}.
\end{proof}

\begin{proof}[Proof of (2)]
  Let $\kD$ be the kernel corresponding to the posterior index
  $(\TnD)_n$ which makes the projective limit model conjugate.
  The marginals form a projective system. Hence for any $I\in D$
  and $\AI\in\borel(\tspaceI)$,
  \begin{equation}
    \kD(\gI^{-1}\AI,\hI\TnD(\xD^n,\yD))
    \eqae
    \PThetaI[\AI|\YI=\hI\TnD(\xD^n,\yD)]
  \end{equation}
  is a valid version of the posterior 
  $\PThetaI[\AI|\XI^n=\fI^n\xD^n,\YI=\hI\yD]$. Since the mappings
  are surjective, any hyperparameter $\yI$ and sample $\xI^n$ is
  representable in this form, and the marginal model is closed
  under sampling since $\hI\TnD(\xD^n,\yD)\in\yspaceI$. 
  By the same identity, any measurable mappings $(\TnI)_n$ satisfying
  \eqref{eq:theorem:conjugate:marginals:posterior:index}
  form a posterior index of the marginal model. 
  A mapping $\TnI$ satisfies
  \eqref{eq:theorem:conjugate:marginals:posterior:index}
  if it is a pushforward of $\TnD$. If the canonical mappings
  are open or closed, the existence of such measurable mappings
  $\TnI$ follows from Lemma \ref{lemma:selectors}.
\end{proof}

\section{Proofs for Section 7}
\label{sec:proofs:examples}

\begin{proof}[Proof of Lemma \ref{lemma:examples:DP:concentration:prior}]
  Since $V$ is Polish, its topology is metrizable by some metric $d$.
  The space is separable, and hence has a dense, countable subset
  $U\subset V$. Let $\mathcal{U}$
  be the set of closed $d$-balls of the form
  \begin{equation}
  \mathcal{U} := \lbrace \bar{B}(v,r) | v\in U,
  r\in\mathbb{Q}_+\rbrace \;,
  \end{equation}
  and let $\mathcal{Q}=\mathcal{Q}(\mathcal{U})$ 
  be the smallest algebra containing $\mathcal{U}$.
  Since $\mathcal{U}$ is countable, so is 
  $\mathcal{Q}(\mathcal{U})$.
  The statement of the Lemma then follows from \citep[][Theorem 1]{Orbanz:2010},
  which states that
  $\PD^{\theta,\ast}[\pMeas(\mathcal{Q}(\mathcal{U}))|\YD=(\alpha,G_0)]=1$
  holds if and only if $G_0$ is countably additive on $\mathcal{Q}(\mathcal{U})$.
\end{proof}

\begin{proof}[Proof of Lemma \ref{lemma:examples:DP:concentration:likelihood}]
  (1) 
  \emph{$\phi$ is a mapping:} 
  We have to argue that, whenever the set $W\subset V$ is a singleton, there is
  exactly one $\xD\in\xspaceD$ with $\lim\xD=W$. 
  For any $\xD=\lbrace C_{\indI}|I\in D\rbrace\in\xspaceD$, by definition,
  $\lim\xD\subset C_{\indI}$ for all $I$. Hence, every partition
  $I\in D$ contains exactly one set $\AI$ with $v\in\AI$.
  (Note that no such set need exist if $W$ is not a singleton.)
  Therefore, $\xD:=\lbrace \AI | I\in D\rbrace$ is the only element of
  $\xspaceD$ satisfying $\lim\xD=\lbrace v\rbrace=W$.\\
  \emph{$\phi$ is measurable:}
  Since the $\sigma$-algebra on $\xspaceD$ is the projective limit $\borelD$,
  $\phi$ is measurable if and only if each of the mappings $\fI\circ\phi$
  is measurable. For any $v\in V$, the image $(\fI\circ\phi)(v)=\AI$
  is the unique set $\AI\in I$ for which $v\in\AI$. The preimage of $\AI\in I$
  is therefore simply
  \begin{equation}
    (\fI\circ\phi)^{-1}\lbrace\AI\rbrace =\lbrace v\in V|v\in\AI\rbrace = \AI \;,
  \end{equation}
  and measurable since $\AI\in\mathcal{Q}\subset\borelV$.\\
  \emph{As a mapping onto its image, $\phi$ has a measurable inverse:}
  By definition, $\phi$ is trivially injective. For measurability of the
  inverse on $\phi(V)$, we have to show $\phi(A)\in\borelD\cap\phi(V)$ for
  every $A\in\borelV$, or equivalently, for every $A\in\mathcal{Q}$.
  For any $A\in\mathcal{Q}$, there is some $I\in D$ with $A\in I$, and
  hence $\lbrace A\rbrace\in\borelI$. The singleton $\lbrace A\rbrace$
  is the base of the cylinder 
  $\fJI^{-1}\lbrace A\rbrace =\lbrace \xD |\lim\xD\subset A\rbrace\in\borelD$.
  Then $\phi(A)=\fJI^{-1}\lbrace A\rbrace\cap\phi(V)$ and hence
  $\phi(A)\in\borelD\cap\phi(V)$.
  \\

  {\noindent (2)}
  Let $\thetaD$ be purely atomic of the form 
  $\thetaD=\sum_{i\in\mathbb{N}}c_i\delta_{v_i}$.
  We will show 
  \begin{equation}
    \label{eq:lemma:examples:DP:concentration:likelihood:2:toshow}
    \PD(\lbrace\xD\rbrace|\ThetaD=\thetaD)=c_i
    \qquad\Leftrightarrow\qquad
    \lim\xD=\lbrace v_i\rbrace \;.
  \end{equation}
  The right-hand side is in turn equivalent to $\xD=\phi(v_i)$.
  Given that \eqref{eq:lemma:examples:DP:concentration:likelihood:2:toshow} holds,
  the proof is complete:
  Since $\lbrace v_i \rbrace\subset V$ for all $i\in\mathbb{N}$,
  \eqref{eq:lemma:examples:DP:concentration:likelihood:2:toshow} implies
  \begin{equation}
    1=\sum_i\PD[\lbrace\phi(v_i)|\ThetaD=\thetaD]
    \leq \PD^{\ast}[V|\ThetaD=\thetaD] \leq 1\;.
  \end{equation}
  To verify \eqref{eq:lemma:examples:DP:concentration:likelihood:2:toshow},
  first observe that for any $v\in V$, there is a decreasing
  sequence of sets $Q_n\in\mathcal{Q}$ with $\lim Q_n=\lbrace v \rbrace$.
  To see this, recall the definition of $\mathcal{Q}$: 
  The algebra is generated by compact balls centered at the points in the
  subset $U\subset V$. Since $U$ is dense,
  there is a sequence $u_n\in U$ with $\lim u_n=v$ and
  $d(u_n,v)<\frac{1}{2n}$. Set $Q_n:=\bar{B}(u_n,\frac{1}{2n})$.
  Hence, $v\in Q_n$ for all $n$, and $v\in\lim u_n$. On the other hand, 
  $\bar{B}(u_n,\frac{1}{2n})\subset\bar{B}(v,\frac{1}{n})$,
  and as the balls are compact,
  $\bar{B}(v,\frac{1}{n})\searrow\lbrace v\rbrace$.
  
  Given such a sequence $(Q_n)_n$, there is a sequence
  $I_1\po I_2\po\dots$ of partitions in $D$
  such that $Q_n\in I_n$ for all $n$. In the representation $\xD=\lbrace C_{\indI}|I\in D\rbrace$,
  we therefore have $C_{\indI_n}=Q_n$. For $\xD=\phi(v_i)$,
  \begin{equation*}
  \PD[\lbrace\xD\rbrace|\ThetaD=\thetaD]
  =
  \lim_{n\rightarrow\infty}
  P_{\indI_n}[f_{\indI_n}^{-1}Q_n|\Theta_{\indI_n}=g_{\indI_n}\thetaD]
  =
  \lim_{n\rightarrow\infty}
  \thetaD(Q_n)
  =
  c_i\;.
  \end{equation*}
\end{proof}

\begin{proof}[Proof of Lemma \ref{lemma:example:cayley:projective:families}]
To show that both 
$\famD{P_n[\pi_n|\Theta_n]}$ and
$\famD{\PTheta_n[\Theta_n|Y_n]}$ are projective families of
conditional distributions, we appeal to Lemma 
\ref{lemma:cond:proj:criterion:2}.
First consider the models $P_n[\pi_n|\Theta_n]$.
For $\pi_n=\sigma_{k_1}(1)\cdots\sigma_{k_n}(n)$, the preimage
$\fnn^{-1}\pi_n$ consists of the permutations 
$\pi_{n+1}=\sigma_{k_1}(1)\cdots\sigma_{k_n}(n)\sigma_{m}(n+1)$
for $m=1,\dots,n+1$. For the sampling distributions, fix
$\theta_{n+1}\in\tspace_{n+1}$, and let 
$\theta_n=\projector_{n+1,n}\theta_{n+1}$. Then
\begin{equation}
  \label{eq:cayley:measure:projective}
  \begin{split}
    P_{n+1}[\fnn^{-1}\pi_n|\Theta_{n+1}=\theta_{n+1}] =&
    P_n[\pi_n|\Theta_n=\theta_{n}]
    \frac{(\sum_{m=1}^n e^{-\theta^{(n+1)}})+1}{1+n e^{-\theta^{(n+1)}}}
    \\
    =& P_n[\pi_n|\Theta_n=\theta_{n}]
  \end{split}
\end{equation}
Lemma \ref{lemma:cond:proj:criterion:2}
requires a product space structure of the sample space
and is thus not directly applicable on the groups $\SG{n}$.
However, the encodings $\psi_n$ map into a product space,
and we may equivalently consider the image measures
$\psi_n(P_n)$ on $\prod_{m\leq n}[m]$.
By \eqref{eq:cayley:measure:projective}, the image
measures under $\psi_n$ satisfy
\begin{equation}
  \projector_{n+1,n}\circ\psi_{n+1}(P_{n+1}[\,.\,|\Theta_{n+1}=\theta_{n+1}])
  =
  \psi_n(P_{n}[\,.\,|\Theta_{n}=\theta_{n}])
\end{equation}
which establishes \eqref{eq:lemma:criterion:2:condition}.
By Lemma \ref{lemma:cond:proj:criterion:2},
the images form a projective family 
of conditional probabilities under
the projections $\projector_{n+1,n}$, and hence
by \eqref{diagram:proj:symmetric:groups},
so do $P_n[\pi_n|\Theta_n]$ under $\fnn$.

For the priors, which are defined on the product spaces
$\mathbb{R}^{n-1}$, Lemma \ref{lemma:cond:proj:criterion:2}
can be applied directly. 
Since $Z_n=\prod_j Z^{(j)}$, the partition function
$K_n$ factorizes as $K_n(\lambda,\gamma_n)=\prod_j
K^{(j)}(\lambda,\gamma^{(j)})$. The projection
$(\projector_{n+1,n}\PTheta_{n+1})[\Theta_n|Y_n]$ therefore has density
\begin{equation*}
  \int \frac{\pTheta_n(\theta_n|\lambda,\gamma_n)
    e^{\theta^{(n+1)}\gamma^{(n+1)}-\lambda\log Z^{(n+1)}(\theta^{(n+1)})}}
          {K^{(n+1)}(\lambda,\gamma^{(n+1)})}d\theta^{(n+1)}
          =\pTheta_n(\theta_n|\lambda,\gamma_n)\;,
\end{equation*}
which establishes \eqref{eq:lemma:criterion:2:condition}.
Hence, $\PTheta_n[\Theta_n|\lambda,\gamma_n]$ is a projective
family of conditionals by Lemma \ref{lemma:cond:proj:criterion:2}.
\end{proof}

\begin{proof}[Proof of Lemma \ref{lemma:example:cayley:concentration}]
(1)
As a canonical inclusion, $\phi$ is an embedding and hence a Borel embedding.
Regarding $\inclusionT$, first note that the mapping
$q:\mathbb{R}^{\mathbb{N}}\rightarrow (0,1)^{\mathbb{N}}$ is injective and continuous,
hence measurable. Its image $\ell_1(0,1)$ is a subset of the Polish space $(0,1)^{\mathbb{N}}$,
and since convergence of a sequence in $(0,1)^{\mathbb{N}}$ is
a measurable event in the tail $\sigma$-algebra, $\ell_1(0,1)$
is Borel and hence itself Polish. As a mapping onto its image,
$q$ is surjective, and as a measurable bijection between Polish
spaces, it has a measurable inverse. Since $\inclusion_{\ell_1}$
is again a canonical inclusion, the composition $\inclusionT=\inclusion_{\ell_1}\circ q$
is a Borel embedding.\\
(2)
A virtual permutation $\pi$ is an element of $\SGinf$ if and only if 
$\sum_{j}W^{(j)}(\pi)<\infty$. If this is the case, all but a finite
number of entries of $\pi$ form their own cycle, and hence $\pi\in\SGinf$.
If the sum diverges, at least one cyclic set contains an infinite number
of elements. The random variables $W^{(j)}(\pi)$ are independent under
the model. Hence, by the Borel-Cantelli lemma, the sum converges if and
only if the sum of probabilities $\mbox{Pr}\lbrace W^{(j)}(\pi)=1\rbrace$
converges, \ie if $q(\theta)\in\ell_1$, and hence if $\theta\in\ttspace$.\\
(3)
The random variables $\ThetaD^{(j)}$ are independent
given the hyperparameters. By the zero-one law, the
event $\lbrace\ThetaD\in\ttspace\rbrace=\lbrace q(\ThetaD)\in\ell_1\rbrace$,
\ie the event that the random sequence $(\ThetaD^{(j)})_j$ diverges,
has probability either zero
or one.  The variables have expectation
$\mean{\ThetaD^{(j)}}=\gammaD^{(j)}$.  
Each component of $q$ by definition satisfies $q_j(t)\rightarrow 0$
if $t\rightarrow +\infty$. Hence, $\gammaD\in\ttspace=q^{-1}(\ell_1)$
implies $\gammaD^{(j)}\rightarrow\infty$ as $j\rightarrow\infty$.
Thus for any $\epsilon>0$, the expectations satisfy
$\mean{\ThetaD^{(j)}}>\epsilon$ for a cofinite number of indices $j$,
and $\mbox{Pr}\lbrace\ThetaD\in\ttspace\rbrace=1$.
\end{proof}

\bibliography{references.bib}
\bibliographystyle{latexinclude/natbib}

\end{document}